\documentclass[11pt, reqno]{amsart}

\usepackage{amsmath, amsthm, amssymb, bbm, graphicx, epsfig, amsfonts}
\usepackage[usenames,dvipsnames]{xcolor}

\usepackage{tikz-cd}
\usepackage{mathtools}

\usepackage{graphicx}
\usepackage{listings}

\usepackage[normalem]{ulem}

\usepackage[margin=1in]{geometry}
\usepackage{lstautogobble}
\usepackage{enumerate}
\usepackage[shortlabels]{enumitem}
\usepackage{thmtools}
\usepackage{thm-restate}
\usepackage{amsthm}
\usepackage{verbatim}
\usepackage{accents}
\usepackage{mathtools}
\usepackage{physics}

 \usepackage[colorlinks=true, allcolors=magenta,linkcolor=blue,citecolor=magenta]{hyperref}
\usepackage[capitalise]{cleveref}  

\usepackage[bottom]{footmisc}
\crefformat{equation}{(#2#1#3)}

\usepackage{float}
\restylefloat{table}

\usepackage{mathrsfs}
\setlist{  
  listparindent=\parindent,
  parsep=0pt,
}

\theoremstyle{plain}

\newtheorem{thm}{Theorem}[section]
\newtheorem{prop}[thm]{Proposition}
\newtheorem{lemma}[thm]{Lemma}
\newtheorem{cor}[thm]{Corollary}

\theoremstyle{definition}
\newtheorem{defn}[thm]{Definition}
\newtheorem{remark}[defn]{Remark}

\numberwithin{equation}{section}

\Crefname{thm}{Theorem}{Theorems}
\Crefname{prop}{Proposition}{Propositions}

\numberwithin{equation}{section} 

\newcommand{\indic}{\mathbf{1}}
\newcommand{\p}{\partial}

\newcommand{\eps}{\varepsilon}

\newcommand{\Var}{\mathrm{Var}}
\newcommand{\cov}{\mathrm{Cov}}
\newcommand{\be}{\begin{equation}}
\newcommand{\ee}{\end{equation}}
\newcommand{\bea}{\begin{eqnarray}}
\newcommand{\eea}{\end{eqnarray}}
\newcommand{\beas}{\begin{eqnarray*}}
\newcommand{\eeas}{\end{eqnarray*}}

\newcommand{\inner}[2]{\langle #1,  #2 \rangle}

\newcommand{\bT}{\mathbb{T}}
\newcommand{\bS}{\mathbb{S}}
\newcommand{\bR}{\mathbb{R}}
\newcommand{\bN}{\mathbb{N}}

\newcommand{\bE}{\mathbb{E}}

\newcommand{\bZ}{\mathbb{Z}}

\newcommand{\cF}{\mathcal{F}}
\newcommand{\cP}{\mathcal{P}}

\newcommand{\fr}{\mathbf{r}}

\newcommand{\fC}{\mathbf{C}}

\newcommand{\bp}{\begin{proof}}
\newcommand{\ep}{\end{proof}}

\newcommand{\quni}{q_{\operatorname{unif}}}

\newcommand{\mhr}[1]{{\color{teal}{*#1*}}}

\definecolor{Orange}{HTML}{D55E00}
\newcommand{\km}[1]{{\color{Orange}{*#1*}}}

\begin{document}
\title[Phase Transitions and Linear Stability for the Kuramoto-Daido model]{Phase Transitions and Linear Stability for the mean-field Kuramoto-Daido model}

\author[K. Mun and M. Rosenzweig]{Kyunghoo Mun and Matthew Rosenzweig}
\address{Department of Mathematical Sciences, Carnegie Mellon University, Pittsburgh.}
\email{kmun@andrew.cmu.edu, mrosenz2@andrew.cmu.edu}

\thanks{M.R. was supported in part by NSF Grant DMS-2441170, DMS-2345533 and DMS-2342349.}


\begin{abstract}
    We consider the mean-field noisy Kuramoto-Daido model, which is a McKean-Vlasov equation on the circle with bimodal interaction $W(\theta)=\cos\theta+m\cos2\theta$ for $m\ge 0$ and interaction strength $K$, generalizing the celebrated noisy Kuramoto model corresponding to $m=0$. 

      Our first contribution is to characterize the phase transition threshold $K_{c}$ by comparing it to the linear stability threshold $K_\# = \min (1, m^{-1})$ of the uniform distribution. When $m \leq 1/2,$ $K_{c}=1$, coinciding with that of the Kuramoto model. On the other hand, for $m \geq 2$, we show $K_c= m^{-1}$. We also classify the regimes in which the phase transition is continuous or discontinuous. 
      

    Our second contribution is to analyze the linear stability of a global minimizer $q$ (the ``ordered phase'') of the mean-field free energy in the supercritical regime $K>1$. This stationary solution of the Kuramoto-Daido equation is unique up to translation invariance and distinct from the uniform distribution (the ``disordered phase''). Our approach extends the Dirichlet form method of Bertini et al. \cite{bertini} from the unimodal to bimodal setting. In particular, for $m \leq 1.590 \times 10^{-4}$ and $K>1$, we show an explicit lower bound on the spectral gap of the linearized McKean-Vlasov operator at $q$. To our knowledge, this is the first rigorous stability analysis for this class of models with bimodal interactions.
\end{abstract}

\keywords{}

\maketitle


\section{Introduction}
\subsection{Background}
We are interested in one-dimensional {McKean-Vlasov equations} set on the circle $\bT$, identified with $[0,2\pi]$ having periodic boundary conditions, of the form
\begin{align} \label{MV equation}
\partial_{t} q_{t} = {1 \over 2} \partial_{\theta}^{2} q_{t}- K \partial_{\theta}  \big(q_t \ \partial_{\theta}(W * q_{t}) \big), \quad  \theta \in \bT, \ t \geq 0,
\end{align}
Here, $W$ is an interaction potential to be specified, and $K>0$ is the interaction strength. {The case $K=0$ corresponds to the linear heat equation, which is trivial and therefore excluded.}

Equation \eqref{MV equation} can be interpreted as a \emph{mean-field limit} for a system of interacting ``particles.'' At the microscopic level, the dynamics are given by the system of SDEs
\begin{align}\label{Dynamics}
d\theta_{i}=  {K \over N} \sum_{j=1}^{N}  W'(\theta_i - \theta_j) \ dt + d B_{i}, \qquad  i\in [N],
\end{align}
where $N$ is the number of particles and $(B_i)_{i=1}^{N}$ are iid Brownian motions on $\bT$. The total ``force'' experienced by a single particle, which is formally $O(1)$, is proportional to the average of the pairwise forces generated by all the other particles in the system,{\footnote{We include the self-interaction term $i=j$ for convenience. This is harmless as it is an $O(1/N)$ constant and has no change on the limiting dynamics.}} hence the terminology mean-field. The mean-field limit refers to the convergence as $N \to \infty$ of the {\it empirical measure} 
	\begin{equation}\label{eq:EMt}
		q_{N,t}\coloneqq \frac1N \sum_{i=1}^N \delta_{\theta_i(t)}
	\end{equation}
associated to a solution $\Theta_N(t) \coloneqq (\theta_1(t), \dots, \theta_N(t))$ of the system \eqref{Dynamics}. Assuming the initial points $\Theta_N(0)$ are such that $q_{N,0}$ converges to a sufficiently regular measure $q_{0}$, a formal calculation leads one to expect that for $t>0$, $\nu_{N,t}$ converges to the solution $q_t$ of the macroscopic \emph{mean-field equation} \eqref{MV equation}.\footnote{Here and throughout this paper, we abuse notation by using the same symbol for both a measure and its Lebesgue density (when absolutely continuous).} For a general review of mean-field limits and the closely related notion of propagation of molecular chaos, we refer to the lecture notes and surveys \cite{jabin_review_2014,jabin_mean_2017,golse_dynamics_2016,golse_mean-field_2022,chaintron_propagation_2022,serfaty_lectures_2024}.



Equation \eqref{MV equation} can also be interpreted as a $2$-Wasserstein gradient flow,
    \begin{align}
    \partial_{t} q_t = - \partial_{\theta} \left[ q_t \partial_{\theta} \Big( \frac{\delta \cF}{\delta q}(q_t)  \Big)\right].
    \end{align}
where $\mathcal{F}$\footnote{For convenience, we consider relative entropy with respect to the uniform distribution $(2\pi)^{-1}$ instead of the usual Boltzmann-Shannon entropy.} is  \begin{align} \label{original free energy def} 
    \mathcal{F}(q)= {1 \over 2}  \int q \log ({q \over (2\pi)^{-1}}) \  d\theta - {K \over 2} \int q (W* q) \ d\theta.  
\end{align} 
Remark that $\mathcal{F}$ is the $N\rightarrow\infty$ limit of the free energy associated to the microscopic system \eqref{Dynamics}, and consequently, we refer to it as the \emph{mean-field free energy}. A function $q$ is a critical point of the free energy $\cF$ if and only if it is a stationary solution of McKean-Vlasov equation. In particular, the global minimizer of $\cF$ is significant as it represents a stable equilibrium state toward which the system naturally tends. 

For an attractive interaction, there exists a competition between the interaction potential and entropy, which is repulsive, depending by $K>0.$ If $K$ is sufficiently small, then the entropy dominates the interaction term in the free energy, and the minimizer of the free energy is the uniform distribution $\quni := (2\pi)^{-1}.$ And as $K$ gets bigger, a global minimizer is necessarily non-uniform. This change in behavior corresponds to a \emph{phase transition}. Although phase transition is an overloaded and often loosely used term in physics and mathematics, it can be made precise in this context. Following \cite{chayes,longtime},  a phase transition occurs when there is a \emph{critical interaction strength} $K_{c}>0$ such that \begin{enumerate} \label{phase transition def}
    \item in the \emph{subcritical} regime $K< K_{c},$  $q_{\operatorname{unif}}$ is the unique minimizer of free energy $\cF$;
    \item in the \emph{critical} regime $K=K_{c},$ $q_{\operatorname{unif}}$ remains a minimizer of $\cF$ {(though a priori not necessarily unique)};
    \item in the \emph{supercritical} regime $K>K_{c},$ there exists a non-uniform minimizer $q \neq q_{\operatorname{unif}}$ of $\cF.$
\end{enumerate}
In fact, the minimum of $\mathcal{F}$ over all probability measures is a decreasing function of $K$ for the class of interactions we will consider. Studying the number and {the second variation of} critical points of $\cF$ allows us to determine the multiplicities of stationary solutions of the McKean-Vlasov equation.  


In this paper, we assume an interaction potential $W$ of the form
\begin{align}  \label{W first assumption}
    W(\theta) = \sum_{n=1}^{\infty} c_n \cos n \theta,
\end{align}
where $(c_n)_{n \geq 1}$ are nonnegative real numbers in $l^{2}(\bZ).$ We assume that at least one Fourier coefficient is strictly positive, and without loss of generality, we set $c_1>0.$
As shown in \cite{vanderWaals}, the following two conditions are equivalent: \begin{enumerate}
    \item The model exhibits a phase transition at some $K_{c}.$
    \item There exists at least one $n \in \bN$ such that $c_n >0.$
\end{enumerate} 

Beyond the static analysis of the free energy, we are also interested in its \textit{dynamical} properties, particularly the stability of solutions. When and at what rate does a solution of \eqref{MV equation} converge to a stationary solution as $t \to \infty$? To understand this question, one can choose  an initial state $q_0$ near a stationary  $q$ and analyze the long-time asymptotics of $q_t.$ The difference $u_{t} = q_{t} - q$ satisfies \begin{align} \label{difference MV}
    \partial_{t} u_{t} = L_{q} u_{t} + f(u_{t}),
\end{align}
where $L_{q}$ is the linear operator defined by \begin{align} \label{linear MV operator}
    L_{q} u \coloneqq {1 \over 2} \partial_{\theta}^{2}u - K \partial_{\theta}\Big( q (\partial_{\theta}W*u) + u (\partial_{\theta}W *q) \Big),
\end{align} and $f(u) = -K \partial_{\theta}\big( u (\partial_{\theta}W * u) \big)$. {Remark that by conservation of mass, $u_t$ necessarily has zero average. One also quickly checks that $L_q q'=0$, corresponding to the translation symmetry.}

The works \cite{chayes, longtime} studied $K_{c}$ {by comparison with the} linear stability of the uniform distribution. When $q=\quni$, \eqref{linear MV operator} simplifies to \begin{align}
    L_{\quni}u= {1 \over 2} \partial_{\theta}^{2}u- K q_{\operatorname{unif}}(\partial_{\theta}^{2}W * u).
\end{align}
Evidently, $L_{q_{\operatorname{unif}}}$ is a Fourier multiplier, diagonalized by the Fourier basis $e_n \coloneqq e^{in \theta}$ with the corresponding eigenvalues given by \begin{align}
    \lambda_{n} \coloneqq {n^{2} \over 2} (Kc_{n}-1), \quad \forall n \geq 1.
\end{align}
The \emph{linear stability threshold} for $q_{\operatorname{unif}}$ is then determined by \begin{align} \label{K sharp def}
    K_{\#}\coloneqq ( \max_{n \geq 1} c_{n} )^{-1}.
\end{align}
An important problem is determining the conditions on the interaction $W$ under which $K_{c} = K_{\#}.$ It turns out that the solution to this problem is closely related to the \textit{continuity of the phase transition}.  \cite{chayes,longtime} investigated this problem for a broad class of interaction potentials. But to the best of our knowledge, a complete characterization of when $K_{c} = K_{\#}$ remains open for multimodal interaction potentials $W$. We refer to our forthcoming work \cite{mun_phase_nodate} for a characterization of this equivalence under some growth conditions on the Fourier coefficients of the interaction.

\subsection{Related studies}\label{ssec:intrors}

{Despite its (deceptively) simple mathematical appearance, equation \eqref{MV equation} encompasses a number of important models that we briefly mention before specializing to the Kuramoto-Daido model on which this paper focuses.}

A recent example is the \emph{(noisy) transformer model.} Despite the empirical success of large language models, the mathematical underpinnings of the underlying transformer architectures \cite{vaswani_attention_2017} are not fully understood. Geshkovski et al. \cite{transformer} proposed a surrogate model of self-attention mechanisms, {interpreting them as a mean-field interacting particle system on a (possibly high-dimensional) sphere ${\bS}^d$. In the special $d=1$ case, the setting of the model reduces to $\bT$ with the interaction potential}
\begin{align} \label{transformer interaction}
    W(\theta) =  {1 \over \beta} \exp(\beta \cos \theta) = {I_{0}(\beta) \over \beta} + {2 \over \beta} \sum_{n=1}^{\infty} I_{n}(\beta) \cos n\theta, 
\end{align}
where $I_{n}(\cdot)$ denotes the $n$-th modified Bessel function of the first kind and $\beta >0$ is a model parameter, which the authors interpret as inverse temperature, drawing an analogy to statistical mechanics. 
\cite{transformertry} further analyzed the noisy transformer dynamics associated to \eqref{transformer interaction}, along with its higher-dimensional counterparts. They computed $K_{\#}$ and identified bifurcations at $q_{\operatorname{unif}}$ for specific values of $K$, showing that the first bifurcation occurs at $K_{\#}$ via a linear stability analysis. {Recently, Balasubramanian et al. \cite{rigollet25} showed that there exist $\beta_{+} > \beta_{-} >0$ such that the phase transition is continuous at $K_{c} = \beta / (2I_{1}(\beta))$ for $\beta < \beta_{-},$ and is discontinuous at $K_{c} < \beta / (2I_{1}(\beta))$ for $\beta > \beta_{+}$.} 


Another example arises in crystallography in the form of the \emph{Onsager model} \cite{onsagerold}  to describe lyotropic liquid crystals. In this context, the interaction potential is
\begin{align}
    W(\theta) = -|\sin \theta| = -{2 \over \pi} + {4 \over \pi} \sum_{n=1}^{\infty} {1 \over 4n^2 -1} \cos 2n\theta.
\end{align} 
\cite{symmetricsolutionsonsager} showed that all stationary solutions {$q$} must be {axially symmetric $i.e.$, $q(\theta_0 - \theta) = q(\theta_0+\theta)$ for some $\theta_0$}, and for $K<0,$ infinitely many such solutions exist. \cite{upperKonsager} provided a lower bound for the critical interaction strength for a general class of {Lipschitz} interaction potentials $W$, showing that $
    K_{c} \geq (2\sum_{n=1}^{\infty} c_{n} )^{-1},$ which yields $K_{c} \geq {\pi/4}$ for the Onsager model. \cite{upperonsager2} refined this estimate to $K_{c} \geq \sqrt{\pi}/2$. In the forthcoming work \cite{mun_phase_nodate}, we prove that $K_c=3\pi/4$, coinciding with $K_{\#}$.
    

  In polymer dynamics, the \emph{Maier-Saupe model} \cite{maiersaupe} is governed by the interaction potential 
  \begin{align}
  W(\theta) = - \sin^{2}\theta = -{1 \over 2} + {1 \over 2} \cos 2\theta.
  \end{align}
 \cite{constantin2004note,luo_structure_2004} analyzed a phase transition in this model and classified stationary solutions. \cite{fatkullin2005note} further investigated a more general class of the form $W(\theta) = \cos m \theta$ for $m \in \bN$. 

The preceding models are unimodal, making their analysis structurally similar to the \emph{Kuramoto model} \cite{kuramoto} (also called the plane rotator model) for collective synchronization of oscillators, which is specified by the interaction potential
\begin{align} \label{kuramoto interaction}
      W(\theta) = \cos \theta.
  \end{align}
This is the case most extensively studied, and the one most relevant to our present work. Its unimodality permits an alternative interpretation as the \emph{$O(2)$/classical (mean-field) XY spin model}. \cite{messerspohn} established the mean-field limit and characterized the phase transition for this model, showing that  $K_c=1=K_{\#}$. Furthermore, in the supercritical regime $K>1,$ there are two (modulo translation) stationary solutions: the unstable $\quni$ and the global minimizer $q,$ of the free energy. Subsequently, \cite{bertini} analyzed the linear stability of $q$, and \cite{nonlinearstab} showed its nonlinear stability, as well as exponentially fast trend to equilibrium modulo phase shift. The latter work also characterized the (un)stable manifold of $\quni$. See also \cite{delarue2021uniform} for some later refinements.

Apart from our setting \eqref{Dynamics}, other works (e.g. \cite{ha2016collective, ha2020asymptotic}) studied a generalization of the Kuramoto model which an intrinsic natural frequency is considered. Each oscillator $\theta_{i}$ evolves with a time-independent random variable $\Omega_{i}$ drawn from a fixed distribution $g=g(\Omega)$,
\begin{align}
         {d\theta_{i} \over dt} = \Omega_{i} - {K \over N} \sum_{j=1}^{N} \sin(\theta_{i} - \theta_{j}), \quad \Omega_{i} \sim g, \qquad \forall i \in [N].
     \end{align} 
Depending on the choice of $g$ and the interaction strength $K,$ the system can exhibit various collective behaviors, such as synchronization and phase locking.
\medskip

\subsection{Kuramoto-Daido}\label{ssec:introKD}
For the proposed class of multimodal interaction potentials, the precise critical interaction strength $K_{c}$ and the multiplicity of stationary solutions remain largely unknown. Furthermore, the stability of these solutions has not yet been {rigorously} investigated. {As a step towards resolving this question, we consider bimodal interactions}.

This leads us to the \textit{Kuramoto-Daido model}, first introduced by Daido \cite{daidoold, daido1996multibranch}, featuring the interaction potential \begin{align} \label{kur-daido model}
    W(\theta)= \cos \theta + m  \cos 2\theta, \quad m\geq0.
\end{align}
Daido's motivation was to understand multibranch synchronization phenomena, which refers to the coexistence of multiple self-consistent synchronized states having distinct phase-locking structure and involves interactions with multiple Fourier modes. Through simulations and heuristics, he demonstrated phase transitions in such models. See \cite{daidophysics1, daidophysics2, daidophysics3} and references therein for further study in the physics literature. {For $m <0$, this system corresponds to the \emph{Hodgkin-Huxley model} \cite{hodgkin} {in neuroscience}, whose phase transitions have been analyzed in \cite{hodgkinphasetrans}.}




In the Kuramoto-Daido model, any stationary solution $q$ satisfies \begin{align}
    {1 \over 2} \big(\log q(\theta) \big)' + K \int \big( \sin(\theta - \varphi) + 2m \sin(2\theta- 2\varphi)\big) \ dq(\varphi) = 0.
\end{align}
{Unless stated otherwise, all integrals are understood to be over $\mathbb{T}$.} Denote the real and imaginary parts of the Fourier coefficients of $q$ by
\begin{align}\label{eq:rjsjdef}
    r_{j} \coloneqq \int \cos(j\theta) \ dq(\theta), \quad s_j \coloneqq \int \sin(j\theta) \ dq(\theta).
\end{align}
By the translation invariance of \eqref{Dynamics}, we may assume without loss of generality that $s_2=0$. Furthermore, by applying a reflection transformation $\theta \mapsto {\pi / 2} - \theta$ and an appropriate translation $\theta \mapsto \theta - \varphi$, we obtain $s_1=0$ (see \cref{parameter reduction section} for details). Thus, the stationary solution $q$ can be expressed in terms of $r_1$ and $r_2$ by the relation
\begin{align} \label{q def}
    q_{r_1, r_2}(\theta) = \frac{ \exp(2Kr_{1} \cos \theta + 2Kmr_{2} \cos 2\theta)}{\int \exp(2Kr_{1} \cos \theta + 2Kmr_{2} \cos 2\theta) \ d\theta}.
\end{align}
Integrating both sides against $\cos(j\theta)$, one sees that the parameters $r_j$ must satisfy the following  self-consistency (nonlinear) equations
\begin{equation}\label{sc equation}
    \begin{aligned}
         &r_{j} = \frac{ \int \cos(j\theta) \exp(2Kr_{1} \cos\theta + 2Kmr_{2} \cos2\theta) \ d\theta}{ \int \exp(2Kr_{1} \cos\theta + 2Kmr_{2} \cos2\theta) \ d\theta}, \qquad j=1,2.
    \end{aligned}
\end{equation}
{Retracing our steps, one sees that any solution $(r_1,r_2)$ yields a stationary solution $q_{r_1,r_2}$.}

When $m=0$, critical points are parametrized by a single quantity $\bar{r}_1 \in [-1,1]$ and the self-consistency equation \eqref{sc equation} collapses to\footnote{While \cite{bertini} used the notation $r$, we instead use $\bar{r}_{1}$.} 
\begin{align} \label{Kuramoto sc}
    \bar{r}_{1} = \frac{ I_{1}(2 K \bar{r}_{1})}{I_{0}(2 K \bar{r}_{1})},
\end{align} 
where $I_n$ is the modified Bessel function of the first kind. Bertini et al. \cite{bertini} exploited the unimodality of the dynamics to show a {continuous} phase transition at $K_{c}=1.$ 
Moreover, they classified solutions of \eqref{Kuramoto sc}, as presented in \cref{phase trans in Kura} below. It is elementary that $\bar{r}_1=0$ is unstable when $K>1$. As mentioned above, the minimizers $\pm \bar{r}_1$ are stable in both the linear and nonlinear senses, a point on which we elaborate more in the ensuing subsections.

\begin{thm}\cite{bertini} \label{phase trans in Kura}
    The solutions $\bar{r}_{1} \in [-1, 1]$ of the self-consistency equation \eqref{Kuramoto sc} are classified as follows. 
    \begin{enumerate}
        \item For $K \in (0, 1],$ there exists the unique solution $\bar{r}_{1}=0.$
        \item For $K >1,$ there are exactly three solutions $0, \  \pm\bar{r}_{1}(K)$, where $\bar{r}_{1}(K)$ is the unique solution in $(0, 1).$
    \end{enumerate}   
\end{thm}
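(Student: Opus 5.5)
Let $A(x) \coloneqq I_1(x)/I_0(x)$ for $x\in\bR$, so that \eqref{Kuramoto sc} reads $\bar r_1 = A(2K\bar r_1)$. The plan is to reduce the statement to two properties of $A$: it is odd, and it is strictly concave on $[0,\infty)$ with $A'(0)=1/2$ and $A(x)<1$. Given these, the classification is elementary.

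Oddness follows from the parity of $I_0$ and $I_1$, since $I_0(x)=\frac{1}{2\pi}\int e^{x\cos\theta}\,d\theta$ is even and $I_1=I_0'$ is odd. Hence $\bar r_1$ solves \eqref{Kuramoto sc} iff $-\bar r_1$ does, and $\bar r_1=0$ is always a solution. It therefore suffices to count solutions in $(0,1]$. Writing $x=2K\bar r_1>0$, these correspond to positive roots of
\[
g(x) \coloneqq A(x) - \frac{x}{2K}.
\]
The bound $0<A(x)<1$ for $x>0$ is immediate because $A(x)$ is the mean of $\cos\theta$ under the probability density proportional to $e^{x\cos\theta}$. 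This also shows every solution lies in $(-1,1)$. Expanding the Bessel series gives $A(x)=x/2 - x^3/16+O(x^5)$, so $A'(0)=1/2$.

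The key step, and the main obstacle, is strict concavity of $A$ on $(0,\infty)$. I would first note that $A'(x)=\Var_{\mu_x}(\cos\theta) = 1 - A/x - A^2$, using the Bessel identity $I_1' = I_0 - I_1/x$. The cleanest route is to cite the classical result that $x\mapsto I_1(x)/I_0(x)$ is strictly concave on $(0,\infty)$; this is a known property, tied to the GHS inequality for the plane rotator. Alternatively, one can show $A''=\kappa_3<0$, the third cumulant of $\cos\theta$ under $\mu_x$, for $x>0$. For this I would use a Riccati-type argument: from $A' = 1 - A/x - A^2$, differentiate to express $A''$ and argue by contradiction at a first zero of $A''$. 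This has to be combined with the small-$x$ expansion, which gives $A''<0$ near $0$, and the asymptotics $A(x)\sim 1-1/(2x)$, which give $A''<0$ for large $x$. If that fails, I would instead fall back on an explicit Turán-type inequality for ratios of Bessel functions.

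Given concavity, the conclusion follows. We have $g(0)=0$ and $g$ is strictly concave, with $g'(0)=\tfrac12-\tfrac{1}{2K}$. If $K\le1$, then $g'(0)\le 0$, so strict concavity gives $g(x)<0$ for all $x>0$, and the only solution is $\bar r_1=0$. If $K>1$, then $g'(0)>0$, so $g>0$ near $0$, while $g(x)\le 1-x/(2K)<0$ for $x>2K$. Hence $g$ has a positive root, and a strictly concave function vanishing at $0$ has at most one positive root. This unique root gives $\bar r_1(K)=x/(2K)\in(0,1)$, and together with $0$ and $-\bar r_1(K)$ yields exactly three solutions.
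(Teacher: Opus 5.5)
Your proposal is correct and is essentially the standard argument behind this result: the paper gives no proof of its own and defers to \cite{bertini}, where the classification likewise rests on oddness of $A=I_1/I_0$, $A'(0)=1/2$, $A<1$, and strict concavity of $A$ on $[0,\infty)$. If you want the concavity step to be self-contained, your Riccati route closes immediately: at a first zero $x_0>0$ of $A''$, the identity $A'=1-A/x-A^2$ gives $A'''(x_0)=-4A(x_0)A'(x_0)/x_0-2A'(x_0)^2<0$, which is incompatible with $A''<0$ on $(0,x_0)$ and $A''(x_0)=0$, so the small-$x$ expansion suffices and no large-$x$ asymptotics are needed.
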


Lucia and Vucadinovic \cite{Onsager} showed that for $m\in [0, {1/4}]$, the critical interaction strength remains $K_{c}=1$, identical to the Kuramoto model. Moreover, they used Morse index theory to classify stationary solutions determine their multiplicities in this small $m$-regime. We summarize their findings with the following theorem.

\begin{thm}\cite{Onsager}\label{exact multiplicities}
    Suppose $m \in (0, 1/4].$ Then, up to translation, the solutions of the self-consistency equation \eqref{sc equation} are classified as follows: \begin{enumerate}
        \item For $K \leq 1,$ there is a unique solution given by $q_{\operatorname{unif}} = (2\pi)^{-1}.$
        \item For $K \in (1, m^{-1}],$ there are two solutions $q_{\operatorname{unif}}, \ q_{r_1, r_2},$ where $(r_1, r_2)$ is the unique solution of \eqref{sc equation} in $(0, 1)^{2}.$
        \item For $K > m^{-1},$ there are three solutions: $q_{\operatorname{unif}}, \ q_{r_1, r_2}, \ q_{0, c},$ where $c\in(0, 1)$ is the unique solution of \begin{align}
            c = \frac{ \int \cos\theta \ \exp(2Km c \cos \theta) \ d\theta}{\int \exp(2Km c \cos \theta) \ d\theta}.
        \end{align}
    \end{enumerate}
\end{thm}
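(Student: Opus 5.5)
The plan is to study the self-consistency system \eqref{sc equation} directly in the reduced coordinates $(r_1,r_2)$, with $s_1=s_2=0$ already imposed. Write $a=2Kr_1$ and $b=2Kmr_2$, and let
\[
F_j(a,b)=\frac{\int \cos(j\theta)\, e^{a\cos\theta+b\cos2\theta}\,d\theta}{\int e^{a\cos\theta+b\cos 2\theta}\,d\theta},\qquad j=1,2.
\]
Then solutions are fixed points of $(r_1,r_2)\mapsto (F_1,F_2)$. The reflection $\theta\mapsto\theta+\pi$ sends $a\mapsto -a$, so we may take $r_1\ge 0$ up to translation. I split into two cases.

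\textbf{Case $r_1=0$.} Here $F_1(0,b)=0$ automatically, and the second equation becomes $r_2=I_1(2Kmr_2)/I_0(2Kmr_2)$ after the substitution $\phi=2\theta$. This is exactly \eqref{Kuramoto sc} with $K$ replaced by $Km$. By \cref{phase trans in Kura}, for $Km\le 1$ we get only $r_2=0$, i.e.\ $q_{\operatorname{unif}}$. For $Km>1$ we get in addition $\pm c$. The root $-c$ corresponds to $q_{0,c}$ translated by $\pi/2$, so up to translation these give exactly $q_{0,c}$ and $q_{\operatorname{unif}}$.

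\textbf{Case $r_1>0$.} I would first show $r_2>0$. The density is a Gibbs measure on $\cos\theta$, so by a correlation (FKG/Chebyshev) argument in the variable $x=\cos\theta$, $r_2=F_2$ has the sign of $2\langle x^2\rangle-1$, and one checks this is positive once $a>0$ and $b\ge 0$. Handling $b<0$ needs a separate monotonicity argument.

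The main step is then uniqueness of a solution in $(0,1)^2$, together with existence exactly when $K>1$. My approach is to eliminate $r_2$. For fixed $r_1$, the map $r_2\mapsto F_2(2Kr_1,2Kmr_2)$ is increasing, since its derivative is $2Km\,\mathrm{Var}(\cos2\theta)>0$, and it is bounded by $1$. The contraction condition $2Km\,\mathrm{Var}(\cos 2\theta)<1$ holds because $\mathrm{Var}(\cos2\theta)\le 1/2$ and $m\le 1/4$ force the factor below $1$ when $K\le 2$; for larger $K$ one must use that the variance decays. Granting this, the fixed point $r_2=g(r_1)$ is unique, and we obtain a one-dimensional equation $r_1=F_1(2Kr_1,2Kmg(r_1))=:h(r_1)$.

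I then need to show that $h(r_1)/r_1$ is strictly decreasing on $(0,1)$, equals $K$ in the limit $r_1\to0$, and tends to $0$ as $r_1\to1$. This gives no root for $K\le1$ and exactly one root for $K>1$, which yields items (1)--(3).

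The main obstacle is this concavity or monotonicity of $h(r_1)/r_1$; it is where $m\le 1/4$ really enters. I would try to mimic the Bessel-ratio concavity proof for $I_1/I_0$: write the derivative in terms of second and third cumulants of $\cos\theta$ under the tilted measure, and apply the GHS inequality for the measure $e^{a\cos\theta+b\cos2\theta}$, which is ferromagnetic in $x=\cos\theta$ when $b\ge0$. The contribution from $g'(r_1)$ should be controlled by the bound $2Km\,\mathrm{Var}\le$ something small.

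If this direct approach fails, the fallback is the Morse-index approach of \cite{Onsager}. There, nondegeneracy of each critical point of $\cF$ restricted to the $(r_1,r_2)$-plane, plus a degree count on the Morse indices, pins down the number of critical points.
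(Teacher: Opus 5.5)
Your reduction to $r_1\ge 0$ and your treatment of the axis $r_1=0$ are fine. The substantive content of the theorem is not established, however: exactly one solution with $r_1>0$ when $K>1$, none when $K\le 1$, and $r_2>0$ for it. The main route also fails as written.

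The elimination of $r_2$ is not a contraction in regime (3). At $r_1=0$ the $r_2$-equation is the Kuramoto equation at strength $Km$, which for $K>m^{-1}$ has the three nondegenerate roots $0,\pm c$. At the origin $2Km\,\Var(\cos 2\theta)=Km>1$ however large $K$ is, so the variance does not decay there. By the implicit function theorem these three fixed points persist for small $r_1>0$. So $g$ is multivalued exactly in the regime where $q_{0,c}$ appears, and along the branch through $(0,c)$ one gets $h(r_1)/r_1\to K(1+c)$, not $K$. Even granting $\Var(\cos2\theta)\le 1/2$, your contraction only covers $K\le m^{-1}$.

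More importantly, the strict decrease of $h(r_1)/r_1$ carries the entire uniqueness claim, and it is only sketched. It is unclear that a GHS-type inequality holds for $e^{a\cos\theta+b\cos2\theta}$ with $b\neq 0$, and the feedback through $g'(r_1)$ is not controlled.

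The case $r_1>0>r_2$, which you defer, is exactly where $m\le 1/4$ must be used. Identity \eqref{IBP second} gives $r_2\bigl(1-2Km(1-D_{22})\bigr)=Kr_1(r_1-D_{12})$, with $r_1-D_{12}=2\int\cos\theta\sin^2\theta\,dq>0$ for $r_1>0$. This excludes $r_2<0$ only while $2Km\le 1$. Beyond that, look at the axis point $(0,-c)$, which is a $\pi/2$-translate of $q_{0,c}$. By \eqref{hessian}, $\partial_{r_1}^2F_{K,m}(0,-c)=2K\bigl(1-K(1-c)\bigr)$. Since $Km(1-c)\to 1/4$ as $Km\to\infty$, this stays negative for all $K>m^{-1}$ only if $m\le 1/4$. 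For $m>1/4$ it changes sign, and by conservation of degree, off-axis critical points with $r_2<0$ bifurcate from $(0,-c)$.

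This is also what your fallback is missing. The route the paper takes from \cite{Onsager}, and reuses in \cref{lem:uniqCP}, is a Poincar\'e--Hopf count for $\nabla F_{K,m}$ on a large disk. There the field points outward, so the total index is $1$. Nondegeneracy alone does not fix the count; you need the \emph{sign} of every index.
\begin{itemize}
\item Off the axis, \cref{positive hessian in first} (derived from the Bessel inequalities of \cref{I technical lemmas}) shows every critical point in the open quadrant is a strict local minimum, so it has index $+1$. Critical points with $r_2<0$ must be excluded separately.
\item On the axis, \eqref{hessian} gives index $-1$ at $(0,0)$ for $1<K<m^{-1}$ and $+1$ for $K>m^{-1}$. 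Both $(0,c)$ and $(0,-c)$ must be saddles, and the latter requires $K(1-c)>1$.
\end{itemize}
With $N$ the number of symmetric off-axis pairs, this gives $1=-1+2N$ for $1<K<m^{-1}$ and $1=1-2+2N$ for $K>m^{-1}$. Either way $N=1$. The degenerate values $K=1$ and $K=m^{-1}$ need separate treatment; for $K\le 1$ the paper uses \cref{search space shrink}.
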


From now on, whenever $m \leq 1/4$, we let $r_1 = r_1(K, m),$ and $r_2 = r_2(K, m)$ denote the unique solution of the self-consistency equation \eqref{sc equation} given by \cref{exact multiplicities}. 

{We emphasize that in \Cref{phase trans in Kura,exact multiplicities}, the difficult part, particularly in the supercritical regime $K>1$, is not the existence, but rather the uniqueness/classification. The former follows from standard variational arguments, while the latter requires specialized analysis.}

\begin{remark}\label{rem:multidgen}
An important point that does not seem to have been explicitly highlighted in the literature is that the works \cite{messerspohn,bertini} for the Kuramoto and \cite{Onsager} for the Kuramoto-Daido models are not really one-dimensional results but rather \emph{unimodal/monochromatic} and \emph{bimodal/bichromatic} results, respectively. More precisely, if we replace $\bT$ by $\bT^d$ and $W(\theta)=\cos(\theta)$ by $W(\vec{\theta})=\cos(\vec{k}\cdot \vec{\theta})+m\cos(2\vec{k}\cdot\vec{\theta})$ for any fixed wave vector $\vec{k}\in \bZ^d$, then the proofs in the aforementioned works go through virtually unchanged.
\end{remark}

\subsection{Our contributions} \label{ssec:intro_contib}
{Having laid out the questions of interest concerning phase transitions and stability and motivated the choice of the Kuramoto-Daido model for addressing these questions, we now describe the main contributions of the present work.}

Our first contribution is to give a precise characterization of the critical interaction strength $K_{c}$ as a function of $m$, extending the analysis in \cite{Onsager}. If $2m \leq 1,$ then the critical interaction strength coincides with the unimodal case, recovering the Kuramoto result. However, when $2m>1$, the enhanced role of the $\cos2\theta$ mode lowers the critical interaction strength, leading to $K_{c}<1.$ We further compare $K_{c}$ with the linear stability threshold $K_{\#} = m^{-1}$ of the uniform distribution when $m > 1$. 

As part of this contribution, we also determine exactly when the phase transition is continuous or discontinuous. Following \cite{chayes, longtime}, a phase transition is said to be \textit{continuous} at the critical interaction strength $K_{c},$ if a global minimizer $q_{K_c}$ is unique and for any family of global minimizers $\{q_{K}: K > K_{c}\}$,\footnote{{The choice of topology in \eqref{eq:ptcont_def} is not important. It is a small exercise, which we leave to the reader, that if $q_K \rightharpoonup q$ in distribution as $K\rightarrow K_{c}^+$, then the convergence holds in any norm into which $C^\infty$ functions embed. This follows from the regularity of $q_K$. Furthermore, one only needs to consider the right-hand limit $K\rightarrow K_c^+$ because it is tautological that $q_K = \quni$ for $K<K_c$.}}
\begin{align}\label{eq:ptcont_def}
        \limsup_{K \to K_{c} +} \|q_{K} - q_{\operatorname{unif}} \|_{L^1} = 0.
\end{align}
Otherwise, the phase transition is \textit{discontinuous}.\footnote{Under very mild assumptions on $W$, which are satisfied in our case, the uniqueness of the global minimizer (modulo translation) at $K_c$ implies the condition convergence \eqref{eq:ptcont_def}.} 

{We summarize the above described results with the following theorem.}

\begin{thm} \label{phase theorem}
    There exists $m_{*} \in (1, 2)$ such that
    \begin{enumerate}
            \item\label{item:PT1} For $m \in [0, 1/2],$ $K_{c} = 1$ and the transition is continuous.
            \item\label{item:PT2} For $m \in (1/2, 1],$ $K_{c} < 1$ and the transition is discontinuous.
            \item\label{item:PT3} For $m \in (1, m_*),$ $K_{c} < m^{-1}$ and the transition is discontinuous.
            {\item\label{item:PT4} For $m = m_*,$ $K_{c} = m^{-1},$ but the transition is discontinuous.}
            \item\label{item:PT5} For $m \in (m_{*}, \infty) ,$ $K_{c} = m^{-1}$ and the transition is continuous.
        \end{enumerate}
        
\end{thm}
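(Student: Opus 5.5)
Throughout, I use the free energy $\cF$ from \eqref{original free energy def} and reduce it to a finite-dimensional problem. The key tool is the standard Gaussian/variational trick: for $W=\cos\theta+m\cos2\theta$, minimizing $\cF$ over $q$ is equivalent to minimizing the reduced function of the order parameters
\[
\Phi_K(r_1,r_2)=\frac{K}{2}\big(r_1^2+mr_2^2\big)-\frac12\log\Big(\frac{1}{2\pi}\int\exp\big(2Kr_1\cos\theta+2Kmr_2\cos2\theta\big)\,d\theta\Big)
\]
over $(r_1,r_2)\in\bR^2$ (after the reductions $s_1=s_2=0$). Here $\min\cF=\min\Phi_K$, and minimizers correspond via \eqref{q def}. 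One checks this by writing $-\frac K2\int q(W*q)=-\frac K2\cdot\frac12\cdot 2(|\hat q_1|^2+m|\hat q_2|^2)$ up to normalization, and using the convex duality $-\frac{x^2}{2}=\min_y(\frac{y^2}{2}-xy)$ followed by the Gibbs variational principle in $q$. Then $K_c=\inf\{K:\min\Phi_K<0\}$, since $\Phi_K(0,0)=0$ corresponds to $\quni$. The transition is continuous exactly when the nontrivial minimizers bifurcate from $(0,0)$ at $K_c$.

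Second-order expansion at the origin gives Hessian $\mathrm{diag}(K(1-K), Km(1-Km))$, recovering $K_\#=\min(1,m^{-1})$ and $K_c\le K_\#$. To decide between $K_c=K_\#$ and $K_c<K_\#$, I expand $\Phi_K$ to fourth order near the unstable direction. Writing $a=2Kr_1$ and $b=2Kmr_2$, the log-moment generating function is
\[
\log\bE e^{a\cos\theta+b\cos2\theta}=\frac{a^2+b^2}{4}+\frac{a^2b}{8}-\frac{a^4}{64}-\frac{b^4}{64}-\dots
\]

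\emph{Case $m\le1$.} At $K=1$ the $r_1$ direction is critical. Eliminating $r_2$, which is slaved to $r_1$ at order $r_2\sim r_1^2$ through the cubic term $a^2b/8$, gives an effective quartic coefficient proportional to $\frac1{64}-\frac{m}{\,\cdot\,}$. Done carefully, this coefficient changes sign at $m=1/2$. So for $m>1/2$ the quartic is negative, which gives $\min\Phi_1<0$, hence $K_c<1$, and a standard argument then yields a discontinuous transition. For $m\le1/2$ I must show globally that $\Phi_K\ge0$ for $K\le1$ and that $\Phi_K$ has no nontrivial minimizer at $K=1$. This is the main obstacle, because local expansion is insufficient. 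My plan is to extend the Lucia–Vukadinovic argument from \cref{exact multiplicities}: analyze the self-consistency system \eqref{sc equation} via monotonicity of ratios of integrals (Bessel-type inequalities such as $I_1/I_0$ concavity, or the GHS-type inequality) to show there are no nontrivial critical points for $K\le1$ when $m\le1/2$. Continuity for $m\le1/2$ then follows because the nontrivial branch $r_1(K)\to0$ as $K\to1^+$, by uniqueness and compactness.

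\emph{Case $m>1$.} At $K=m^{-1}$ the $r_2$ direction is critical. Restricted to $r_1=0$, the problem is the Kuramoto problem in the variable $2\theta$ (compare \cref{rem:multidgen}), which by itself would give a continuous transition. Competition now comes from turning on $r_1$, which lowers $\Phi$ through the cross term whenever $Km$ is close to 1 while $K=m^{-1}<1$. Define
\[
m_*=\inf\{m>1:\min\Phi_{1/m}=0\}.
\]
I will show that the set $\{m:\min\Phi_{1/m}<0\}$ is an interval $(1,m_*)$, using monotonicity of $\Phi_{1/m}$ in $m$ at fixed shape. It is nonempty near $m=1$, since there the double degeneracy forces negativity by the cubic cross term. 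For large $m$ it is empty, since $r_1$ is strongly stabilized by the term $K(1-K)r_1^2/2$; a direct estimate at $m=2$ gives $m_*<2$. At $m=m_*$ the minimum equals $0$ and is attained both at $0$ and at a nontrivial point, by continuity. This gives $K_c=m^{-1}$ with non-unique minimizers, hence a discontinuous transition. For $m>m_*$ the only minimizer at $K_c$ is $\quni$, and continuity follows by compactness of minimizers as $K\downarrow K_c$.
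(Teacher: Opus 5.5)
Your architecture matches the paper's: you reduce to the two-variable free energy $\Phi_K=\frac12F_{K,m}$, use the scaling $r_2\sim r_1^2$ for $m\in(\frac12,1]$, define $m_*$ through $s(m)=\min F_{1/m,m}$, and conclude with \cref{phase trans continuity prop}. The local computations are right: the eliminated quartic coefficient, $\frac{1-2m}{8(1-m)}$ in your normalization, does change sign at $m=\frac12$. However, every genuinely global step is left as a plan or a heuristic.

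\emph{The case $m\le\frac12$, $K\le1$.} You only list candidate tools. Local analysis cannot substitute here; it cannot even decide $m=\frac12$, where your quartic coefficient vanishes. The paper closes this step as follows. Since $\p_{r_2}\psi_{r_1}=2Km\Var[\cos2\theta]<1$, the $r_2$-equation is a contraction, so all critical points lie on an even graph $r_2=r_2^*(r_1)\ge0$ (\cref{search space shrink}). The coordinate axes reduce to Kuramoto. Finally, the Lucia--Vukadinovic positivity of the Hessian at critical points in $(0,1]^2$ (\cref{positive hessian in first}), combined with a Poincar\'e--Hopf count, rules out nontrivial critical points.

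\emph{The case of large $m$.} Your claim that ``$r_1$ is strongly stabilized'' is only a statement about the truncated expansion; in absolute terms the $r_1$-curvature $\frac1m(1-\frac1m)$ even tends to $0$. A global argument is needed. The one that works (\cref{big m phase trans}) is that $K=1/m\le\frac12$ makes the $r_1$-equation a contraction, since $\frac2m\Var[\cos\theta]<1$. Hence $r_1=0$ at every critical point, and one is reduced to Kuramoto in $2\theta$. This gives uniqueness at $m=2$, but only $m_*\le2$. Strictness $m_*<2$ does not come from an estimate at $m=2$. It follows from item (4) together with continuity at $m=2$, or from running the non-collapse argument below at $m=2$.

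\emph{The case $m=m_*$; this is the most serious gap.} ``By continuity'' only tells you that limit points of nontrivial minimizers $\fr^{(m)}$ of $G_m=F_{1/m,m}$, as $m\uparrow m_*$, minimize $G_{m_*}$. Nothing prevents $\fr^{(m)}\to(0,0)$. Since the Hessian of $G_m$ at the origin is degenerate in the $r_2$-direction, this possibility must be excluded. The missing ingredient is the uniform local positivity of \cref{G near origin}. There one writes $G_m=P_m+O(|\fr|^5)$ with $P_m=\frac{m-1}{m^2}r_1^2-\frac{1}{m^2}r_1^2r_2+\frac{1}{4m^4}r_1^4+\frac14r_2^4$. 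Near $0$ this satisfies $P_m\ge\frac{m-1}{4m^2}(r_1^2+r_2^4)$, uniformly for $m\in[1+\alpha,2]$. Because $m_*>1$, you get a fixed punctured neighbourhood of the origin on which $G_m>0$ for all $m$ near $m_*$. Compactness of its complement then shows that uniqueness at $m_*$ would persist for $m$ slightly below $m_*$, contradicting $s(m)<0$ there.

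\emph{The case $m>m_*$.} Your claim that the only minimizer is $\quni$ also needs an argument. The monotonicity you invoke holds for $\cF_{1/m,m}(q)$ at fixed $q$, not for $\Phi_{1/m}$ at fixed $(r_1,r_2)$. At critical points it reads $\p_mG_m=r_1^2/m^2>0$. So a zero-energy minimizer with $r_1\neq0$ would force $s(m')<0$ for some $m'\in(m_*,m)$, a contradiction. If instead $r_1=0$, the problem reduces to Kuramoto in $2\theta$, which gives only the trivial solution.
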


Remark that the monotonicity of the free energy $\cF_{K,m}$ in $K,m$ implies that $K_c(m)$ is non-increasing and continuous (see   \cref{monotonicity of K_c}). Furthermore, \cref{phase theorem} clarifies the complex behavior of $K_{c}(m)$ as depicted in the figure below.

{Unfortunately, we are currently unable to determine the exact value of $m_*$, only to give estimates for it. We refer to \Cref{ssec:introOP,ssec:PTm*} for further discussion on this point.}
    
    \begin{figure}[h]   
    \centering
    \includegraphics[width=0.7\textwidth]{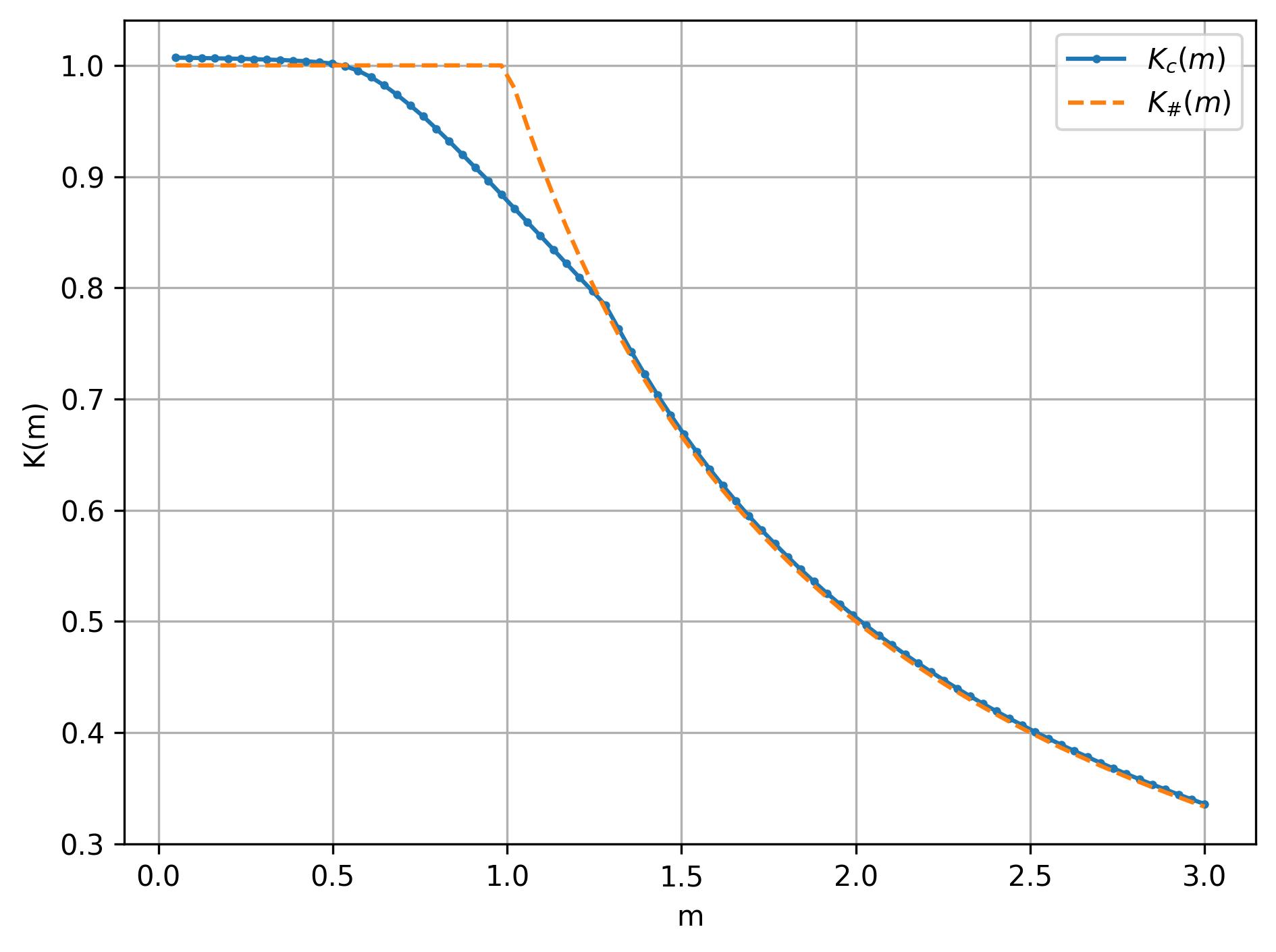}
    \caption{
    The \textit{blue} curve corresponds to the critical interaction strength $K_{c}(m),$ and the \textit{orange} curve corresponds to the linear stability threshold for the uniform distribution, given by $K_{\#}(m) = \min\{1, m^{-1}\}.$ Theorem \ref{phase theorem} gurantees that these two curves coincide for $m \in [0, 0.5] \cup [m_{*}, \infty)$, and that the \textit{blue} curve lies below the \textit{orange} curve in the remaining region $m \in (0.5, m_{*}).$}
    \label{fig:K_c-diagram}
    \end{figure}

\begin{remark}
Previous work \cite{chayes} identified a criterion relating the continuity of the phase transition to the comparison between $K_{c}$ and $K_{\#}$ (see \cref{phase trans continuity prop} below). Our \cref{phase theorem} provides a concrete example that is not covered by their criterion, showing that there exists $m_{*} \in (1, 2)$ such that $K_{c}(m_{*}) = K_{\#}(m_{*})$, but the phase transition is discontinuous. In particular, this demonstrates that the criterion in \cite{chayes} is not a dichotomy. 
\end{remark}


Our second---and most technically involved---contribution is to generalize the linear stability analysis of global minimizers of the Kuramoto model done by Bertini et al. \cite{bertini} to the Kuramoto-Daido model. Although there are some numerical investigations and formal stability analysis \cite{bertoli_stability_2024,bertoli_correction_2025}, to the best of our knowledge, our \cref{stability thrm} presented below is the first rigorous result for multi-modal interactions.

To state the result, let $L_0^2(\bT)$ denote the zero-average subspace of $L^2(\bT)$ and introduce the rigged Hilbert space
\begin{align}\label{eq:rigHS}
    H_q^1(\bT) \subset L_0^2(\bT) \subset H_{1/q}^{-1}(\bT),
\end{align}
where
\begin{align}\label{eq:Hq1def}
    H_q^{1} \coloneqq \{f\in L_0^2 : \|f\|_{H_q^1} < \infty\}, \qquad \|f\|_{H_q^1}^2 \coloneqq \int |\p_\theta f|^2 qd\theta,
\end{align}
and $H_{1/q}^{-1}$ is the dual with respect to the distributional pairing $\langle{\cdot,\cdot}\rangle$. Concretely,
\begin{align}\label{eq:Hq1'def}
    H_{1/q}^{-1} = \{f\in \mathcal{S}'(\bT) : \hat{f}(0) = 0 \ \text{and} \ \|f\|_{{H}_{1/q}^{-1}}<\infty\}, \qquad \|f\|_{{H}_{1/q}^{-1}}^2 \coloneqq \int |\p_\theta^{-1}f|^2\frac{d\theta}{q},
\end{align}
where $\partial_\theta^{-1}$ is the Fourier multiplier defined by $\widehat{\partial_\theta^{-1}f}(k) = (ik)^{-1}\hat{f}(k)\indic_{k\ne 0}$.\footnote{If $\int fd\theta = 0$, then it is straightforward that $\p_\theta^{-1}f = \int_0^{(\cdot)}fd\theta$.}
Remark that $H_q^1, H_{1/q}^{-1}$  and their norms are equivalent to the usual Sobolev spaces $H^1 = H_{\quni}^1$, $H^{-1} = H_{1/\quni}^{-1}$. This follows from the trivial bounds $e^{-4K\|W\|_{L^\infty}}\le q\le e^{4K\|W\|_{L^\infty}}$.

\begin{thm} \label{stability thrm}
For any $m\in\bR$, the operator $L_q$ densely defined on $H_{1/q}^{-1}$ with domain $D(L_q) = H_q^1$ is self-adjoint and has pure point spectrum $\sigma(L_q)$.

Define the threshold
\begin{align}\label{eq:m0_def}
    m_0 \coloneqq 1.590 \times 10^{-4}.
\end{align}
Assume $K>1$ and $m \in (0,m_0]$. Then $\sigma(L_q)\subset (-\infty, 0]$, and the eigenvalue $0$ has one-dimensional eigenspace spanned by $q'$. Moreover, we have the spectral gap
\begin{align}
    \mathrm{gap}(L_{q}) \geq {C_{1} \over C^2},
\end{align}
where $C_{1}>0$ is defined in  in \cref{Dirichlet lower} \eqref{eq:Dirichlet lower} and $C>0$ is the optimal constant of the Poincar\'{e} inequality $\|\cdot\|_{H_{1/q}^{-1}}\le C\|\cdot\|_{L_{1/q}^2}$. 
\end{thm}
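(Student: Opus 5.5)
The plan is to follow the Dirichlet form strategy of \cite{bertini}: write $-L_q$ as the operator associated with the second variation of $\cF$, and reduce the spectral gap to a finite-dimensional positivity statement that involves only the modes $\cos\theta,\sin\theta,\cos2\theta,\sin2\theta$.

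\emph{Step 1: divergence form and self-adjointness (any $m\in\bR$).} Since $q$ is stationary, $(\log q)'=2K\,\p_\theta W*q$. Using this, one checks that
\begin{align*}
L_q u=\p_\theta\Big[q\,\p_\theta\Big(\frac{u}{2q}-K\,W*u\Big)\Big].
\end{align*}
Pair with $v$ in the $H^{-1}_{1/q}$ inner product $\langle u,v\rangle_{H^{-1}_{1/q}}=\int \p_\theta^{-1}u\,\p_\theta^{-1}v\,\frac{d\theta}{q}$ and integrate by parts. This gives
\begin{align*}
\langle -L_qu,v\rangle_{H^{-1}_{1/q}}=\int \frac{uv}{2q}\,d\theta-K\int v\,(W*u)\,d\theta \eqqcolon \mathcal{D}(u,v).
\end{align*}
The form $\mathcal{D}$ is symmetric because $W$ is even. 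It is bounded below and closed on $H^1_q$-type domains: it is a bounded perturbation of $\frac12\|u\|_{L^2_{1/q}}^2$, which is equivalent to $\|u\|_{L^2}^2$ by the bounds on $q$. Hence $L_q$ is self-adjoint by the Friedrichs/KLMN construction. The embedding $H^1_q\hookrightarrow H^{-1}_{1/q}$ is compact, so the resolvent is compact and the spectrum is pure point.

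\emph{Step 2: reduction to a finite-dimensional problem.} Write $a_j=\int u\cos j\theta$ and $b_j=\int u\sin j\theta$. Then
\begin{align*}
\mathcal{D}(u,u)=\tfrac12\|u\|_{L^2_{1/q}}^2-K\pi(a_1^2+b_1^2)-Km\pi(a_2^2+b_2^2).
\end{align*}
We minimize $\|u\|^2_{L^2_{1/q}}$ subject to prescribed $(a_j,b_j)$. By Cauchy--Schwarz the minimizer is $u=q\,\phi$ with $\phi$ in the span of these four trigonometric functions, and the minimum equals $\xi^\top G^{-1}\xi$. Here $G$ is the Gram matrix $\big(\int q\,e_ie_k\big)$. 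Because $q$ is even, $G$ splits into a cosine block (entries $\int q\cos^2\theta$, $\int q\cos\theta\cos2\theta$, $\int q\cos^22\theta$, expressible through $r_1,r_2,r_3,r_4$) and a sine block.

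The target is a bound of the form
\begin{align*}
\mathcal{D}(u,u)\ge C_1\|u\|^2_{L^2_{1/q}}\qquad \text{for } u\perp q' \text{ in } H^{-1}_{1/q}.
\end{align*}
We split $u=u_\parallel+u_\perp$, where $u_\parallel\in q\cdot\mathrm{span}\{e_i\}$ and $u_\perp$ has vanishing $a_j,b_j$. The form is exactly $\frac12\|u_\perp\|^2_{L^2_{1/q}}$ on $u_\perp$, and on $u_\parallel$ it is the quadratic form $\frac12 G-K\pi\,G\,\mathrm{diag}(1,m)\,G$ on coefficient vectors. This reduces the problem to checking definiteness of two explicit $2\times2$ matrices.

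In the sine block, $q'=-q\,(2Kr_1\sin\theta+4Kmr_2\sin2\theta)$ lies in the kernel. The self-consistency equations \eqref{sc equation} should make the sine matrix degenerate exactly along this direction. The orthogonality $\langle u,q'\rangle_{H^{-1}_{1/q}}=-\int u\log q=0$ becomes the linear constraint $r_1a_1+mr_2a_2=0$ on the cosine coefficients. We therefore need the cosine block to be positive definite on that constraint's complement appropriately, and the sine block to be positive on the complement of $q'$. Combining the two blocks gives $C_1$, and then Poincaré $\|u\|_{H^{-1}_{1/q}}\le C\|u\|_{L^2_{1/q}}$ yields $\mathrm{gap}\ge C_1/C^2$. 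This also shows that the kernel is spanned by $q'$.

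\emph{Step 3 (main obstacle): quantitative positivity for small $m$ and all $K>1$.} We need to check the sign of determinants built from the moments $r_1,\dots,r_4$ of $q_{r_1,r_2}$, uniformly in $K\in(1,\infty)$. At $m=0$ the relevant quantity is Bertini's inequality of the type $1-\bar r_1^2-\bar r_1/(2K)>0$ (equivalently $2K(1-\bar r_1^2)<1$-type statements for Bessel ratios). For the second mode we need an analogous strict bound on $\int q\sin^22\theta$ versus $2K m$-weighted terms, with explicit margin.

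I would first prove the $m=0$ inequalities with explicit margins that stay uniform as $K\to1^+$ (where $\bar r_1\to0$) and as $K\to\infty$. Then I would control $|r_j(K,m)-\bar r_j(K)|=O(mK\cdot)$ via the implicit function theorem applied to \eqref{sc equation} and the Lipschitz dependence of $q$ on the exponent. The explicit threshold $m_0$ arises from balancing these perturbation errors against the $m=0$ margins. The delicate regimes are $K\downarrow1$, where the margin degenerates and one must expand to second order, and $K\to\infty$, where $q$ concentrates and the moment estimates need asymptotics in $1/K$.
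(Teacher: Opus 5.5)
Your Step 1 (Kato--Rellich/KLMN, with form domain $L^2_{1/q}$) and the final Poincar\'e step essentially match the paper. The problem is in Step 2: you minimize $\|u\|^2_{L^2_{1/q}}$ with prescribed $(a_j,b_j)$ but drop the constraint $\int u\,d\theta=0$. Since $\int q\cos\theta\,d\theta=r_1$ and $\int q\cos2\theta\,d\theta=r_2$ are nonzero, $u_\parallel$ generally has nonzero mean. So $u_\perp$ is not free: it must carry mean $-\int u_\parallel\,d\theta$.

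Without this constraint, your cosine block $\frac12G_c-KG_cMG_c$ is indefinite for every $K>1$. Here $G_c=(D_{ij})$ and $M=\mathrm{diag}(1,m)$; the factor $\pi$ is spurious, since with your normalization $\int u\,(W*u)=a_1^2+b_1^2+m(a_2^2+b_2^2)$. Relative to $G_c$, the eigenvalues of this block are $\frac12-\lambda_\pm$. By \cref{lem:evalests}, $\lambda_+>K-\frac12>\frac12$, so $\mathbf{D}(u_{\lambda_+})<0$; at $m=0$, simply $\mathbf{D}(q\cos\theta)=-(K-1)D_{11}<0$. No definiteness check of two $2\times2$ matrices can therefore succeed.

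The constraint you invoke to rescue the cosine block, $r_1a_1+mr_2a_2=0$ from $u\perp q'$, is not available. Since $q$ is even, $q'$ is odd, and both the $L^2_{1/q}$ and $H^{-1}_{1/q}$ pairings vanish between even and odd functions. Orthogonality to $q'$ therefore constrains only the sine part; in $L^2_{1/q}$ it reads $r_1b_1+2mr_2b_2=0$. For example, $u=q\cos\theta-r_1/(2\pi)$ has zero mean and is orthogonal to $q'$, yet $r_1a_1+mr_2a_2=r_1D_{11}+mr_2D_{12}>0$.

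The missing idea is to keep the constant mode. In the even sector, minimize over $u=q\phi$ with $\phi\in\mathrm{span}\{1,\cos\theta,\cos2\theta\}$ and moment vector $(0,a_1,a_2)$. The Schur complement, with respect to the constant, of the $3\times3$ Gram matrix of $(1,\cos\theta,\cos2\theta)$ under $q$ is the covariance matrix $\Sigma\coloneqq(D_{ij}-r_ir_j)_{i,j=1,2}$. The even part of $\mathbf{D}$ then becomes $\frac12\|w\|^2_{L^2_{1/q}}+\frac12a^\top\Sigma^{-1}a-Ka^\top Ma$, where $w$ is the part with vanishing zeroth, first and second moments.

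This is exactly the paper's $V_0$ step. There $V_0$ contains the constants, $a_0$ in \eqref{eq:a0} is forced by $\int u=0$, and \cref{opt in V0} computes the cost $\frac{a_0^2}{2}\|1+\hat v_0\|^2_{L^2_{1/q}}$. Written in the coordinates $(c_{\lambda_+},c_{\lambda_-})$, the outcome is the matrix $\fC$ of \cref{lem:maindet}. The sine sector is handled as you anticipated, via $\tilde\mu=\frac12$ (eigenvector $q'$) and $\mu<\frac12$.

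Once corrected, the even-sector condition $\frac12\Sigma^{-1}>KM$ is, for $m>0$, congruent to $M-2KM\Sigma M>0$. By \eqref{hessian} this is precisely $\nabla^2F_{K,m}(r_1,r_2)>0$. At $m=0$ it reduces to $2K\Var_{\bar q}[\cos\theta]<1$, i.e.\ $\bar r_1^2>\bar r_2$, rather than the inequalities guessed in your Step 3. So for $m\in(0,\frac14]$ the corrected route closes via \cref{positive hessian in first}, or directly from \eqref{E inequalities}, which give $\det\nabla^2F_{K,m}\ge 8K^4m(1-2m)r_1(D_{12}-r_1r_2)>0$. No perturbation in $m$ is needed.

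The paper instead proves the equivalent inequality \eqref{Main Det} through the perturbative estimates of \cref{sec:Tech}. That is where the restriction $m\le m_0$ originates, and your Step 3 would reproduce that detour.
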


{We emphasize that \cref{stability thrm} shows if $m\le m_0$, then a spectral gap holds for all $K>1$. It is not too difficult to prove using perturbation theory \`{a} la Kato \cite{kato2013perturbation} that for any $m$ sufficiently small, there exists a $K_m>1$ such that for any $1<K\le K_m$, a spectral gap holds. In fact, combining this perturbation theory with \cref{rem:multidgen}, one sees that in any dimension and for sufficiently regular $W$ with a dominant mode, there exists an $m_0>0$ such that for any $m\le m_0$, there is a $K_m>K_c$ such that for $K_c<K\le K_m$, a spectral gap holds. Since the argument is quantitative, the proof yields an explicit lower bound for the spectral gap. The difficulty lies in showing that a spectral gap holds for all $K>K_c$ once $m$ is sufficiently small (i.e. removing the upper bound $K_m$), and overcoming this difficulty requires starting the proof of the spectral gap from scratch. } 

\begin{remark}\label{rem:noisytransformer}
     Since the ``high-temperature'' limit of the noisy transformer model in \cite{transformertry,rigollet25} is the Kuramoto model, this perturbative approach described above yields a spectral gap for the noisy transformer model for small $\beta$, which seems to be a new observation.
\end{remark}

\subsection{Outline of the proof}
Let us briefly comment on the proofs of \Cref{phase theorem,stability thrm}.

The starting point of our analysis is to pass to a finite-dimensional problem by parametrizing critical points of the free energy \eqref{original free energy def} in terms of the real parts $r_{1},r_{2}$ of their first and second Fourier coefficients. The critical points of \eqref{original free energy def} are in bijection with the critical points of the two-dimensional free energy $F_{K,m}$ introduced in \eqref{F def}. Having made this passage, we are in a position to prove \cref{phase theorem} by separately analyzing phase transitions for the cases $m \leq 1/2$ and $m>1/2$, as we describe below. This is all presented in \cref{section 3}.

For $m \leq 1/2,$ we show that the phase transition occurs at the linear stability threshold $K_{\#}=1$ of the uniform distribution. Specifically, for $K\leq 1,$ we establish the uniqueness of the critical point of $F_{K, m}.$ This follows from the symmetry of $F_{K, m},$ the one-dimensionality of the manifold of critical points (see  \cref{search space shrink}), and convexity estimates of the free energy established in \cite{Onsager}. For $K>1,$ we apply a perturbation argument to show that there exists a small nontrivial solution $(r_1, r_2) \neq (0, 0)$ such that $F_{K, m}(r_1, r_2)<0,$ implying that the uniform distribution no longer minimizes $F_{K, m}.$ 

For $m > 1/2,$ we derive upper bounds on $K_{c}$ by carefully constructing $(r_{1},  r_{2})$ using perturbation arguments to ensure that the free energy is negative. Specifically, setting $(r_{1}, r_{2}) = (\eps(m)^{1/2}, \eps(m))$ for small $\eps(m)>0,$ we show that $K_{c} < 1- O((m-{1/ 2})^{2}).$ For $m \geq 2,$ we write $F_{K, m}$ in terms of the rescaled interaction strength $\tilde{K} \coloneqq K/m$ and show $K_{c} = m^{-1}$ by a similar argument to the one showing that $K_{c}=1$ when $m \leq 1/2.$ Furthermore, from the continuity of free energy, we deduce there exists $m_* \in (1, 2)$ such that $K_{c}<m^{-1}$ for $m\in (1, m_*)$ and $K_{c} = m^{-1}$  for $m\geq m_{*}.$

\medskip
Having described the proof of \cref{phase theorem}, we turn to the proof of \cref{stability thrm}, which is divided over \Cref{section 4,sec:Tech}. Since a detailed outline of the proof is presented in \cref{ssec:stab_out}, we shall be brief in our remarks here. 

At high level, our approach is inspired by that of \cite{bertini} for the Kuramoto model. But due to the dependence on two intertwined order parameters $r_1,r_2$, the analysis in the present paper is much more computationally involved and is not a straightforward extension of \cite{bertini}, nor completely analogous.  Recalling the Hilbert rigging \eqref{eq:rigHS},  our goal is to show a lower bound for the Dirichlet form
\begin{align}\label{eq:introDF}
    \mathbf{D}(u)= -\inner{u}{L_{q} u}_{H^{-1}_{1/q}} = {1 \over 2} {\inner{u}{u}_{L^{2}_{1/q}} - \inner{u} {q{J}*u}_{L_{1/q}^2}} 
\end{align}
after modding out the kernel of $L_q$, which turns out to be spanned by $q'$. As previously mentioned, this kernel is generated by the translation symmetry of the model. The diligent reader will recognize that the last expression in \eqref{eq:introDF} is the second variation of the free energy \eqref{original free energy def}. 

The primary challenge in estimating the Dirichlet form arises from the intertwining nature of the contributions of $\cos\theta$ and $\cos2\theta$, generating higher-order Fourier coefficients. Moreover, a number of quantities that appear in the proof of the lower bound are explicitly computable or take simple form when $m=0$, but not when $m>0$. To handle this problem, we rely on first-order approximation estimates in $m$ of the form $r_j = \bar{r}_j + O(m)$, for $j=1,2$, where $\bar{r}_{j}$ is the value of $r_{j}$ at $m = 0.$ From this approximation, we generalize Tur\'anian inequalities \cite{turanian, turanianmonotonicity} for $\bar{r}_{1}$ and $\bar{r}_{2}$ into similar bounds for $r_1$ and $r_2.$ These technical generalization enables us to estimate the complicated terms in the Dirichlet form. On the other hand, the perturbative nature of the argument is responsible for restriction on $m$ beyond $m\le {1/4}$.

\subsection{Further directions and open problems} \label{ssec:introOP}
In closing the introduction, we briefly indicate a few directions suggested by the present work and record several questions that remain open. We also highlight ongoing and forthcoming work that contains related developments.

The first question is the exact value of the unknown threshold $m_*$ from \cref{phase theorem}, which only states that $m_* \in (1,2)$. The numerics behind \cref{fig:K_c-diagram} suggest the shaper range
\begin{align}
    1.245 < m_{*} < 1.282.
\end{align}

Another important question is the number of non-uniform stationary solutions and their stability for $m > 1/4$ and $m > m_{0},$ respectively. In particular, when $m> m_{*},$ we conjecture that the model is identical to the case $m< 1/2,$ in terms of the number of stationary solutions and their  stability. The reason is that these two situations share a continuous phase transition, and the contribution of one cosine mode dominates the other. In the discontinuous phase transition region $m \in (1/2, m_{*}),$ the situation is more complicated. We do not know the exact value of $K_{c}$, nor a classification of the stationary solutions in the (super)critical regimes, let alone their stability. At present, we only have bound for $K_{c}$ when $m \geq 1/2$ (see \Cref{m near half upper,big m phase trans}),
\begin{align}
    {1 \over 2(1+m)} \leq K_{c} \leq \min \left\{ {1 \over m}, 1 - O\left({(m^{2} - 1/4)^{2} \over m^{8}}\right) \right \}.
\end{align}

Regarding linear stability, when $K>1$ and $m \leq m_{0},$ we do not expect the lower bound given by \cref{stability thrm} to be optimal. One may obtain a sharper bound than that in  (see \cite[Section 2.5]{bertini}), but this resulting bound is still expected to be suboptimal and so we omit it. In fact, even when $m=0$, the optimal spectral gap is unknown to our knowledge. The lower bound obtained in \cite{bertini} vanishes as $K\rightarrow\infty$, while the authors of that work suggest, based on numerics, that the gap does not vanish in the large $K$ limit. Finally, there is the obvious problem of showing the (non)existence of a spectral gap when $m>m_0$, the simplest case to consider being when $m_0 < m\le {1/4}$. 

Turning to nonlinear stability/trend to equilibrium (under the assumption, when $K<K_c$, it is easy to prove that starting from any initial datum, the solution $q_t$ of \eqref{MV equation} converges to the uniform measure $\quni$ exponentially fast at a rate determined by the spectral gap of $L_{\quni}$. In the critical case $K=K_c$, an exponential rate of convergence to $\quni$ fails, due to the vanishing of the spectral gap, but one can still obtain polynomial rates of convergence (cf. \cite{monmarche_local_2025}). The supercritical regime $K>K_c$ is more subtle. It is not hard to upgrade our linear stability result to a \emph{local}, exponentially fast convergence to the global minimizer (cf. \cite{nonlinearstab} for the Kuramoto model). The obtained rate is determined by the value of the spectral gap for $L_q$. In fact, we  know more. Because the free energy $\mathcal{F}_{K,m}(q_t)$ is decreasing along the flow, once it is below $\mathcal{F}_{K,m}(\quni)=0$ when $K\le m^{-1}$ and $\mathcal{F}_{K,m}(q_{0,c})<0$ when $K>m^{-1}$, it must remain below these respective thresholds for all future time. Since by soft arguments, $q_t$ converges to a stationary solution, it follows that there is some $T_*$ after which the convergence to a global minmizer is exponentially fast. The difficulty is determining precisely the conditions under which the free energy crosses these thresholds and after how long---what would constitute a global convergence result. Unlike the Kuramoto model whose global convergence is characterized by the (non)vanishing of the first Fourier mode \cite{nonlinearstab,delarue2021uniform}, no such simple characterization exists when $m>0$. Moreover, even though $\quni, q_{0,c}$ are unstable in a global sense, they are still stable under certain perturbations. For  example, any solution with zero first and second Fourier modes necessarily converges to $\quni$ exponentially fast. Thus, it is important to completely determine the (un)stable manifolds of $\quni, q_{0,c}$. These questions, which we have partially resolved, are the subject of ongoing work. Remark that the choice of topology in the above discussion is immaterial, since the parabolic smoothing of the flow allows for convergence in any Sobolev norm.



{Finally, let us take this opportunity to advertise some closely related work in preparation.} As remarked in \cref{ssec:intrors}, the energy of the Kuramoto model is isomorphic to that of the $O(2)$/classical mean-field XY model. It is a celebrated result of Ellis and Newman \cite{fluctuation} that the fluctuations of the total magnetization/order parameter $\sum_{j=1}^N e^{i\theta_j}$, when $\Theta_N=(\theta_j)$ are distributed according to the canonical Gibbs ensemble, are non-Gaussian in the $N\rightarrow\infty$ limit at the critical inverse temperature $\beta_c=2K_c=2$ and, moreover, scale like $N^{3/4}$ in contrast to the $N^{1/2}$ scaling for iid $\theta_i$ observed in the sub-and supercritical regimes. See also the later work \cite{eichelsbacher_steins_2010,chatterjee_nonnormal_2011,  kirkpatrick_asymptotics_2016, garoni2025rate} that obtains quantitative versions of their convergence result using Stein's method. It has been speculated (see, e.g., \cite[Section 2.2.1]{delgadino2023phase}) that this non-Gaussianity and anomalous scaling at the critical temperature are universal, but to the best of our knowledge, the Kuramoto model is the only example within the class of interactions \eqref{W first assumption} for which this has been shown. By revisiting \cite{fluctuation}, we show in  the forthcoming work \cite{mun_critical_nodate} non-Gaussianity and anomalous scaling of fluctuations in the critical regime $K=K_c$ of canonical ensembles with general multimodal energies. We go beyond \cite{fluctuation} by characterizing not just the limiting behavior of fluctuations of the order parameter but also those of the full empirical measure. Interestingly, the anomalous scaling only holds for the first order parameter; and while the first and second order parameters have non-Gaussian limits, the higher order parameters have Gaussian limits.  Building on the more recent work \cite{eichelsbacher_steins_2010, chatterjee_nonnormal_2011,kirkpatrick_asymptotics_2016, garoni2025rate}, we even have quantitative rates. To the best of our knowledge, this is the first such result for Gibbs ensembles associated to multimodal energies. 

Tying the discussion of the preceding paragraph back into the dynamics \eqref{Dynamics}, it would be interesting to observe the timescale on which the non-Gaussianity and anomalous scaling emerges in the critical regime. Starting from iid positions, the fluctuations of the total magnetization behave as in the usual CLT initially. But since the joint distribution of $\Theta_N(t)$ converges to the canonical ensemble as $t\rightarrow\infty$, the fluctuations should grow and non-Gaussianity should emerge on some $N$-dependent timescale. Investigating this question is a natural next step.

\subsection{Organization of the paper}
  In \cref{Section 2}, we recall some technical properties of Bessel functions that will be used throughout the body of the paper.
 In \cref{section 3}, we prove our first main result, \cref{phase theorem}. In \cref{section 4}, we prove our second main result, \cref{stability thrm}. The proof of the lower boundedness of the Dirichlet form behind \cref{stability thrm} relies on several auxiliary technical estimates that are deferred to \cref{sec:Tech}. At the beginnings of \Cref{section 3,section 4,sec:Tech}, we give further comments explaining the organization  of their respective subsections.
 

\subsection{Acknowledgments}
The authors thank Minh-Binh Tran for sharing useful references regarding the Kuramoto-Daido model. The authors also thank Cole Graham for helpful discussion.

\section{Bessel functions}\label{Section 2}
{In this section, we review some preliminary inequalities and properties of modified Bessel functions.}

From \eqref{sc equation}, we naturally introduce the following Bessel functions to investigate properties of $r_1$ and $r_2.$ We write  $I_{n}(x, y),$ the $n$-th Fourier coefficient of $\exp(x \cos \theta + y \cos 2\theta).$ \begin{align} \label{Bessel def}
    I_{n}(x, y) \coloneqq {1 \over 2\pi} \int \cos n \theta \ \exp( x \cos \theta + y \cos 2\theta) \ d\theta, \quad n \in \mathbb{Z}, \ x, y \in \bR.
\end{align}
\cite{Onsager} studied the convexity of $\log I_{0},$ leading to the following result.

\begin{lemma}\cite[Proposition 2.2, Lemma 7.4]{Onsager} \label{I technical lemmas}The following inequalities hold. \begin{enumerate}
    \item For all $n \in \mathbb{Z}$, we have \begin{align}
        I_{n}(x, y) >0, \quad x, y >0.
    \end{align}
    \item For $(x, y ) \in (0, \infty)^{2},$   \begin{align} \label{I property 1}
        \frac{2}{x^{2} I_{0}^{2}} \left(I_{1} (I_{0} + xI_{1}) - xI_{0}  \partial_{x}I_{1} \right) \geq \frac{1}{I_{0}^{2}} ( \partial_{xy}I_{0}  I_{0} - I_{1} I_{2}) >0.
    \end{align}
    \item For $(x, y ) \in (0, \infty)^{2},$  \begin{align} \label{I property 2}
        {I_{1} \over x I_{0}} - {1 \over 2I_{0}^{2}} (\partial_{yy} I_{0}  I_{0} - I_{2}^{2}) > {1 \over xI_{0}^{2}} ( \partial_{xy}I_{0}  I_{0} - I_{1} I_{2}).
    \end{align}  
\end{enumerate}   
\end{lemma}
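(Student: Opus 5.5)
The plan is to treat all three statements through the probability measure
\[
d\mu_{x,y}(\theta)=\frac{\exp(x\cos\theta+y\cos 2\theta)}{2\pi I_0(x,y)}\,d\theta ,
\]
so that $I_n/I_0=\mathbb E_\mu[\cos n\theta]$, and the derivative combinations in \eqref{I property 1}--\eqref{I property 2} become moments of $\mu$. Then I would combine integration-by-parts identities with expansions into products of one-variable Bessel functions.

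\emph{Step 1 (positivity).} Expand each factor using the classical generating function $e^{x\cos\theta}=\sum_{k\in\mathbb Z}I_k(x)e^{ik\theta}$, where $I_{-k}=I_k>0$ for $x>0$, and similarly for $e^{y\cos2\theta}$. Multiplying the two series gives
\[
I_n(x,y)=\sum_{j\in\mathbb Z}I_{n-2j}(x)\,I_j(y),
\]
a sum of strictly positive terms whenever $x,y>0$. Absolute convergence is immediate from the decay $I_k(x)\le (x/2)^{|k|}e^{x}/|k|!$. This proves (1).

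\emph{Step 2 (dictionary).} Differentiating under the integral and using product-to-sum formulas gives
\[
\partial_xI_0=I_1,\qquad \partial_yI_0=I_2,\qquad \partial_{xy}I_0=\tfrac12(I_1+I_3),
\]
\[
\partial_{yy}I_0=\tfrac12(I_0+I_4),\qquad \partial_xI_1=\tfrac12(I_0+I_2).
\]
In particular:
\begin{itemize}
\item $(\partial_{xy}I_0\,I_0-I_1I_2)/I_0^2=\mathrm{Cov}_\mu(\cos\theta,\cos2\theta)$;
\item $(\partial_{yy}I_0\,I_0-I_2^2)/I_0^2=\mathrm{Var}_\mu(\cos2\theta)$;
\item $I_0\partial_xI_1-I_1^2=I_0^2\,\mathrm{Var}_\mu(\cos\theta)$.
\end{itemize}
I would also record the integration-by-parts identity, obtained by integrating $\partial_\theta$ of $\sin n\theta\, e^{x\cos\theta+y\cos2\theta}$:
\[
nI_n=\tfrac{x}{2}(I_{n-1}-I_{n+1})+y(I_{n-2}-I_{n+2}).
\]
Its cases $n=1,2$ let me eliminate $I_3,I_4$ in favour of $I_0,I_1,I_2$ and the ratio $I_1/x$.

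\emph{Step 3 (the inequalities).} For the middle positivity in \eqref{I property 2}-type form, namely $\mathrm{Cov}_\mu(\cos\theta,\cos2\theta)>0$, I would write $\partial_{xy}I_0\,I_0-I_1I_2$ via the series from Step 1 as a double sum of products $I_a(x)I_b(y)I_c(x)I_d(y)$. I would then pair terms and show positivity using Turán-type inequalities $I_k^2-I_{k-1}I_{k+1}>0$ for the one-variable Bessel functions. An alternative route is the monotonicity $x\mapsto \mathbb E_\mu[\cos2\theta]$ increasing, which is the same statement since $\partial_x\mathbb E_\mu[\cos2\theta]$ is exactly this covariance. For the left inequality in \eqref{I property 1}, substitute $\partial_xI_1$; the claim becomes
\[
\frac{2}{x^2}\Big(\frac{I_1}{I_0}-x\,\mathrm{Var}_\mu(\cos\theta)\Big)\ \ge\ \mathrm{Cov}_\mu(\cos\theta,\cos2\theta).
\]
I would then use the $n=1$ identity to rewrite $I_1/I_0$ in terms of $\mathbb E_\mu[\sin^2\theta]$-type moments. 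Indeed, by integration by parts $\mathbb E_\mu[\cos\theta]=x\,\mathbb E_\mu[\sin^2\theta]+4y\,\mathbb E_\mu[\sin\theta\sin2\theta\,\cos\theta]/\ldots$, or more cleanly $\mathbb E[\cos\theta]=\mathbb E[\sin\theta\,(x\sin\theta+2y\sin2\theta)]$. After this rewriting both sides are quadratic moment expressions, and I would close the bound by a Cauchy--Schwarz estimate. Inequality \eqref{I property 2} is handled the same way: it reads
\[
\frac{\mathbb E[\cos\theta]}{x}-\tfrac12\mathrm{Var}(\cos2\theta)>\frac1x\mathrm{Cov}(\cos\theta,\cos2\theta),
\]
and I would again replace $\mathbb E[\cos\theta]/x$ by the integration-by-parts moment.

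The main obstacle is Step 3, and especially \eqref{I property 2}. The two sides involve different $x$-weights, and naive Cauchy--Schwarz loses constants. If the moment approach stalls, I would fall back on the explicit series representation and a term-by-term comparison via Turán inequalities, splitting into the regimes of small $x$ (Taylor expansion) and large $x$ (Laplace asymptotics near $\theta=0$) if necessary.
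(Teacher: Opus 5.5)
Your Steps 1 and 2 are correct. The expansion $I_n(x,y)=\sum_{j\in\mathbb{Z}}I_{n-2j}(x)I_j(y)$ proves part (1), and both the derivative dictionary and the recursion $nI_n=\tfrac{x}{2}(I_{n-1}-I_{n+1})+y(I_{n-2}-I_{n+2})$ are right. But Step 3 carries all the content of \eqref{I property 1} and \eqref{I property 2}, and it is a list of tools rather than an argument. As written, you have proved only part (1). The paper does not prove this lemma either: it imports it from \cite[Proposition 2.2, Lemma 7.4]{Onsager}. So you should either cite that or supply a real proof.

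The strict positivity $\mathrm{Cov}_\mu(\cos\theta,\cos2\theta)>0$ is the easy piece, but saying that $x\mapsto\mathbb{E}_\mu[\cos2\theta]$ is increasing only restates it. One way to close it: push $\mu$ forward to $s=|\cos\theta|$. Its law $\nu$ on $[0,1]$ has density proportional to $\cosh(xs)\,e^{y(2s^2-1)}(1-s^2)^{-1/2}$. Then $\mathrm{Cov}_\mu(\cos\theta,\cos2\theta)=2\,\mathrm{Cov}_\nu(s^2,s\tanh(xs))>0$ by Chebyshev's inequality for two strictly increasing functions.

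The two genuine inequalities are where your plan has no mechanism. Write $a_n=\mathbb{E}_\mu[\cos n\theta]$ and use your own $n=1,2$ identities. One finds that $\tfrac{x^2}{2}$ times (left member minus middle member) of \eqref{I property 1} equals
\[
y\Big[(a_1-a_3)(1+xa_1)-\tfrac{x}{2}(1-a_4)\Big]=4y\big(f-x\,\partial_xf\big),\qquad f(x,y):=\mathbb{E}_{\mu_{x,y}}[\sin^2\theta\cos\theta].
\]
So this inequality is an identity at $y=0$. Any estimate applied before extracting the factor $y$ must therefore be exact there. After extracting it, what remains is exactly the statement that $x\mapsto f(x,y)/x$ is nonincreasing. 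That is a monotonicity property of a non-monotone observable under exponential tilting, and nothing in your plan addresses it; a Cauchy--Schwarz bound on ``quadratic moment expressions'' is not set up to deliver it.

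Similarly, \eqref{I property 2} degenerates at the corner $(x,y)=(0,0)$. For fixed $y>0$,
\[
a_1-\tfrac{x}{2}\mathrm{Var}_\mu(\cos2\theta)-\mathrm{Cov}_\mu(\cos\theta,\cos2\theta)=\tfrac{x}{2}\big(A-1+\tfrac{2A}{y}+2A^2\big)+O(x^3),
\]
with $A=I_1(y)/I_0(y)$ (one-variable Bessel functions). The bracket tends to $0$ as $y\to0$, and at $y=0$ the difference is only $x^3/48+O(x^5)$. Hence your fallback split into small-$x$ Taylor and large-$x$ Laplace regimes cannot be made uniform in $y$, and the intermediate region is not addressed at all.

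What is missing is an actual proof of these two inequalities. For example, you could reproduce the argument of \cite{Onsager}; for \eqref{I property 1}, a direct proof that $f/x$ decreases in $x$ for every $y>0$ would suffice.
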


Furthermore, we define the one-variable Bessel function, which is special case of $I_{n}(x, y)$ when $y=0.$ We denote it as \begin{align}
    {I}_{n}(x) \coloneqq {1 \over 2\pi} \int \cos n\theta \ \exp(x \cos \theta) \ d\theta, \quad n \in \mathbb{Z}, \ x \in \bR.
\end{align}
with abuse of notations to \eqref{Bessel def}. 
\begin{lemma}\cite[Section 3.7]{watson1922treatise} The following statements hold. 
    \begin{enumerate}
        \item \label{non-negativity of bar I} For $n \geq 0,$ 
        \begin{align}
        I_{n}(x) \geq 0 \quad \forall x \geq 0,
        \end{align}
        with the equality if and only if $x=0.$
        \item \label{monotonicity of I} For $x \geq 0,$ the function 
        \begin{align}
            \bZ_{ \geq 0} \to [0, \infty), \quad n \mapsto I_{n}(x) 
        \end{align}
        is monotonically decreasing.
    \end{enumerate}
\end{lemma}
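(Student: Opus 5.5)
Both statements come from the integral representation $I_n(x)=\frac{1}{2\pi}\int\cos n\theta\, e^{x\cos\theta}\,d\theta$, combined with the power series and the recurrence relations of the modified Bessel functions.

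For part \eqref{non-negativity of bar I}, expand $e^{x\cos\theta}=\sum_{k\ge0}x^k\cos^k\theta/k!$ and integrate term by term against $\cos n\theta$. This is justified by uniform convergence on $\bT$. Writing $\cos^k\theta=2^{-k}\sum_{j=0}^k\binom{k}{j}\cos((k-2j)\theta)$, only the terms with $k-2j=\pm n$ survive, and each contributes a positive amount. This gives the classical series
\begin{align}
I_n(x)=\sum_{j\ge0}\frac{(x/2)^{2j+n}}{j!\,(j+n)!},
\end{align}
whose terms are all nonnegative for $x\ge0$. Hence $I_n(x)\ge0$, and for $x>0$ the $j=0$ term $(x/2)^n/n!$ is strictly positive. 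At $x=0$ we have $I_n(0)=\delta_{n0}$, which vanishes for $n\ge1$. So the equality case ``iff $x=0$'' is meant for $n\ge1$; for $n=0$ one has $I_0\ge1$ and the equality clause is vacuous.

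For part \eqref{monotonicity of I}, I would use the recurrences obtained by differentiating under the integral and integrating by parts:
\begin{align}
I_{n-1}(x)-I_{n+1}(x)=\frac{2n}{x}I_n(x),\qquad I_{n-1}(x)+I_{n+1}(x)=2I_n'(x).
\end{align}
Adding them gives $I_n'=I_{n+1}+\frac{n}{x}I_n$. Set $g_n:=I_n-I_{n+1}$; we need $g_n\ge0$ on $[0,\infty)$. We have $g_n(0)=\delta_{n0}\ge0$. Differentiating and using the recurrences,
\begin{align}
g_n'(x)=\tfrac12\bigl(I_{n-1}-I_n\bigr)+\tfrac12\bigl(I_{n+1}-I_{n+2}\bigr)=\tfrac12\bigl(g_{n-1}+g_{n+1}\bigr).
\end{align}
Here $I_{-1}=I_1$, so $g_{-1}=-g_0$. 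A coupled infinite system like this makes a plain induction awkward.

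A cleaner route is to work from the series directly. Compare the $j$-th coefficients of $I_n$ and $I_{n+1}$ in powers of $x/2$. The power $(x/2)^{2j+n+1}$ is matched by $I_{n+1}$'s term $1/(j!(j+n+1)!)$, but it does not appear in $I_n$, which has the opposite parity. So a termwise comparison fails, and instead I would compare $I_n(x)$ and $I_{n+1}(x)$ through the ratio.

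Using $I_{n+1}/I_n = (I_{n-1}-I_{n+1})x/(2n)\cdot\ldots$ is circular, so the step I would actually rely on is the integral representation
\begin{align}
I_n(x)-I_{n+1}(x)=\frac{1}{2\pi}\int_{-\pi}^{\pi}\bigl(\cos n\theta-\cos(n+1)\theta\bigr)e^{x\cos\theta}\,d\theta .
\end{align}
The factor $\cos n\theta-\cos(n+1)\theta$ equals $2\sin\frac{(2n+1)\theta}{2}\sin\frac{\theta}{2}$. Integrating by parts against the monotone weight $e^{x\cos\theta}$ on $[0,\pi]$, which is decreasing there, should yield a positive quantity. Equivalently, the primitive $\int_0^\theta(\cos n\phi-\cos(n+1)\phi)\,d\phi=\frac{\sin n\theta}{n}-\frac{\sin(n+1)\theta}{n+1}$ (with the first term read as $\theta$ when $n=0$) is nonnegative on $[0,\pi]$, by a Fejér--Jackson-type inequality. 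This positivity is the step I expect to be the main obstacle.

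The integration by parts goes as follows. Write $F(\theta)=\frac{\sin n\theta}{n}-\frac{\sin(n+1)\theta}{n+1}$, which satisfies $F(0)=F(\pi)=0$. Then
\begin{align}
2\pi\bigl(I_n(x)-I_{n+1}(x)\bigr)=2\int_0^\pi F'(\theta)\,e^{x\cos\theta}\,d\theta=2x\int_0^\pi F(\theta)\sin\theta\, e^{x\cos\theta}\,d\theta .
\end{align}
So $I_n-I_{n+1}\ge0$ follows once $F\ge0$ on $[0,\pi]$. To prove that, I would use $F'(\theta)=2\sin\frac{(2n+1)\theta}{2}\sin\frac\theta2$ to locate the critical points of $F$, namely $\theta_k=2k\pi/(2n+1)$. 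At these points $\sin(n+1)\theta_k=-\sin n\theta_k$, so
\begin{align}
F(\theta_k)=\sin n\theta_k\Bigl(\tfrac1n+\tfrac1{n+1}\Bigr).
\end{align}
Moreover, $n\theta_k=k\pi-k\pi/(2n+1)$, which gives $\sin n\theta_k=(-1)^{k+1}\sin\frac{k\pi}{2n+1}$. At the interior local minima (even $k$) this is negative, so the local-minimum check fails as stated and I will need to recheck the sign bookkeeping. The fallback is the classical route of Watson \S3.7, citing the known fact that $n\mapsto I_n(x)$ is decreasing, as the statement itself does via \cite{watson1922treatise}.
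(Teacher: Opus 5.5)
\textbf{Part (1).} Your argument for part (1) is correct. The series $I_n(x)=\sum_{j\ge0}(x/2)^{2j+n}/(j!\,(j+n)!)$ gives nonnegativity, and strict positivity for $x>0$. You are also right that the equality clause only holds for $n\ge1$, since $I_0(0)=1$. The paper itself gives no argument for either part beyond the citation to Watson.

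\textbf{Part (2) has a genuine gap.} Your identity $2\pi\bigl(I_n(x)-I_{n+1}(x)\bigr)=2x\int_0^\pi F(\theta)\sin\theta\,e^{x\cos\theta}\,d\theta$ is correct. However, the pointwise positivity $F\ge0$ on $[0,\pi]$ that you want to feed into it is false for every $n\ge2$. Your sign bookkeeping does not need rechecking; it is right. At $\theta_2=4\pi/(2n+1)\in(0,\pi)$ one gets $F(\theta_2)=-\bigl(\frac1n+\frac1{n+1}\bigr)\sin\frac{2\pi}{2n+1}<0$. Already for $n=2$, $F(\pi-t)=-\frac{\sin 2t}{2}-\frac{\sin 3t}{3}<0$ for small $t>0$. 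The Fej\'er--Jackson inequality concerns the partial sums $\sum_{k=1}^n\frac{\sin k\theta}{k}$, not a difference of two consecutive terms, so it does not rescue this. The route therefore only covers $n=0,1$, and falling back on the citation leaves (2) unproved.

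\textbf{How to close it.} The system you derived and then dropped already suffices. The obstruction you saw disappears if the induction runs over the derivative order rather than over $n$. Put $h_n:=e^{x/2}g_n$. Your equations become $h_0'=\frac12h_1$ and $h_n'=\frac12(h_{n-1}+h_n+h_{n+1})$ for $n\ge1$, with $h_n(0)=\delta_{n0}$. Each equation involves finitely many unknowns with nonnegative coefficients. Induction on $k$ then gives $h_n^{(k)}(0)\ge0$ for all $n,k\ge0$, and in particular $h_n^{(n)}(0)\ge2^{-n}$. Since each $h_n$ is entire, $h_n(x)=\sum_k h_n^{(k)}(0)x^k/k!>0$ for $x>0$, i.e.\ $I_n(x)>I_{n+1}(x)$.

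Alternatively, for $x>0$ set $\rho_n=I_{n+1}/I_n$, which is well defined by part (1). Your recurrences give $I_n'=I_{n+1}+\frac nxI_n$ and $I_{n+1}'=I_n-\frac{n+1}{x}I_{n+1}$. Hence $\rho_n$ satisfies the Riccati equation $\rho_n'=1-\frac{2n+1}{x}\rho_n-\rho_n^2$, with $\rho_n(0^+)=0$ by the series. At a first point $x_0$ with $\rho_n(x_0)=1$ one would need $\rho_n'(x_0)\ge0$, while the equation gives $\rho_n'(x_0)=-(2n+1)/x_0<0$. So $\rho_n<1$ for all $x>0$.
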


 Estimates in Tur\'anian function \cite{turanian, baricz2010turan, baricz2013turan, joshi1991some, thiruvenkatachar1951inequalities}  enables to quantitatively estimate different Bessel functions.

\begin{lemma}\cite[Section 3.4, (2.9)]{turanianmonotonicity} \label{turanian lemma}
    Define the Tur\'anian function as \begin{align}
        T_{n}(x) \coloneqq I_{n}^{2}(x) - I_{n-1}(x) I_{n+1}(x), \quad n \in \bZ, \ x \in \bR.
    \end{align}
    The following properties hold.
    \begin{enumerate}
        \item $T_{n}(x) > 0$ for all $n>-1$ and $x>0.$
        \item\label{turanian 2} For $n \in \bN,$ \begin{align}
            T_{n}(x) \leq {1 \over n+1} I_{n}^{2}(x) \quad \forall x \in \bR.
        \end{align}
        \item\label{monotonicity of turanian} Fix $x>0.$ Then, the function $n \mapsto T_{n}(x)$ is monotonically decreasing on $[0, \infty).$
    \end{enumerate}
\end{lemma}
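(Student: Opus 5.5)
The plan is to prove all three items at once from an explicit power series for $T_n$, obtained from the Cauchy product of the series of the $I_\nu$. Starting from $I_\nu(x)=\sum_{k\ge0}\frac{(x/2)^{2k+\nu}}{k!\,\Gamma(k+\nu+1)}$, I would use the standard product formula
\begin{align*}
I_\mu(x) I_\nu(x)=\sum_{k\ge 0}\frac{\Gamma(2k+\mu+\nu+1)\,(x/2)^{2k+\mu+\nu}}{k!\,\Gamma(k+\mu+1)\,\Gamma(k+\nu+1)\,\Gamma(k+\mu+\nu+1)}.
\end{align*}
It follows from the Cauchy product together with the Vandermonde-type identity $\sum_{j}\binom{k}{j}\frac{1}{\Gamma(j+\mu+1)\Gamma(k-j+\nu+1)}=\frac{\Gamma(2k+\mu+\nu+1)}{\Gamma(k+\mu+1)\Gamma(k+\nu+1)\Gamma(k+\mu+\nu+1)}$.

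The pairs $(\mu,\nu)=(n,n)$ and $(n-1,n+1)$ have the same $\mu+\nu=2n$. Subtracting the two series therefore leaves only the bracket
\begin{align*}
\frac{1}{\Gamma(k+n+1)^2}-\frac{1}{\Gamma(k+n)\Gamma(k+n+2)}=\frac{1}{\Gamma(k+n+1)\Gamma(k+n+2)}.
\end{align*}
This holds because $(k+n+1)-(k+n)=1$ after putting both terms over the common denominator $\Gamma(k+n+1)\Gamma(k+n+2)$. Hence
\begin{align*}
T_n(x)=\sum_{k\ge0}\frac{\Gamma(2k+2n+1)\,(x/2)^{2k+2n}}{k!\,\Gamma(k+2n+1)\,\Gamma(k+n+1)\,\Gamma(k+n+2)}.
\end{align*}

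The three items then follow from this series.
\begin{enumerate}
\item For $n>-1$ and $x>0$, every coefficient is strictly positive, so $T_n(x)>0$.
\item Compare termwise with the series for $I_n^2$. The $k$-th coefficient of $T_n$ equals that of $I_n^2$ times $\frac{\Gamma(k+n+1)}{\Gamma(k+n+2)}=\frac{1}{k+n+1}\le\frac1{n+1}$. All terms are nonnegative, which gives $T_n\le I_n^2/(n+1)$ for $x\ge0$. For integer $n$, the parity $I_n(-x)=(-1)^nI_n(x)$ makes both sides even in $x$, so the bound extends to all $x\in\bR$.
\item For monotonicity in $n$, I would reindex by $j=k+n$ (for integer $n$), which gives
\begin{align*}
T_n(x)=\sum_{j\ge n}\frac{(2j)!}{j!\,(j+1)!}\cdot\frac{(x/2)^{2j}}{(j-n)!\,(j+n)!}.
\end{align*}
For fixed $j$, the factor $\frac{1}{(j-n)!(j+n)!}=\binom{2j}{j+n}/(2j)!$ is decreasing in $n\ge0$ because of the unimodality of binomial coefficients. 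In addition, the range $j\ge n$ shrinks as $n$ grows. Since all terms are nonnegative for $x>0$, $T_n(x)$ decreases in $n$.
\end{enumerate}

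The main obstacle is item (3) for real, non-integer $n\in[0,\infty)$. There the reindexing produces Gamma functions, and one needs $n\mapsto\frac{1}{\Gamma(j-n+1)\Gamma(j+n+1)}$ to be nonincreasing for $n\ge0$ (with the convention $1/\Gamma=0$ at poles). I would try to get this from the log-convexity of $\Gamma$: the derivative of $\log\Gamma(j+n+1)+\log\Gamma(j-n+1)$ in $n$ is $\psi(j+n+1)-\psi(j-n+1)\ge0$, since $\psi$ is increasing. That case is not needed for the integer applications in the paper, so I would keep it as a remark.
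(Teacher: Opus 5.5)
Your proof is correct for the statement as posed, and it takes a different route from the paper. The paper gives no argument of its own; it imports all three facts from the literature on Tur\'anians of modified Bessel functions, where they are proved for real order. The lemma defines $T_n$ only for $n\in\bZ$, and the paper defines $I_n$ only for integer order (via the Fourier integral), so your integer-order argument covers the whole statement. The Neumann product series, together with the cancellation between the pairs $(n,n)$ and $(n-1,n+1)$, gives the explicit positive series
\[
T_n(x)=\sum_{k\ge0}\frac{(2k+2n)!\,(x/2)^{2k+2n}}{k!\,(k+2n)!\,(k+n)!\,(k+n+1)!}.
\]
Everything then follows from this series:
\begin{itemize}
\item positivity is immediate;
\item the bound $T_n\le I_n^2/(n+1)$ follows from the coefficientwise ratio $1/(k+n+1)$;
\item strict monotonicity in $n$ follows from unimodality of $\binom{2j}{j+n}$ after setting $j=k+n$, plus the loss of the $j=n$ term.
\end{itemize}
The same series also gives the strict versions the paper actually uses in \cref{Kuramoto better estimates}, namely $T_1>T_2$ and $T_1<\tfrac12 I_1^2$ for $x\neq0$. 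The citation buys generality in the order; your argument buys a short, self-contained verification of exactly what the paper needs.

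Your closing remark on non-integer $n$ should be dropped or redone. For non-integer $n$, the series for $T_n$ and $T_{n'}$ involve different powers $x^{2k+2n}$ and $x^{2k+2n'}$. There is therefore no common coefficient at fixed $j$ to compare, and the digamma computation does not address this. For fixed $j$ and real $n$, the factor $1/\Gamma(j-n+1)$ is also not sign-definite.

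If you want real order $\nu\ge0$, use the Neumann integral
\[
I_\mu(x)I_\nu(x)=\frac{2}{\pi}\int_0^{\pi/2}I_{\mu+\nu}(2x\cos\theta)\cos((\mu-\nu)\theta)\,d\theta ,
\]
valid for $\mu+\nu>-1$. Applied to the pairs $(\nu,\nu)$ and $(\nu-1,\nu+1)$, it gives
\[
T_\nu(x)=\frac{4}{\pi}\int_0^{\pi/2}I_{2\nu}(2x\cos\theta)\sin^2\theta\,d\theta .
\]
This reduces item (3) to the monotonicity of $\nu\mapsto I_\nu(y)$ for $y>0$.
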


\section{Phase transitions in the Kuramoto-Daido model}\label{section 3}
In this section, we investigate phase transitions for the Kuramoto-Daido model. The main objective is the proof of \cref{phase theorem}.

Previous work \cite{chayes,delgadino2023phase} studied the continuity of phase transitions for general periodic McKean-Vlasov models, relating the critical interaction strength $K_c$ and the (dis)continuity of the phase transition to the linear stability threshold $K_{\#}$ for $\quni$. We will make use of the following criterion from \cite[Proposition 2.12]{chayes}.

\begin{prop} \label{phase trans continuity prop}
Suppose the model \eqref{MV equation} exhibits a phase transition at $K_{c},$ and let $K_{\#}$ be the linear stability threshold for the uniform distribution, as defined in \eqref{K sharp def}.
\begin{enumerate}
    \item If $q_{\operatorname{unif}}$ is the unique global minimizer of $\cF$ at $K=K_{\#},$ then $K_{c} = K_{\#}$ and the phase transition is continuous.
    \item If $q_{\operatorname{unif}}$ is not a global minimizer of $\cF$ at $K=K_{\#},$ then $K_{c} < K_{\#}$ and the phase transition is discontinuous.
\end{enumerate}
\end{prop}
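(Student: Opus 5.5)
The plan is to work throughout with the general interaction \eqref{W first assumption} and to use two soft facts. First, since $c_n\ge 0$, Parseval gives $\int q\,(W*q)\,d\theta = \pi\sum_{n\ge1} c_n |\hat q(n)|^2\cdot(\text{const})\ge 0$, with equality iff $\hat q(n)=0$ for every $n$ with $c_n>0$. Hence $K\mapsto \cF_K(q)$ is non-increasing for every $q$, and strictly decreasing whenever $q$ has a nonzero Fourier mode in the support of $(c_n)$. Second, every critical point satisfies $q = e^{2KW*q}/Z$. This gives uniform $C^k$ bounds on critical points for $K$ in compact sets (and the two-sided bound $e^{-4K\|W\|_\infty}\le q\le e^{4K\|W\|_\infty}$). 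Consequently, any family of minimizers $q_K$ with $K\to K'$ is precompact, and $(K,q)\mapsto\cF_K(q)$ is continuous along it.

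\emph{Part (1).} Assume $\quni$ is the unique minimizer at $K_\#$.
\begin{itemize}
\item \emph{No transition below $K_\#$.} For $K<K_\#$ and $q\ne\quni$, we have $\cF_K(q)\ge\cF_{K_\#}(q)>\cF_{K_\#}(\quni)=0=\cF_K(\quni)$. So $\quni$ is the unique minimizer for all $K\le K_\#$, and $K_c\ge K_\#$.
\item \emph{Transition above $K_\#$.} For $K>K_\#$, choose $n_*$ with $c_{n_*}=\max_n c_n$ and perturb $q_\eps=\quni(1+\eps\cos n_*\theta)$. Expanding to second order gives $\cF_K(q_\eps)=\frac{\eps^2}{4}\cdot\frac{1}{2}(1-Kc_{n_*})\cdot(\text{positive const})+O(\eps^3)<0$, so $\quni$ is not a minimizer and $K_c\le K_\#$.
\item \emph{Continuity.} Take any family of minimizers $q_K$ with $K\downarrow K_\#$. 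By compactness, every subsequential limit is a minimizer at $K_\#$. This uses $\cF_{K_\#}(\lim q_K)=\lim\cF_K(q_K)\le\lim\cF_K(p)=\cF_{K_\#}(p)$ for all $p$. By uniqueness the limit is $\quni$, which gives \eqref{eq:ptcont_def}.
\end{itemize}

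\emph{Part (2).} Assume $\cF_{K_\#}(p)<0$ for some $p$.
\begin{itemize}
\item \emph{Strict inequality $K_c<K_\#$.} Since $\cF_K(p)$ is affine in $K$, we still have $\cF_K(p)<0$ for $K$ slightly below $K_\#$, so $\quni$ fails to be a minimizer there.
\item \emph{Discontinuity.} Argue by contradiction. Suppose non-uniform minimizers $q_K$, $K\downarrow K_c$, converge to $\quni$; these exist by the definition of the supercritical regime. I would show that $\quni$ is an isolated critical point, uniformly for $K$ near $K_c$. To do so, work with $v=q-\quni$ (zero mean) and write the stationarity equation as the fixed-point problem $v=\Phi_K(v):=e^{2KW*v}/Z(v)-\quni$. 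Its linearization at $0$ is the Fourier multiplier $\hat v(n)\mapsto Kc_n\hat v(n)$, whose norm on $L^2_0$ is $K\max_n c_n=K/K_\#<1$ for $K$ near $K_c<K_\#$. A Taylor estimate of the exponential, with smoothing from $W*$, then shows that $\Phi_K$ is a contraction on a small ball $B_\rho\subset L_0^2$ for all $K$ in a neighborhood of $K_c$. Hence $v=0$ is the only fixed point in $B_\rho$. But $q_K-\quni\in B_\rho$ is a nonzero fixed point for $K$ close to $K_c$, a contradiction.
\item \emph{Remaining case: non-uniqueness at $K_c$.} The definition of discontinuity also covers failure of uniqueness at $K_c$. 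Taking limits of non-uniform minimizers as $K\downarrow K_c$ produces a non-uniform minimizer at $K_c$ alongside $\quni$, by the same compactness argument as in Part (1).
\end{itemize}

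The main obstacle is the uniform-in-$K$ local contraction estimate in Part (2). One must control the quadratic remainder of $e^{2KW*v}/Z(v)$ in $L^2$ with a constant independent of $K$ near $K_c$, and keep the linear part strictly below $1$. I would handle the remainder using $\|W*v\|_{L^\infty}\lesssim\|c\|_{\ell^2}\|v\|_{L^2}$ and the elementary bound $|e^x-1-x|\le x^2e^{|x|}$.
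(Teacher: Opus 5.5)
Your proof is correct. The paper gives no proof of this proposition: it imports it from \cite[Proposition 2.12]{chayes} and uses it as a black box. What you have written is therefore a self-contained substitute rather than a reconstruction of an argument in the text.

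Your route is the natural one.
\begin{itemize}
\item Part (1) rests on three ingredients. The first is the monotonicity of $K\mapsto\cF_K(q)$, which comes from $c_n\ge 0$ and Parseval; this is the same observation the paper uses in \cref{monotonicity of K_c}. The second is a second-order expansion at $\quni$ along the dominant mode. The third is compactness of minimizers via the Euler--Lagrange equation.
\item Part (2) rests on the fact that for $K<K_\#$ the self-consistency map is a local contraction at $\quni$, uniformly for $K$ near $K_c$: its linearization is the multiplier $Kc_n$, with norm $K/K_\#<1$. Hence no branch of non-uniform critical points can bifurcate from $\quni$ at $K_c<K_\#$. This is the mechanism behind the cited result. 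It is an infinite-dimensional, local version of the one-coordinate contractions the paper uses in \cref{search space shrink} and in the $m\ge 2$ part of \cref{big m phase trans}.
\end{itemize}
The citation keeps the paper short. Your argument makes explicit why the criterion is not a dichotomy. When $K_c=K_\#$ the linearization has norm exactly $1$, the contraction degenerates, and higher-order information is needed. That is precisely what the quartic expansion in \cref{G near origin} supplies in the proof of \cref{prop:m*} at $m=m_*$.

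Two small remarks.
\begin{itemize}
\item Your compactness step and the two-sided bound on $q$ need $W\in L^\infty$ with some modulus of continuity, not merely $(c_n)\in\ell^2$. This holds for the Kuramoto--Daido interaction and is assumed in the cited works. Your contraction estimate, by contrast, genuinely needs only $\|W*v\|_{L^\infty}\le\|W\|_{L^2}\|v\|_{L^2}$.
\item Your final bullet on non-uniqueness at $K_c$ is not needed for discontinuity. Continuity in the paper's sense is a conjunction, so it suffices that every family of non-uniform minimizers stays at $L^2$-distance at least $\rho$ from $\quni$ as $K\downarrow K_c$. That bullet does, however, give the stronger conclusion that in case (2) a non-uniform minimizer coexists with $\quni$ at $K_c$.
\end{itemize}
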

Remark that this criterion does not provide a perfect dichotomy for determining the continuity of phase transitions since $\quni$ may still be a non-unique minimizer at $K=K_{\#}$. Nevertheless, the Kuramoto-Daido model with $m \in (0, 1]$ precisely divides into these two regimes at $m = 1/2$. We prove this in  \cref{phase trans classification} below. We note that in our model
\begin{align}
    K_{\#}= \begin{cases}
        1, & \text{for } m \in (0, 1],\\
        m^{-1}, & \text{for } m >1.
    \end{cases}
\end{align}

\subsection{Parameter Reduction} \label{parameter reduction section} 
    
We reduce the number of Fourier coefficients necessary to describe the solution $q$. The reader will recall from \eqref{eq:rjsjdef} the notation $r_j,s_j$ the Fourier coefficients of $q$. As in \cref{ssec:introKD}, we use translation invariance to assume $s_2=0$. Then $(r_1,r_2,s_1)$ satisfy the self-consistency system 
\begin{equation} \label{original sc equation}
    \begin{aligned}
         &r_{j} = \frac{ \int \cos(j\theta)  \exp(2Kr_{1} \cos\theta + 2Kmr_{2} \cos2\theta+2Ks \sin \theta) \ d\theta}{ \int \exp(2Kr_{1} \cos\theta + 2Kmr_{2} \cos2\theta+2Ks_1 \sin \theta) \ d\theta}, \\
        &s_1 = \frac{ \int \sin\theta  \exp(2Kr_{1} \cos\theta + 2Kmr_{2} \cos2\theta+2Ks_1 \sin \theta) \ d\theta}{ \int \exp(2Kr_{1} \cos\theta + 2Kmr_{2} \cos2\theta+2Ks_1 \sin \theta) \ d\theta}.
    \end{aligned}
\end{equation} 
By applying several basic transformations, we show that stationary solutions can be parameterized by $r_1$ and $r_2$ while imposing $s_1=0$.

\begin{lemma} \label{s=0 lemma}
    All stationary solutions of \eqref{MV equation} can be written as either $q_{r_1, r_2}(\pi/2 -\theta)$ or $q_{r_1, r_2}(\theta - \varphi)$, where $(r_{1}, r_{2}) \in [-1, 1]^{2}$ satisfies the self-consistency equations \eqref{sc equation}.
\end{lemma}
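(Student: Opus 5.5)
The plan is to reduce everything to the exponential form of a stationary solution and then to show that, once $s_2=0$ is imposed by translation, the remaining sine coefficient $s_1$ can be removed by a sign argument.

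\emph{Setup.} Any stationary solution $q$ of \eqref{MV equation} is a critical point of $\cF$. Hence $\tfrac12\log q - K\, W*q$ is constant, which gives
\begin{align*}
q(\theta)\propto \exp\big(2K(r_1\cos\theta+s_1\sin\theta) + 2Km(r_2\cos 2\theta + s_2\sin 2\theta)\big),
\end{align*}
with $r_j,s_j$ as in \eqref{eq:rjsjdef}. Replacing $q$ by $q(\cdot+\varphi_0)$ rotates the phase of the second Fourier coefficient by $2\varphi_0$. We may therefore choose $\varphi_0$ so that $s_2=0$, keeping $r_2$ real and of either sign. We are then exactly in the setting of \eqref{original sc equation}. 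Write $x=2Kr_1$, $y=2Ks_1$, $z=2Kmr_2$ and $f(\theta)=x\cos\theta+z\cos2\theta$, so that $q=Z^{-1}e^{f(\theta)+y\sin\theta}$.

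\emph{Key step: $s_2=0$ forces $r_1s_1=0$.} The function $f$ is even and $\sin 2\theta$ is odd. Pairing $\theta$ with $-\theta$ gives
\begin{align*}
s_2=\frac1Z\int \sin2\theta\, e^{f(\theta)}\sinh(y\sin\theta)\,d\theta .
\end{align*}
Next pair $\theta$ with $\pi-\theta$. This substitution fixes $\sin\theta$ and $\cos2\theta$ and flips $\cos\theta$. Restricting to $\theta\in(-\pi/2,\pi/2)$, we obtain
\begin{align*}
s_2=\frac{2}{Z}\int_{-\pi/2}^{\pi/2} \sin\theta\,\sinh(y\sin\theta)\,\cos\theta\, e^{z\cos2\theta}\,\big(e^{x\cos\theta}-e^{-x\cos\theta}\big)\,d\theta .
\end{align*}
On this interval we have $\sin\theta\sinh(y\sin\theta)\ge0$, with strict inequality a.e.\ when $y\neq0$, and $\cos\theta>0$. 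Moreover $e^{x\cos\theta}-e^{-x\cos\theta}$ has the sign of $x$. So if both $x\neq0$ and $y\neq0$, then $s_2$ has the strict sign of $x$ and is nonzero, a contradiction. Since $K>0$, it follows that $r_1=0$ or $s_1=0$.

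\emph{Case analysis.} If $s_1=0$, then $q=q_{r_1,r_2}$, and integrating against $\cos j\theta$ gives exactly \eqref{sc equation}. (If $s_1=0$ already held before the translation, we get the form $q_{r_1,r_2}(\theta-\varphi)$ directly.) If instead $r_1=0$ and $s_1\neq0$, apply the reflection $\theta\mapsto\pi/2-\theta$. It sends $\sin\theta\mapsto\cos\theta$ and $\cos2\theta\mapsto-\cos2\theta$. Thus $\tilde q(\theta)\coloneqq q(\pi/2-\theta)$ has the exponent $2Ks_1\cos\theta-2Kmr_2\cos2\theta$. Its coefficients are $\tilde r_1=s_1$ and $\tilde r_2=-r_2$, and its sine coefficients vanish. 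Consequently $\tilde q=q_{\tilde r_1,\tilde r_2}$, and $(\tilde r_1,\tilde r_2)$ satisfies \eqref{sc equation}; self-consistency holds because the reflection maps $\cos\theta\leftrightarrow\sin\theta$ and $\cos2\theta\mapsto-\cos2\theta$ consistently on both sides. Since the reflection is an involution, $q=q_{\tilde r_1,\tilde r_2}(\pi/2-\theta)$. The translations used along the way are absorbed into $\varphi$. Finally, $|r_j|\le\int|\cos j\theta|\,dq\le1$, which gives $(r_1,r_2)\in[-1,1]^2$.

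\emph{Main obstacle.} The only nontrivial point is the sign computation showing that $s_2=0$ together with $r_1s_1\neq0$ is impossible. The double symmetrization $\theta\mapsto-\theta$, $\theta\mapsto\pi-\theta$ should make it clean; the rest is bookkeeping.
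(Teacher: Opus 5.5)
Your proof is correct, but it takes the constraint from a different equation of the self-consistency system than the paper does.

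\textbf{The paper's route.} It uses the $r_1$ and $s_1$ equations. Writing $r_1+is_1=Me^{i\varphi}$, it derives $\int\sin(\theta-\varphi)\,dq=0$, shifts by $\varphi$, and symmetrizes over the four quarter-periods. The resulting integrand is $\sin\theta\,\bigl(e^{2KM\cos\theta}-e^{-2KM\cos\theta}\bigr)\bigl(e^{2Kmr_2\cos(2\theta+2\varphi)}-e^{2Kmr_2\cos(2\theta-2\varphi)}\bigr)$, whose sign is governed by $r_2\sin 2\varphi$. This forces $M=0$, $r_2=0$, or $\sin2\varphi=0$. The case $r_2=0$, where the constraint is vacuous, must then be handled separately by a translation.

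\textbf{Your route.} You use the $s_2$ equation: once the translation has removed the $\sin2\theta$ term from the exponent, $\int\sin2\theta\,dq=0$ must still hold. The two reflections $\theta\mapsto-\theta$ and $\theta\mapsto\pi-\theta$ then give an integrand whose sign depends only on $r_1s_1$. So you get $r_1s_1=0$ in one step, for every $r_2$ including $r_2=0$. This needs no polar parametrization and leaves only two cases. Both arguments are equally elementary; yours has the simpler case structure.

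\textbf{A sign slip.} $\sin\theta\,\sinh(y\sin\theta)$ has the sign of $y$, so it is $\le 0$ when $y<0$. The integrand therefore has strict sign $\mathrm{sgn}(xy)$, not $\mathrm{sgn}(x)$. The conclusion $s_2\neq 0$ when $xy\neq0$ is unaffected.

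\textbf{The ``absorbed into $\varphi$'' remark.} In the reflected case the original solution is $q_{\tilde r_1,\tilde r_2}(\pi/2-\theta+c)$ for some translation $c$. This is not literally of the form $q_{\tilde r_1,\tilde r_2}(\pi/2-\theta)$. It becomes a pure translate only because $q_{\tilde r_1,\tilde r_2}$ is even, which gives $q_{\tilde r_1,\tilde r_2}(\pi/2-\theta+c)=q_{\tilde r_1,\tilde r_2}(\theta-c-\pi/2)$. State that evenness explicitly. It in fact shows that every stationary solution is a translate of some $q_{r_1,r_2}$; the paper's own write-up has the same looseness here.
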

\begin{proof}
Suppose $(r_1, r_2, s_1)$ solves \eqref{original sc equation}. Set $M \coloneqq \sqrt{r_1^{2} + s_1^{2}}$ and write $(r_1,s_1) = (M \cos \varphi, M\sin\varphi)$ for $\varphi \in [0, 2\pi)$. With this notation, we rewrite
    \begin{equation}
    \begin{aligned}
        &r_1 \cos \theta + s_1 \sin \theta = M( \cos \theta \cos \varphi + \sin \theta \sin \varphi) = M \cos(\theta- \varphi),\\
        &r_1 \sin\varphi - s_1 \cos \varphi = M (\cos\varphi \sin \varphi - \sin \varphi \cos \varphi) = 0.
    \end{aligned}
    \end{equation}
    From \eqref{original sc equation}, we have
    \begin{multline}
        r_{1} \sin \varphi \int \exp(2KM \cos(\theta-\varphi) + 2Km r_2 \cos2\theta) \ d\theta \\
        =  \int \sin \varphi \cos \theta \exp(2KM \cos(\theta-\varphi) + 2Km r_2 \cos2\theta) \ d\theta,
    \end{multline}
    \begin{multline}
        s_1 \cos \varphi \int \exp(2KM \cos(\theta-\varphi) + 2Km r_2 \cos2\theta) \ d\theta\\
        =  \int \cos \varphi \sin \theta \exp(2KM \cos(\theta-\varphi) + 2Km r_2 \cos2\theta) \ d\theta.
    \end{multline}
    Subtracting these two equations, we obtain
    \begin{align}
        I(M , \varphi)\coloneqq \int \sin(\theta- \varphi)  \exp(2KM \cos(\theta-\varphi) + 2Km r_2 \cos2\theta) \ d\theta = 0.
    \end{align}
    Changing variables $\theta \mapsto \theta - \varphi$, \begin{align}
        I(M , \varphi)= \int \sin\theta  \exp(2KM \cos\theta)  \exp( 2Km r_2 \cos(2\theta +2\varphi) ) \ d\theta.
    \end{align}
    
    Next, partition $[-\pi,\pi) = \bigcup_{i=-1}^2 [(i-1)\frac\pi2,i\frac\pi2]$ and decompose $I(M, \varphi) = \sum_{i=-1}^{2} I_{i}(M, \varphi)$. Making the changes of variable $\theta \to \theta+ \pi$ in $I_{-1}$, $\theta \to -\theta$ in $I_0$, and $\theta \to \pi - \theta$ in $I_2$, we obtain 
   \begin{multline}
       I(M, \varphi) = \int_{0}^{\pi /2} \sin \theta \ \big( \exp(2KM \cos \theta) - \exp(-2KM \cos \theta) \big) \\
         \times\big( \exp(2Kmr_{2} \cos( 2\theta + 2\varphi)) - \exp(2Kmr_{2} \cos(2\theta - 2 \varphi)) \big) \ d\theta. 
   \end{multline}
    Assuming $M >0$ and $r_2 \neq 0$, we now analyze the sign of $I(M, \varphi)$.
    
    If $\varphi \in (0, {\pi / 2})$, then for every $ \theta \in (0, {\pi / 2})$, $\cos \theta > - \cos \theta$ and $\cos(2\theta + 2\varphi) < \cos(2\theta - 2\varphi)$. Hence,
    \begin{align}
        I(M, \varphi) <0 \ \text{ if } \ r_2 >0,  \quad \text{and} \quad I(M, \varphi)>0 \ \text{ if } \ r_2 <0.
    \end{align}
    By similar arguments for $\varphi \in ({\pi / 2}, \pi) \cup  ({\pi}, 3\pi/2) \cup  ({3\pi / 2}, 2\pi)$, we find that $I(M, \varphi) \neq 0$ unless {at least one of the following holds:}
    \begin{align}
        M=0, \quad r_2 = 0, \quad \varphi \in \{0, {\pi / 2}, \pi, {3\pi / 2} \}.
    \end{align}
    Consider each case:
    \begin{enumerate}
        \item If $M=0$, then $r_1 = s_1=0$. 
        \item If $r_2 =0$, then 
        \begin{align}
            q(\theta) = \frac{ \exp(2KM \cos (\theta - \varphi))}{ \int \exp(2KM \cos (\theta - \varphi)) \ d\theta} \implies  q(\theta+\varphi) = \frac{ \exp(2KM \cos \theta)}{ \int \exp(2KM \cos \theta ) \ d\theta}.
        \end{align}
        Thus, $q$ is a translate of $q_{M, 0}$.
        \item If $\varphi \in \{0, \pi\},$ then $s_1=M\sin \varphi = 0$.
        \item If $\varphi \in \{\pi/2, 3\pi/2\},$ then $r_1 =0,$ and
        \begin{align}
             q(\pi/2 -\theta) =  \frac{ \exp(2Ks_1 \cos \theta + 2Km(-r_{2}) \cos2\theta) }{ \int \exp(2Ks_1 \cos \theta + 2Km(-r_{2}) \cos2\theta) \ d\theta},
        \end{align}
        where $q(\pi/2 -\theta) = q_{s_1, -r_{2}}(\theta).$
    \end{enumerate}
    In all cases, $q(\theta + \varphi),$ $q(\pi/2 - \theta),$ or $q$ itself can be expressed as $q_{r_1, r_2}$, for some $(r_1, r_2)$ solving \eqref{sc equation}, as claimed. 
\end{proof}

\medskip

From now on, we assume $s_1=0$ and $(r_1,r_2)$ solves the two-dimensional self-consistency equation \eqref{sc equation}. Under this assumption, the free energy \eqref{original free energy def} simplifies to
\begin{align}
    {\mathcal{F}}(q) &= \int q \log ({q \over \quni}) \ d\theta  - K \iint \big( \cos(\theta-\varphi) + m \cos(2\theta-2\varphi) \big) \ dq(\theta) dq(\varphi) \nonumber\\
    &=\int q \log q \ d\theta + \log(2\pi) - K r_1^{2} - Km r_2^{2}.
\end{align}
Substituting $q(\theta) = {Z^{-1}} {\exp(2Kr_1 \cos \theta+ 2Kmr_2 \cos2\theta)}$, where $Z$ is the normalizing constant, into $\mathcal{F}$, we obtain
\begin{align}
    \cF(q) &= 2K r_1 \int \cos \theta \ dq(\theta) + 2Km r_2 \int \cos2\theta \ dq(\theta) - \log ({Z \over 2\pi}) - Kr_1^{2} - Km r_2^{2} \nonumber\\
    &= Kr_1^{2} + Kmr_2^{2} - \log \Big({1 \over 2\pi}\int \exp(2Kr_1 \cos\theta+ 2Kmr_2 \cos2\theta) \ d\theta \Big).
\end{align}
We define 
\begin{align} \label{F def}
    F_{K, m}(r_1, r_2)\coloneqq Kr_1^{2} + Kmr_2^{2} - \log\Big({1 \over 2\pi} \int \exp(2Kr_1 \cos\theta+ 2Kmr_2 \cos2\theta) \ d\theta \Big),
\end{align}
which evidently satisfies $F(0, 0)=0.$ {Critical points of $F_{K,m}$ are exactly solutions of the self-consistency system.} This formulation allows us to restrict our analysis to the finite dimensional space $[-1, 1]^{2},$ significantly simplifying analysis. By considering the transformation $\theta \mapsto \pi - \theta,$ we observe the symmetry
\begin{align} \label{free energy symmetry}
F_{K, m}(r_1, r_2) = F_{K, m}(-r_1, r_2), \quad \forall r_1, r_2 \in [-1, 1].
\end{align}

\subsection{Continuous phase transition for $m \in [0, 1/2]$}\label{continuous phase trans section}
\cite{Onsager} showed there is a {continuous} phase transition at $K_{c}=1$ when $m\leq1/4$. Here, we extend their result to $m \leq 1/2$, which is the content of the main result of this subsection.

\begin{prop} \label{small m continuous theorem}
    If $m \leq {1 / 2}$, then a continuous phase transition occurs at $K_{c}=1$.
\end{prop}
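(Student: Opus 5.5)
By \cref{phase trans continuity prop}(1), and since $K_\#=1$ for $m\le 1/2$, it suffices to show that at $K=1$ the uniform distribution $\quni$ is the unique global minimizer of $\cF$. By the parameter reduction (\cref{s=0 lemma}), critical points correspond, up to translation and reflection, to critical points of $F_{1,m}$ on $[-1,1]^2$. So I will show that $(0,0)$ is the unique critical point of $F_{1,m}$, which implies it is the unique minimizer. Translates or reflections of $q_{r_1,r_2}$ are non-uniform whenever $(r_1,r_2)\neq(0,0)$, so uniqueness transfers back to $\cF$.

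I will split critical points into two classes. \emph{Case $r_1=0$.} Here $q_{0,r_2}$ depends only on $\cos 2\theta$. The second self-consistency equation becomes the Kuramoto equation in the doubled variable, $r_2 = I_1(2mr_2)/I_0(2mr_2)$, that is, \eqref{Kuramoto sc} with interaction strength $m\le 1/2<1$. By \cref{phase trans in Kura}(1), the only solution is $r_2=0$.

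\emph{Case $r_1\neq 0$.} By the symmetry \eqref{free energy symmetry}, take $r_1>0$. Eliminating $r_2$ is the key step. Write $x=2r_1$, $y=2mr_2$; the equations read $x/2 = I_1(x,y)/I_0(x,y)$ and $y/(2m) = I_2(x,y)/I_0(x,y)$. First I will show that $r_2>0$ whenever $r_1>0$, using the positivity $I_n(x,y)>0$ from \cref{I technical lemmas}(1) together with a sign argument for $y\le 0$. Then I will use the convexity inequalities \eqref{I property 1}--\eqref{I property 2} from \cref{I technical lemmas}. As in \cite{Onsager}, these give that the Hessian of $F_{1,m}$ restricted to the curve of solutions of the $r_2$-equation (the one-dimensional manifold of critical points referred to in the outline) has a definite sign, so the reduced one-variable function $r_1\mapsto F_{1,m}(r_1,r_2(r_1))$ has no critical point in $(0,1)$. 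Concretely, I expect the quantity
\[
\frac{I_1}{xI_0}\quad\text{compared with}\quad \frac{1}{2}
\]
along the curve to be strictly less than $1/2$ for $x>0$ when $K=1$. This is the analogue of $I_1(x)/I_0(x)<x/2$ in the Kuramoto case and is what rules out $x/2=I_1/I_0$. The term $y$ contributes through $\partial_y(I_1/I_0) = (\partial_{xy}I_0\,I_0 - I_1I_2)/I_0^2$, which \eqref{I property 1} bounds against the pure-$x$ convexity term. The constraint $m\le 1/2$ enters because this cross term carries a factor $2m$, and the bound closes precisely when $2m\le 1$.

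The main obstacle is the last step: carrying out the Onsager-type convexity estimate for $m\in(1/4,1/2]$. The original argument in \cite{Onsager} was stated only for $m\le 1/4$, so I must track constants carefully to see that the inequalities of \cref{I technical lemmas} suffice up to $m=1/2$. If the direct estimate fails near $m=1/2$, my fallback is to expand $F_{1,m}$ near the origin to fourth order; there the quartic coefficient is positive exactly when $2m\le1$, since the $r_1^2r_2$ coupling competes with the Kuramoto quartic term. I would then use the global estimate only away from a neighborhood of the origin.
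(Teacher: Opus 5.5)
Your reduction is sound and agrees with the paper where the two overlap. By \cref{phase trans continuity prop} it suffices to work at $K=K_\#=1$, and the case $r_1=0$ collapses to the Kuramoto equation at strength $m\le 1/2$.

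\textbf{The gap.} The case $r_1\neq0$ is the entire content of the statement, and you never prove it. You only say you \emph{expect} $I_1/(xI_0)<1/2$ along the curve $r_2=r_2^*(r_1)$, and you defer this as ``the main obstacle''. The mechanism you attribute to \cite{Onsager} is also not what is available. The input the paper uses is \cref{positive hessian in first}, i.e.\ \cite[Theorem 7.3]{Onsager}, which is stated for all $m\le 1/2$, so no constant-tracking from $1/4$ up to $1/2$ is needed. It gives positive definiteness of $\nabla^2F_{K,m}$ \emph{only at critical points} in $(0,1]^2$, not a sign along the whole curve, and by itself that excludes nothing. The paper converts it into uniqueness for all $K\le1$ by a global counting argument:
\begin{enumerate}
\item \cref{search space shrink}, a contraction argument (this is where $Km\le 1/2$ enters), puts every critical point on the even graph $r_2=r_2^*(r_1)\ge0$.
\item Every nonzero critical point is then a nondegenerate minimum (Morse index $0$).
\item Poincar\'e--Hopf on a large disk leaves only the origin.
\end{enumerate}
Your fallback cannot substitute for this. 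It is purely local, and the effective quartic coefficient of $r_1\mapsto F_{1,m}(r_1,r_2^*(r_1))$ is $\frac{1-2m}{4(1-m)}$. This \emph{vanishes} at $m=1/2$, so the fallback is inconclusive precisely at the endpoint.

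\textbf{How your route could be completed.} It needs an ingredient you do not mention: control of the slope of the curve. Set $y(x)=2mr_2^*(x/2)$ and let $h(x)=I_1/(xI_0)$ be evaluated at $(x,y(x))$. Then a critical point with $r_1>0$ means $h(2r_1)=1/2$, and $h(0^+)=1/2$. Write $c=\cov[\cos\theta,\cos2\theta]=\partial_y(I_1/I_0)>0$ and $v_2=\Var[\cos2\theta]$.
\begin{enumerate}
\item \eqref{I property 1} reads $\partial_x\big(I_1/(xI_0)\big)\le-\tfrac12c$, so $h'\le c\,(y'/x-\tfrac12)$.
\item Differentiating the $r_2$-equation gives $y'=2mc/(1-2mv_2)$.
\item \eqref{I property 2} reads $v_2<2h-2c/x$.
\end{enumerate}
Together these give $y'<x/2$, hence $h'<0$, at every point where $h\le1/(4m)$.

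For $m<1/2$ one has $h(0^+)=1/2<1/(4m)$. A continuity argument then yields $h<1/2$ on $(0,2)$, i.e.\ there is no critical point with $r_1>0$.

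At $m=1/2$ the same implication ($h\le1/2\Rightarrow h'<0$) reduces matters to excluding $h>1/2$ on some interval $(0,\delta)$. That exclusion holds because the reduced function would then be negative on $(0,\delta/2)$. This contradicts $F_{1,1/2}\ge0$, which is the limit of $F_{1,m}\ge0$ as $m\uparrow1/2$.

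Without an argument of this kind, the proposal does not establish the statement.
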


{To prove \cref{small m continuous theorem}, we show: (1) when $K\le 1$, $(r_1,r_2)=(0,0)$ is the unique critical point of $F_{K,m}$ hence the global minimizer; (2) when $K>1$, $\min F_{K,m} < 0$. \cref{phase trans continuity prop} then implies the continuity of the phase transition. Task (2), which is accomplished with \cref{lem:cpnonu}, is through perturbation expansion and not hard. Task (1), which is accomplished with \cref{lem:uniqCP}, is more difficult and requires a couple additional lemmas.}

\begin{lemma}\label{lem:cpnonu}
    If $K>1$ and $m\le {1/2}$, then there exists $(r_1,r_2)\in [-1,1]^2$ such that $F_{K,m}(r_1,r_2)<0$.
\end{lemma}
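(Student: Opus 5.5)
The simplest route is to take $r_2 = 0$ and $r_1 = \eps$ small. Then
\begin{align*}
F_{K,m}(\eps,0) = K\eps^2 - \log I_0(2K\eps),
\end{align*}
where $I_0$ is the one-variable Bessel function. This reduces the claim to the Kuramoto case, and no condition on $m$ is needed. I expand
\begin{align*}
\log I_0(x) = \frac{x^2}{4} - \frac{x^4}{64} + O(x^6),
\end{align*}
which follows from $I_0(x) = 1 + x^2/4 + x^4/64 + O(x^6)$. This gives
\begin{align*}
F_{K,m}(\eps,0) = K\eps^2 - K^2\eps^2 + \frac{K^4\eps^4}{4} + O(\eps^6) = K(1-K)\eps^2 + O(\eps^4).
\end{align*}
For $K>1$ the leading coefficient $K(1-K)$ is strictly negative, so $F_{K,m}(\eps,0)<0$ for all sufficiently small $\eps>0$. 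Such an $\eps$ can be taken in $(0,1]$, so the point lies in $[-1,1]^2$.

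The only things to check are the Taylor coefficients and the uniformity of the remainder for fixed $K$. Both are immediate, since $\log I_0$ is real-analytic near $0$ and $I_0(0)=1$.

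If one prefers a point that also moves $r_2$, in the spirit of the later construction for $m>1/2$, one can instead take $(r_1,r_2) = (\eps, c\eps^2)$ and expand $\log I_0(x,y)$ to fourth order. The quadratic term is $(K - K^2)\eps^2$ as before. The extra contributions at order $\eps^4$ then have a definite sign only when $m \le 1/2$, which is where the hypothesis would enter. This refinement is not needed for the lemma as stated: the $r_2=0$ slice already exhibits negativity for every $m\ge0$. I therefore expect no real obstacle. The main point to double-check is that $\log(\frac{1}{2\pi}\int e^{2K\eps\cos\theta}\,d\theta) = \log I_0(2K\eps)$ with the paper's normalization of $I_0$, which holds by definition.
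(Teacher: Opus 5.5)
Your proposal is correct and is essentially the paper's proof. The paper also evaluates $F_{K,m}$ on the slice $(\eps,0)$, where the $m$-dependence drops out, and expands $\log\big(\frac{1}{2\pi}\int e^{2K\eps\cos\theta}\,d\theta\big)=K^2\eps^2+O(\eps^3)$ to get $F_{K,m}(\eps,0)=K(1-K)\eps^2+O(\eps^3)<0=F_{K,m}(0,0)$; as you note, the hypothesis $m\le 1/2$ is not used.
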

\begin{proof}
For $\eps \in (0, 1/2)$, observe that
\begin{align}
        F_{K, m}(\eps, 0) = K \eps^{2} - \log \left( {1 \over 2\pi} \int \exp(2K\eps \cos \theta) \ d\theta \right) &= K \eps^{2} - \log \left( 1+ K^{2} \eps^{2} + O(\eps^{3}) \right) \nonumber\\
        &= K(1-K) \eps^{2} + O(\eps^{3}).
\end{align}
        By assumption $K>1$, we conclude that $F_{K, m} (\eps, 0) < F_{K, m}(0, 0)$.
\end{proof}

Having completed Task (2), we turn to Task (1). The first auxiliary lemma  we need shrinks the landscape by showing that the manifold of critical points is one-dimensional.

\begin{lemma} \label{search space shrink}
Suppose that $m \leq {1/2}.$ If $K\leq 1$, then for each $r_1 \in [-1, 1],$ there exists a unique $r_{2}^{*}=r_{2}^{*}(r_1)$ satisfying
\begin{align}
    r_2^{*} =  \frac{ \int \cos 2\theta \ \exp( 2Kr_1 \cos \theta + 2Km r_2^{*} \cos 2\theta) \ d\theta}{ \int \ \exp( 2Kr_1 \cos \theta + 2Km r_2^{*} \cos 2\theta) \ d\theta}.
\end{align}
Furthermore, $r_{2}^{*}(): [-1, 1] \to [-1, 1]$ is a nonnegative, even function.
\end{lemma}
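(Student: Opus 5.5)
For fixed $r_1\in[-1,1]$ we study the one-dimensional fixed-point problem $r_2 = G(r_2)$, where
\begin{align*}
G(t) \coloneqq \frac{\int \cos 2\theta \, \exp(2Kr_1\cos\theta + 2Kmt\cos 2\theta)\, d\theta}{\int \exp(2Kr_1\cos\theta + 2Kmt\cos 2\theta)\, d\theta} = \frac{I_2(2Kr_1, 2Kmt)}{I_0(2Kr_1,2Kmt)}.
\end{align*}
The plan has three steps. First show existence and uniqueness through a contraction or monotonicity argument. Then obtain evenness in $r_1$ from the symmetry $\theta\mapsto\pi-\theta$. Finally obtain nonnegativity from a sign argument at $t=0$ combined with monotonicity of $G$.

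\textbf{Existence and uniqueness.} Existence holds because $G$ is continuous on $[-1,1]$ and $|G|\le 1$. By the intermediate value theorem, $t\mapsto t-G(t)$ has a zero. For uniqueness, differentiate:
\begin{align*}
G'(t) = 2Km\,\mathrm{Var}_{q_t}(\cos 2\theta),
\end{align*}
where $q_t \propto \exp(2Kr_1\cos\theta+2Kmt\cos2\theta)$. This gives $G'\ge 0$, so $G$ is increasing. It therefore suffices to show $G'(t)<1$, and since $2Km\le 1$ under the hypotheses, it is enough to have $\mathrm{Var}_{q_t}(\cos2\theta) < 1$.

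The crude bound $\mathrm{Var}\le \bE[\cos^2 2\theta]\le 1$ gives only $G'\le 1$. Equality would force $\cos^2 2\theta\equiv 1$ and $\bE\cos2\theta=0$ $q_t$-a.s. This is impossible for a measure with a positive smooth density, so $\mathrm{Var}_{q_t}(\cos 2\theta)<1$ strictly, and hence $G'(t)<1$ on $[-1,1]$. Then $t\mapsto t-G(t)$ is strictly increasing, so the fixed point $r_2^*(r_1)$ is unique.

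This step is the main subtlety. The margin is only the strict inequality $\mathrm{Var}<1$, which is exactly why the threshold $2Km\le 1$, i.e.\ $m\le 1/2$ with $K\le1$, appears. If a quantitative bound were needed, I would first try $\mathrm{Var}(\cos2\theta)\le 1 - (\bE\cos 2\theta)^2 - \bE\sin^2 2\theta$. Strict monotonicity, however, suffices.

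\textbf{Evenness.} The change of variables $\theta\mapsto \pi-\theta$ sends $\cos\theta\mapsto-\cos\theta$ and fixes $\cos2\theta$. The fixed-point equation for $-r_1$ is therefore identical to that for $r_1$, and uniqueness gives $r_2^*(-r_1)=r_2^*(r_1)$.

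\textbf{Nonnegativity.} It suffices to show $G(0)\ge 0$. Then, since $t-G(t)$ is strictly increasing and $0-G(0)\le 0$, its zero satisfies $r_2^*\ge 0$. Now $G(0)=I_2(2Kr_1)/I_0(2Kr_1)$ in the one-variable notation, and $I_n$ is even in $x$ for even $n$. By the nonnegativity of $I_n(x)$ for $x\ge0$ from the Watson lemma above, $G(0)\ge0$. Continuity of $r_2^*$, which follows from the implicit function theorem using $1-G'>0$, is a by-product if needed later.
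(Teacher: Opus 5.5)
Your proof is correct and follows the paper's argument. In both, the key step is $G'(t)=2Km\,\mathrm{Var}_{q_t}(\cos 2\theta)<1$, which holds because $2Km\le 1$; the paper packages this as a contraction-mapping argument, while you use strict monotonicity of $t-G(t)$. Evenness comes from $\theta\mapsto\pi-\theta$ and nonnegativity from the sign of $I_2(2Kr_1)$ at $t=0$, as in the paper; your observation that $I_2$ is even, so that $G(0)\ge 0$ also for $r_1<0$, makes explicit a point the paper leaves implicit.
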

\begin{proof}
Fix $r_1 \in [-1, 1],$ and define the function
\begin{align}
    \psi_{r_1}(r_2)\coloneqq  \frac{ \int \cos 2\theta \ \exp( 2Kr_1 \cos \theta + 2Km r_2 \cos 2\theta) \ d\theta}{ \int \ \exp( 2Kr_1 \cos \theta + 2Km r_2 \cos 2\theta) \ d\theta}.
\end{align}
    
Differentiating $\psi_{r_1},$ we find
\begin{multline}
        \p_{r_2} \psi_{r_1}
        =2Km \left[\int \cos^{2}2\theta  q_{r_1, r_2}(\theta) d\theta - \Big(\int \cos2\theta   q_{r_1, r_2}(\theta) d\theta\Big)^2\right] = 2Km \Var_{\theta \sim q_{r_1, r_2}}[\cos 2\theta]
\end{multline}
where $q_{r_1, r_2}$ is as in \eqref{q def}. Hence,  $0 \leq \p_{r_2} \psi_{r_1} <1$ on $[-1,1]$. Since $\p_{r_2}\psi_{r_1}$ is continuous, it follows that $\|\p_{r_2}\psi_{r_1}\|_{L^\infty} < 1$. The mean-value theorem then implies that $\psi_{r_1}$ is a contraction mapping on $[-1,1]$, hence has a unique fixed point $r_2^*(r_1)$. 
By the symmetry of the free energy \eqref{free energy symmetry}, we conclude that $r_2^{*}$ is an even function in $r_1$.

To see that $r_2^* \ge 0$, note that $r_2^{*}$ is the unique solution to the equation
\begin{align}
    \Psi_{r_1}(r_2)=\int (r_2 - \cos2\theta)  \exp(2K r_1 \cos \theta+ 2Kmr_2 \cos2\theta) \ d\theta =0.
\end{align}
It is evident that that $\Psi_{r_1}(1) >0$ and 
\begin{align}
   \Psi_{r_1}(0) &= - \int \cos2\theta   \exp(2K r_1 \cos \theta) \ d\theta = - 2\pi I_{2}(2Kr_1) \leq 0, 
\end{align}
since $I_{2} \ge 0$. The continuity of $\Psi_{r_1}$ implies that the solution $r_2^{*} \in [0, 1)$.
\end{proof}

Using \cref{search space shrink}, we can show that $(r_1,r_2)=(0,0)$ is the unique critical point when $K\le 1$.

\begin{lemma}\label{lem:uniqCP}
If $m\le {1/2}$ and $K\le 1$, then $(r_1,r_2)=(0,0)$ is the unique critical point of the free energy $F_{K,m}$.
\end{lemma}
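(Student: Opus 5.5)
The plan is to use \cref{search space shrink} to reduce the two-dimensional critical point problem to a one-dimensional one along the curve $r_2=r_2^*(r_1)$, and then to show that the reduced energy is strictly convex. Fix $m\le 1/2$ and $K\le 1$, and let $(r_1,r_2)$ be any critical point of $F_{K,m}$. The equation $\partial_{r_2}F_{K,m}=0$ is exactly the fixed-point equation in \cref{search space shrink}, so by uniqueness of that fixed point we must have $r_2=r_2^*(r_1)$. We then set
\begin{align}
    f(r_1)\coloneqq F_{K,m}(r_1,r_2^*(r_1)),
\end{align}
which is even by \eqref{free energy symmetry} and the evenness of $r_2^*$. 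The critical points of $F_{K,m}$ are in bijection with those of $f$, via $r_1\mapsto (r_1,r_2^*(r_1))$. Note that $\partial_{r_2}^2F_{K,m}=2Km(1-\p_{r_2}\psi_{r_1})>0$, by the contraction bound in the proof of \cref{search space shrink}. The implicit function theorem therefore makes $r_2^*$ smooth, and since $f'(r_1)=\partial_{r_1}F_{K,m}(r_1,r_2^*(r_1))$,
\begin{align}
    f''(r_1)=\partial_{r_1}^2F_{K,m}-\frac{(\partial_{r_1r_2}F_{K,m})^2}{\partial_{r_2}^2F_{K,m}}\bigg|_{(r_1,r_2^*(r_1))}.
\end{align}
This is the Schur complement of the Hessian. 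If $f''>0$ on $(0,1]$ (and $f''(0)\ge 0$), then $f'$ is odd and strictly increasing, so $r_1=0$ is its only zero. This gives $(0,0)$ as the unique critical point, using $r_2^*(0)=0$, which follows since $I_2(0)=0$.

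The key step is to show that the Schur complement is positive. With $x=2Kr_1$ and $y=2Kmr_2^*\ge 0$, we compute
\begin{align}
\partial_{r_1}^2F=2K-4K^2\Big(\tfrac{\partial_{xx}I_0}{I_0}-\tfrac{I_1^2}{I_0^2}\Big),\quad
\partial_{r_1r_2}F=-4K^2m\,\tfrac{\partial_{xy}I_0 I_0-I_1I_2}{I_0^2},\quad
\partial_{r_2}^2F=2Km-4K^2m^2\,\tfrac{\partial_{yy}I_0I_0-I_2^2}{I_0^2}.
\end{align}
Positivity of the Schur complement is equivalent to $\partial_{r_1}^2F\,\partial_{r_2}^2F>(\partial_{r_1r_2}F)^2$. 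Since $K\le1$, we have $2K\ge 2K^2$ and $2Km\ge 2K^2 m$, which lets us replace the leading terms by their $K=1$ counterparts. I would then use the identity $\partial_{xx}I_0=\partial_x I_1$ and the critical point relation $r_1=I_1/I_0$ (so $x=2KI_1/I_0$) to rewrite $\partial_{r_1}^2F$ in the form appearing on the left of \eqref{I property 1}. Inequalities \eqref{I property 1} and \eqref{I property 2} of \cref{I technical lemmas} are precisely lower bounds of the diagonal Bessel combinations by the off-diagonal one $\partial_{xy}I_0I_0-I_1I_2>0$. Multiplying them should give the determinant bound, provided the factor $m$ multiplying the $\partial_{yy}$ term is controlled.

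The main obstacle is this last point. The lemma of \cite{Onsager} was tailored to $m\le 1/4$, and the factor $2m$ appearing in $\partial_{r_2}^2F$ must now be absorbed for $m\le 1/2$. To handle this, I would first try to exploit the restriction to the curve. There $r_2=r_2^*(r_1)\le r_1^2$-type bounds hold (from $I_2\le I_1$, \cref{turanian lemma}, and the monotonicity $n\mapsto I_n$), so $y$ is small relative to $x$. I would also keep the factor $1/x$ in \eqref{I property 2} rather than discarding it, using that $x=2Kr_1\le 2$. If the direct product argument falls short near $m=1/2$, the fallback is to argue by contradiction instead. At a putative critical point with $r_1>0$, one would compare $I_1(x,y)/I_0(x,y)$ with $x/(2K)\ge x/2$ and show $I_1/I_0<x/2$ for $0\le y\le m x^2/2$, expanding in $y$ and using the Kuramoto bound $I_1(x)/I_0(x)<x/2$.
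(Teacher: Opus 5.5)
\textbf{There is a genuine gap in your key step.} You aim to prove $f''>0$ on all of $(0,1]$, i.e.\ positivity of the Schur complement at every point $(r_1,r_2^*(r_1))$ of the curve. The device you propose for this is to rewrite $\partial_{r_1}^2F_{K,m}$ using $r_1=I_1/I_0$, i.e.\ $2K=xI_0/I_1$. That relation holds only at critical points: on the curve only the $r_2$-equation is satisfied, and $f'(r_1)=2K\big(r_1-I_1/I_0\big)$ vanishes exactly at critical points. Without it, \eqref{I property 1} and \eqref{I property 2} say nothing about $1-2K\Var[\cos\theta]$ or $1-2Km\Var[\cos 2\theta]$, because their left-hand sides contain $I_1/(xI_0)$ where the Hessian contains $1/(2K)$. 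So global convexity of $f$ is not established. Your preliminary replacement $2K\ge 2K^2$ also discards exactly the factor that must be matched with $xI_0/I_1$. The fallback is only heuristic. The bound $y\le mx^2/2$ on the curve is not proved, and a small-$y$ expansion cannot give a global inequality. Near the origin on $y=mx^2/2$ it reads $I_1/I_0=x/2-(1-2m)x^3/16+\dots$, which degenerates at $m=1/2$.

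\textbf{The missing idea is that global convexity is unnecessary}; it suffices that $f''>0$ at zeros of $f'$. For $K<1$ you have $f''(0)=2K(1-K)>0$, so $f'>0$ on some $(0,\epsilon)$. At the smallest zero $r^*\in(0,1]$ of $f'$ one would then have $f''(r^*)\le 0$, a contradiction once positivity at critical points is known. At a critical point the relation $2K=xI_0/I_1$ is available. Write $v_{11}=\Var[\cos\theta]$, $v_{22}=\Var[\cos2\theta]$, $v_{12}=\cov[\cos\theta,\cos2\theta]$ under $q_{r_1,r_2}$. Then \eqref{I property 1} and \eqref{I property 2} become
\[
1-2Kv_{11}\ge Kx\,v_{12}>0,\qquad 1-2Kmv_{22}\ge 1-Kv_{22}>\frac{2Kv_{12}}{x}.
\]
Multiplying gives $(1-2Kv_{11})(1-2Kmv_{22})>2K^2v_{12}^2\ge 4K^2mv_{12}^2$, i.e.\ $\det\nabla^2F_{K,m}>0$. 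Here $m\le 1/2$ is used exactly twice, so the ``$2m$ obstacle'' you anticipate does not arise. This is precisely \cref{positive hessian in first}. The paper uses it (citing \cite{Onsager}) together with \cref{search space shrink} and a Poincar\'e--Hopf index count on a large disk. Your reduction to $f$, once restricted to critical points, gives an elementary one-dimensional substitute for that count.

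The case $K=1$ needs a separate remark because $f''(0)=0$. A critical point with $r_1\neq 0$ at $K=1$ would have positive definite Hessian. By the implicit function theorem it would then persist for $K$ slightly below $1$, contradicting the case $K<1$.
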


To prove \cref{lem:uniqCP}, which will complete the proof of \cref{small m continuous theorem}, we  need the following result from \cite[Theorem 7.3]{Onsager} that shows nonzero critical points are local minimizers.

\begin{lemma} \label{positive hessian in first}
    Suppose $m \leq {1/ 2}$ and $K >0.$ Then at each critical point $(r_1, r_2) \in (0, 1]^{2}$ of $F_{K, m}$, we have $\nabla^{2}F_{K, m}(r_1, r_2) > 0$.
\end{lemma}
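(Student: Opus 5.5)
The plan is to write the Hessian of $F_{K,m}$ in terms of the two-variable Bessel functions $I_n(x,y)$ at $x=2Kr_1$, $y=2Kmr_2$. Then I use the self-consistency equations to eliminate $K$, and reduce positive definiteness to the inequalities of \cref{I technical lemmas}. The hypothesis $m\le 1/2$ enters in exactly two places, both through the bound $2Km\le K$.

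\emph{Hessian and elimination of $K$.} Set $G(x,y)=\log I_0(x,y)$. Then $F_{K,m}=Kr_1^2+Kmr_2^2-G(2Kr_1,2Kmr_2)$, and
\begin{align*}
\partial_{r_1r_1}F&=2K-4K^2\,\partial_{xx}G, \qquad
\partial_{r_2r_2}F=2Km-4K^2m^2\,\partial_{yy}G,\qquad
\partial_{r_1r_2}F=-4K^2m\,\partial_{xy}G .
\end{align*}
Here $\partial_{xx}G=V_{11}$, $\partial_{yy}G=V_{22}$ and $\partial_{xy}G=V_{12}$ are the variance of $\cos\theta$, the variance of $\cos 2\theta$ and their covariance under $q_{r_1,r_2}$. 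Explicitly, $V_{12}=(\partial_{xy}I_0\,I_0-I_1I_2)/I_0^2$ and $V_{22}=(\partial_{yy}I_0\,I_0-I_2^2)/I_0^2$. If $m=0$ the claim is the Kuramoto case, so assume $m>0$. Then $x,y>0$ and \cref{I technical lemmas} applies.

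Dividing the first row by $2K$ and the second by $2Km$, positive definiteness is equivalent to
\begin{align*}
a:=1-2KV_{11}>0,\qquad b:=1-2KmV_{22}>0,\qquad ab>4K^2m\,V_{12}^2 .
\end{align*}
At a critical point $r_1=I_1/I_0$, so $2K=xI_0/I_1$. Using $V_{11}=\partial_xI_1/I_0-I_1^2/I_0^2$, a direct computation gives
\[
a=\frac{I_1I_0+xI_1^2-xI_0\partial_xI_1}{I_0I_1}=\frac{x^2I_0}{2I_1}\,A,
\]
where $A$ is the left-hand side of \eqref{I property 1}. By \eqref{I property 1}, $A\ge V_{12}>0$, hence $a>0$.

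\emph{Bounding $b$.} Since $m\le 1/2$ we have $2Km\le K=xI_0/(2I_1)$. Because $V_{22}\ge 0$, this gives
\[
b\ge 1-\frac{xI_0}{2I_1}V_{22}=\frac{xI_0}{I_1}\Big(\frac{I_1}{xI_0}-\frac{V_{22}}{2}\Big)>\frac{xI_0}{I_1}\cdot\frac{V_{12}}{x}=\frac{I_0}{I_1}V_{12}>0,
\]
where the strict inequality is \eqref{I property 2}.

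\emph{Determinant.} Multiplying the two bounds gives
\[
ab>\frac{x^2I_0^2}{2I_1^2}\,A\,V_{12}\ge \frac{x^2I_0^2}{2I_1^2}V_{12}^2 .
\]
On the other hand, using $m\le 1/2$ again,
\[
4K^2mV_{12}^2=\Big(\frac{xI_0}{I_1}\Big)^2 mV_{12}^2\le \frac{x^2I_0^2}{2I_1^2}V_{12}^2 .
\]
Hence $ab>4K^2mV_{12}^2$, and $\nabla^2F_{K,m}(r_1,r_2)>0$.

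The main obstacle is the bookkeeping that identifies $a$ with $\tfrac{x^2I_0}{2I_1}A$. This uses the critical-point relation $K=xI_0/(2I_1)$ and the identity $\partial_x I_1=(I_0+I_2)/2$, i.e.\ differentiating under the integral. I would verify it carefully, since everything else is a direct application of \eqref{I property 1} and \eqref{I property 2}. Note that I only use the relation $r_1=I_1/I_0$. The relation $r_2=I_2/I_0$ is not needed once $2Km$ is bounded by $K$.
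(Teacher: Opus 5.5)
Your argument is correct and is essentially the intended one. The paper gives no proof of its own; it cites \cite[Theorem 7.3]{Onsager}, which rests on exactly the inequalities \eqref{I property 1} and \eqref{I property 2} of \cref{I technical lemmas}, translated via $2K=xI_0/I_1$ just as the paper does in \cref{E inequalities lemma}, with $m\le 1/2$ used to compare $2Km$ with $K$.

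One small correction concerns $m=0$. There $F_{K,0}$ does not depend on $r_2$, so its Hessian has a vanishing second row and column and is not positive definite. The lemma, and your proof, should therefore be read for $0<m\le 1/2$, rather than disposing of $m=0$ as the Kuramoto case.
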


\begin{proof}[Proof of \cref{lem:uniqCP}]
To begin, we compute the first and second derivatives of $F_{K,m}$:
\begin{align}
    \partial_{r_j} F_{K, m} = 2K \Big( r_j - \int \cos(j\theta) \ q_{r_1, r_2}(\theta) \ d\theta\Big)
\end{align}
and
\begin{equation} \label{hessian}
\begin{aligned} 
    &\partial_{r_1}^{2} F_{K, m} = 2K  \big(1 - 2K \Var_{\theta \sim q_{r_1, r_2}}[\cos \theta]\big),\\
    &\partial_{r_1} \partial_{r_2}F_{K. m} = -4K^{2}m \cov_{\theta \sim q_{r_1, r_2}}[ \cos\theta, \cos2\theta],\\
    &\partial_{r_2}^{2}F_{K, m} = 2Km \big(1 - 2Km \Var_{\theta \sim q_{r_1, r_2}}[\cos2\theta] \big) .
\end{aligned}
\end{equation}
Evaluating at $(r_1, r_2) = (0, 0)$, we obtain \begin{align}
    \nabla F_{K, m}(0, 0) = \begin{bmatrix}
        0 & 0
    \end{bmatrix}, \quad
    \nabla^{2} F_{K, m} (0, 0) = \begin{bmatrix}
        2K(1-K) & 0\\
        0 & 2Km(1-Km)
    \end{bmatrix}.
\end{align}
We note that the Hessian is strictly positive definite if $K<1.$

If $r_1$ or $r_2$ is zero, then \eqref{sc equation} reduces to the Kuramoto model, implying that the other coordinate must also be zero. In particular, if $r_2=0$, then
\begin{align}
r_1 = \frac{\int \cos\theta \exp\left(2Kr_1 \cos\theta \right) \, d\theta}{ \int \exp\left(2Kr_1 \cos\theta \right) \, d\theta},
\end{align}
which has the unique solution $r_1=0$ by \cref{phase trans in Kura}. Similarly, if $r_1 = 0,$ then
\begin{align}
    r_2 = \frac{\int \cos2\theta \exp\left(2Kmr_2 \cos2\theta \right) \, d\theta}{ \int \exp\left(2Kmr_2 \cos2\theta \right) \, d\theta}= \frac{\int \cos\theta \exp\left(2Kmr_2 \cos\theta \right) \, d\theta}{ \int \exp\left(2Kmr_2 \cos\theta \right) \, d\theta}.
\end{align}
This equation also reduces to the Kuramoto model, yielding the unique solution $r_2 = 0.$ It remains to prove that there is no solution $(r_1, r_2) \in (0, 1]^{2}.$

To this end, we apply the same reasoning as in \cite{Onsager}, using the Poincar\'e-Hopf formula on a big open disk which includes all of critical points. From \Cref{search space shrink,positive hessian in first}, the indices of the non-zero critical points are $0$, which implies the uniqueness of a critical point at $(0, 0)$.
\end{proof}

\subsection{Discontinuous phase transition for $m \in (1/2, 1]$}

In this subsection, we show $m=1/2$ is the maximal value of $m$ for which $K_{c}(m)=1$ holds. Previously, we used a perturbation argument to establish the non-uniqueness of critical point in the supercritical regime $K>1$ when $m \leq 1/2.$ For $m > 1/2,$ we construct a small $(r_1, r_2)$ as a function of $m$ that results in a negative free energy at some $K<1.$ A byproduct of our argument is an upper bound {(almost certainly not sharp)} of $K_{c}(m)$ when $m > 1/2.$ 

\begin{lemma} \label{m near half upper}
If $m>1/2$, then
\begin{align}
        K_{c}(m) <  1 - {36 \over 142897}  \frac{(m^{2} - 1/4)^{2}}{m^{8}}.
\end{align}
\end{lemma}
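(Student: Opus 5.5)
We exhibit, for each $m>1/2$ and each $K$ slightly below $1$ (namely $K = 1-\delta$ with $\delta$ smaller than the stated quantity), a point $(r_1,r_2)$ with $F_{K,m}(r_1,r_2)<0 = F_{K,m}(0,0)$. Then $\quni$ is not a global minimizer at this $K$, so $K_c \le K$. Letting $\delta$ approach the stated threshold, and using that strict negativity is an open condition in $K$, gives the strict inequality.

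Following the outline in the introduction, we take the ansatz $(r_1,r_2) = (\eps^{1/2}, c\,\eps)$ with $c>0$ to be optimized (the natural choice is $c$ of order $K/m$ or simply $c=1$), and Taylor expand $F_{K,m}$. Write $x = 2K r_1$ and $y = 2Kmr_2$, and set $\Lambda(x,y) \coloneqq \log I_0(x,y)$. Expanding $\exp(x\cos\theta + y\cos 2\theta)$ and averaging over $\bT$ (using $\langle\cos^2\theta\rangle = 1/2$, $\langle \cos^2\theta\cos2\theta\rangle = 1/4$, $\langle\cos^4\theta\rangle = 3/8$, $\langle\cos^22\theta\rangle=1/2$) gives
\begin{align*}
I_0(x,y) = 1 + \tfrac{x^2}{4} + \tfrac{x^2 y}{8} + \tfrac{y^2}{4} + \tfrac{x^4}{64} + R,
\end{align*}
so that
\begin{align*}
\Lambda = \tfrac{x^2}{4} + \tfrac{y^2}{4} + \tfrac{x^2y}{8} - \tfrac{x^4}{64} + R'.
\end{align*}
Substituting, the leading terms at order $\eps^2$ (since $r_1^2 = \eps$ only contributes $K(1-K)\eps$) give
\begin{align*}
F = K(1-K)\eps + \Big[Km c^2 - K^2m^2c^2 - K^3 m c + \tfrac{K^4}{4}\Big]\eps^2 + O(\eps^{5/2}).
\end{align*}
Optimizing over $c$ at $K=1$, we take $c = \frac{1}{2(1-m)}$ formally; more carefully, at $K=1$ the bracket equals $m(1-m)c^2 - mc + \tfrac14$. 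Its minimum over $c$ is negative exactly when the discriminant $m^2 - m(1-m) = m(2m-1)$ is positive, i.e. when $m>1/2$ (for $m\ge1$ the bracket is unbounded below). This is the mechanism producing the threshold $m=1/2$, and the quantity $m^2-1/4$ in the statement reflects it. We therefore fix $c = c(m)$ (say $c$ proportional to $1/m$, chosen so that the bracket is at most $-\kappa\,(m^2-1/4)/m^{4}$, uniformly also for large $m$), and take
\begin{align*}
\eps \approx \frac{\kappa\,(m^2-1/4)}{m^4}.
\end{align*}
Then $F \le (1-K)\eps - \kappa' \eps^2 + \text{error}$, which is negative once $1-K < c'\,\eps$, i.e. for $1-K$ of order $(m^2-1/4)^2/m^8$.

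The main obstacle is making this quantitative. We need an explicit bound on the remainder $R'$ of the Taylor expansion of $\log I_0(x,y)$ at order $\eps^{5/2}$ and beyond, valid for all $m>1/2$ including large $m$, where $y = 2Kmc\,\eps$ must remain small. We would control it by bounding the series $\sum_k \langle (x\cos\theta+y\cos2\theta)^k\rangle/k!$ termwise by $(|x|+|y|)^k/k!$ and using $\log(1+u) \ge u - u^2/2$ for $u\ge0$. This requires choosing $\eps$ small enough, via the explicit constants, that the remainder is at most half the negative $\eps^2$ term. Tracking these explicit numerical constants is what produces the specific value $36/142897$; the remaining steps are routine algebra, with a final check that $r_1, r_2 \in [-1,1]$.
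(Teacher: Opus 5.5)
Your overall strategy matches the paper's: take the test point $(\eps^{1/2},c\eps)$, expand $F_{K,m}$, and make it negative at $K=1-\delta$. The gap is in the remainder control you propose, which destroys the quantitative conclusion.

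You bound $\langle(x\cos\theta+y\cos2\theta)^k\rangle/k!$ by $(|x|+|y|)^k/k!$ for the uncomputed $k\ge 5$. This ignores the cancellation of odd powers of $\cos\theta$. With $x=2K\eps^{1/2}$ it leaves an error of genuine order $\eps^{5/2}$ (the $k=5$ term alone is about $(2K)^5\eps^{5/2}/120$), exactly as your $O(\eps^{5/2})$ says.

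Against a gain $\kappa\eps^2$ with $\kappa\sim m^2-\tfrac14$, this is fatal as $m\to\tfrac12^+$:
\begin{itemize}
\item At your choice $\eps\approx\kappa(m^2-\tfrac14)/m^4$, one has $\eps^{5/2}/\bigl((m^2-\tfrac14)\eps^2\bigr)\to\infty$, so $F<0$ is not shown.
\item In general, with an error $A\eps^{5/2}$, making $F<0$ requires $\delta\lesssim\kappa\eps-A\eps^{3/2}$. Its maximum over $\eps$ is $4\kappa^3/(27A^2)\sim(m-\tfrac12)^3$.
\item The lemma instead asserts $1-K_c>\frac{36}{142897}\frac{(m^2-1/4)^2}{m^8}\approx 0.064\,(m-\tfrac12)^2$ near $m=\tfrac12$.
\end{itemize}
The missing observation is that $\langle\cos^a\theta\cos^b2\theta\rangle=0$ for odd $a$, by invariance under $\theta\mapsto\theta+\pi$. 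Hence the expansion runs in integer powers of $\eps$ and the error is $O(\eps^3)$.

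The paper never estimates a tail at all. It bounds $e^t$ pointwise from below by a truncated Taylor polynomial and averages that exactly; every surviving term is even in $x$ and nonnegative. It then applies $\log(1+t)>t-t^2/2$. The only error is the explicit polynomial $R_{K,m}(\eps)\le C_0\eps^3K^4m^8$ with $C_0=142897/288$, obtained from $\eps<1$, $K<1$, $2m>1$.

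Second, the lemma is an explicit inequality, and your proposal never produces a constant. With your choices of $c$ ($\propto 1/m$ or $1/(2(1-m))$), the bookkeeping would not give $36/142897$. You would have to verify that whatever you obtain dominates $\frac{36}{142897}\frac{(m^2-1/4)^2}{m^8}$ for every $m>\tfrac12$. In the paper, $36/142897=1/(8C_0)$ comes from three choices:
\begin{itemize}
\item $c=1$. No optimization over $c$ is needed, since at $K=1$ your bracket equals $\tfrac14-m^2<0$ precisely when $m>\tfrac12$.
\item Absorbing the $K$-dependence of the $\eps^2$ coefficient at $K=1-\delta$ into a factor $\delta\bigl(1+\eps(2m^2+2m-\tfrac12)\bigr)$. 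Your line $F\le(1-K)\eps-\kappa'\eps^2+\text{error}$ tacitly evaluates that coefficient at $K=1$.
\item $\eps=\alpha(m^2-\tfrac14)/(C_0m^8)$ with $\alpha=\tfrac12$. This gives $F_{1-\delta,m}(\eps^{1/2},\eps)<0$ at $\delta=\delta_{1/2}(m)$, and $\delta_{1/2}(m)>\frac{1}{8C_0}\frac{(m^2-1/4)^2}{m^8}$.
\end{itemize}
That last strict comparison is what yields the strict inequality. Proving negativity only for $\delta$ below the stated threshold and letting $\delta$ approach it, as you suggest, gives only a non-strict bound on $K_c$.
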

\begin{proof}
Let $\eps>0.$ Using the lower bound $e^t \geq \sum_{n=0}^4 \frac{t^n}{n!}$, we estimate 
\begin{multline}
    {1 \over 2\pi} \int \exp(2K{\eps}^{1/2} \cos \theta + 2Km \eps \cos2\theta) \, d\theta \geq 1+ \eps K^{2} + \eps^{2} \Big(K^2m^{2}+K^3m+{K^4 \over 4} \Big)\\
    +\eps^{3} \Big({K^{4} m^{2} \over 2} + {K^{5}m \over 3} \Big) +\eps^{4} \Big({K^{4}m^{4} \over 4} + {K^{5}m^{3} \over 2} \Big).
\end{multline}
Using the elementary inequality $\log(1+t) > t - {t^{2} \over 2}$ for $t>0,$ we obtain
\begin{multline}
    \log \Big( {1 \over 2\pi} \int \exp(2K{\eps}^{1/2} \cos\theta + 2Km \eps \cos2\theta) \, dx \Big) \\
    > \eps K^2 + \eps^{2} \Big(K^2 m^{2}+K^3 m-{K^4 \over 4} \Big) - R_{K, m}(\eps),
\end{multline}
where 
\begin{multline}
        R_{K, m} (\eps) \coloneqq \eps^{3} \Big( {1 \over 2}K^{4}m^{2} + {2 \over 3}K^{5}m + {K^{6} \over 4} \Big) + \eps^{4} \Big( {K^{4}m^{4} \over 4} + {K^{5} m^{3} \over 2} + {5 \over 4}K^{6}m^{2} + {7 \over 12}K^{7}m + {1 \over 32}K^{8} \Big)\\
        +\eps^{5} \Big({3 \over 4} K^{6}m^{4} + {4 \over 3}K^{7}m^{3} + {11 \over 24}K^{8}m^{2} + {1 \over 12}K^{9}m \Big)\\
        +\eps^{6} \Big( {K^{6}m^{6} \over 4} + {3 \over 4}K^{7}m^{5} + {11 \over 16} K^{8}m^{4} + {7 \over 24}K^{9}m^{3} + {1 \over 18}K^{10}m^{2} \Big)\\
        +\eps^{7} \Big({K^{8}m^{6} \over 8} + {K^{9}m^{5} \over 3} + {K^{10}m^{4} \over 6} \Big) + \eps^{8} \Big({K^{8}m^{8} \over 32} + {K^{9}m^{7} \over 8} + {K^{10}m^{6} \over 8} \Big).
\end{multline}
    Using $\eps <1, \ K < 1,$ and $1<2m,$ we bound the remainder term $R_{K, m}$ as \begin{align}
        R_{K, m}(\eps) \leq C_{0}  \eps^{3} K^{4} m^{8}, \quad \text{ where} \quad C_{0} \coloneqq \frac{142897}{288}.
    \end{align}
    Recalling the definition  \eqref{F def}  of $F_{K, m}$, we see that
    \begin{align}
    F_{K, m}(\eps^{1/2}, \eps) < \eps  K(1-K) + \eps^{2} \left( Km(1-K^{2}) + K^{2} \Big({K^2 \over 4} - m^{2}\Big) \right) + C_{0} \eps^{3} K^{4} m^{8}.
    \end{align}
    Defining $\delta\coloneqq1-K$ and using that $<1$, it follows that
    \begin{align}
        \nonumber F_{1-\delta, m}(\eps^{1/2}, \eps) &<  \eps \delta + \eps^{2} \left( 2m \delta  - \Big(m^{2} -{1 \over 4} \Big)+2\delta   \Big(m^{2} -{1 \over 4} \Big) \right) + C_{0} \eps^{3}m^{8}\\
        \label{last step} &=  \eps \left[ \delta  \left( 1 + \eps \Big(2m^{2} + 2m - {1 \over 2}\Big) \right) - \left( \Big(m^{2} - {1 \over 4}\Big) \eps - C_{0}m^{8} \eps^{2}\right) \right]. 
    \end{align}
    Now define \begin{align}
        \eps_{\alpha}(m) \coloneqq \alpha  \frac{m^{2} - {1 / 4}}{C_0 m^{8}}, \quad \delta_{\alpha}(m) \coloneqq \frac{ \alpha (1-\alpha) {(m^{2} - {1 / 4})^{2}} }{C_{0}m^{8} + \alpha  {(m^{2} - {1 /4})(2m^{2} + 2m - {1 / 2})} },
    \end{align}
    where $\alpha \in (0, 1]$ is chosen to ensure $\eps_{\alpha}(m), \ \delta_{\alpha}(m) \in (0, 1]$ for all $m > {1 / 2}.$ Since $(m^{2} - {1 / 4})^{2} < C_{0} m^{8}$ for all $m > {1 / 2},$ we can freely choose $\alpha$ in $(0, 1].$ Thus, for $K = 1 - \delta_{\alpha}(m),$ there exists some $\eps_{\alpha}(m) >0$ such that $F_{K, m}(\eps_{\alpha}^{1/2}, \eps_{\alpha}) < 0.$ Hence, setting $\alpha=1/2$,
    \begin{align}
        K_{c} < 1 - \delta_{\alpha}(m)\leq 1 - {1 \over 8C_{0}} \frac{ (m^{2} - {1 \over 4})^{2}}{m^{8}}.
    \end{align}
\end{proof}

Combining \Cref{small m continuous theorem,phase trans continuity prop},  we arrive at the following conclusion:

    \begin{enumerate}  \label{phase trans classification}
        \item If $m \in (0, 1/2],$ the model exhibits a continuous phase transition at $K_{c}=1.$
        \item If $m \in (1/2, 1],$ the model exhibits a discontinuous phase transition at some $K_{c}<1.$ 
    \end{enumerate}

{\subsection{Monotonicity of $K_{c}(m)$}
In this subsection, we show that $K_{c}(\cdot)$ is monotonically decreasing and continuous as a function of $m$. Also, using reasoning similar to \cref{continuous phase trans section}, we show a continuous phase transition when $m$ is sufficiently big by rescaling $\tilde{K} = {Km}$.}   

{
\begin{lemma} \label{monotonicity of K_c}
    $K_{c}(\cdot)$ is non-increasing and continuous.
\end{lemma}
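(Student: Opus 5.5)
Since the Kuramoto-Daido model exhibits a phase transition, I characterize the threshold variationally as
\[
K_c(m)=\inf\{K>0:\ \min_{[-1,1]^2}F_{K,m}<0\}.
\]
This works because minimizers of $\cF$ correspond to critical points of $F_{K,m}$ via \cref{s=0 lemma}, and $F_{K,m}(0,0)=0$. The plan is to prove monotonicity in both $K$ and $m$ directly from the formula \eqref{F def}, and then deduce continuity from joint continuity of $F$ together with compactness of $[-1,1]^2$.

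\textbf{Monotonicity in $K$ and $m$.} Rather than using $F_{K,m}$ in the $(r_1,r_2)$ coordinates, I work with the original free energy written as a function of $q$:
\[
\cF_{K,m}(q)=\tfrac12\int q\log(2\pi q)\,d\theta-\tfrac K2\bigl(|\hat q(1)|^2+m|\hat q(2)|^2\bigr)\cdot c,
\]
where $c>0$ is a normalization constant. In this form, for each fixed $q$ the map $(K,m)\mapsto\cF_{K,m}(q)$ is non-increasing in each variable. Consequently $\Phi(K,m):=\inf_q\cF_{K,m}(q)\le 0$ is non-increasing in $K$ and in $m$. Two consequences follow. First, the set $\{K:\Phi<0\}$ is an up-ray $(K_c(m),\infty)$; the endpoint is open provided $\Phi(K,m)<0$ persists under small decreases of $K$, which holds by the continuity established below. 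Second, if $m\le m'$ then $\Phi(K,m')\le\Phi(K,m)$, so $K_c(m')\le K_c(m)$. This gives the claim that $K_c(\cdot)$ is non-increasing.

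\textbf{Continuity.} First, $\Phi$ is jointly continuous. Writing $\Phi(K,m)=\min_{(r_1,r_2)\in[-1,1]^2}F_{K,m}(r_1,r_2)$ (the minimum over the compact square), $F$ is jointly continuous in $(K,m,r_1,r_2)$, and a minimum of a jointly continuous function over a fixed compact set is continuous. To go from continuity of $\Phi$ to continuity of $K_c$, I need \emph{strict} decrease beyond threshold. For $K>K_c(m)$ take a minimizer $q\ne\quni$ at parameter $K_c(m)+\eta/2$; its Fourier part $r_1^2+mr_2^2$ is strictly positive, so the interaction term strictly decreases the energy. This gives $\Phi(K_c+\eta,m)<0$, and more quantitatively $\Phi(K,m)<0$ for every $K>K_c(m)$.

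With this in hand, the two semicontinuity directions go as follows.
\begin{itemize}
\item \emph{Upper semicontinuity.} Given $m_n\to m$ and $K>K_c(m)$, we have $\Phi(K,m)<0$, so by continuity $\Phi(K,m_n)<0$ for large $n$. Hence $K_c(m_n)\le K$, and therefore $\limsup_n K_c(m_n)\le K_c(m)$.
\item \emph{Lower semicontinuity.} If $K<K_c(m)$, then $\Phi(K',m)=0$ for all $K'$ near $K$. I need $\Phi(K,m_n)=0$ for large $n$, and continuity of $\Phi$ alone only gives $\Phi(K,m_n)\to0$, which is not enough. Instead I use monotonicity in $m$: for $m_n\le m$, $\Phi(K,m_n)\ge\Phi(K,m)=0$, so $K_c(m_n)\ge K_c(m)$; combined with the previous bound this yields left-continuity. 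For $m_n\downarrow m$ I argue by contradiction. Suppose $K_c(m_n)\le K<K_c(m)$ for all $n$. Then there are minimizers $(r^n)\ne0$ with $F_{K',m_n}(r^n)\le0$ for some $K'<K_c(m)$. Pass to a limit $r^n\to r$, so that $F_{K',m}(r)\le 0$. If $r\ne0$, then since $F_{K'',m}(r)<F_{K',m}(r)$ for $K''>K'$, we get $\Phi(K'',m)<0$ for $K''\in(K',K_c(m))$, which contradicts the definition of $K_c(m)$. If $r=0$, I use the Hessian at the origin, which is positive definite when $K'<\min(1,m^{-1})$ by \eqref{hessian}. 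This shows $F_{K',m_n}>0$ on a fixed punctured neighborhood of $0$, which excludes $r^n\to0$.
\end{itemize}

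The main obstacle is this last case $r^n\to0$, i.e.\ small nontrivial minimizers escaping to the origin. The Hessian estimate above handles it because $K_c(m)\le K_\#(m)=\min(1,m^{-1})$, which follows from \cref{lem:cpnonu} and the rescaled analogue, so we always have $K'<K_\#$ there.
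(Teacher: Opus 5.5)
Your proof is correct in substance, but it is organized differently from the paper's. The monotonicity argument is the same as the paper's: pointwise monotonicity of $\cF_{K,m}(q)$ in $K$ and $m$. For continuity, the paper argues by contradiction at a putative jump point $m_0$ and handles each side with a single fixed test measure $q_K$. Its case (ii) lowers $m$ at fixed $q_K$, which is your upper-semicontinuity step. Its case (i) asserts, citing \eqref{Fc monotone}, that some $q_K$ satisfies $\cF_{K,m}(q_K)<0$ for all $m\in[m_0,m_0+\delta_K)$, including $m=m_0$. But \eqref{Fc monotone} only carries negativity from smaller to larger $m$, so that assertion is exactly where the difficulty lies: non-uniform minimizers for $m\downarrow m_0$ could a priori collapse onto $\quni$. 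Your route handles this directly. You get upper semicontinuity from continuity of $\Phi=\min_{[-1,1]^2}F_{K,m}$, and the lower bound from the left for free by monotonicity. For $m_n\downarrow m$, you combine compactness of the minimizers $r^n$ with strict local minimality of the origin when $K'<K_c(m)\le K_\#(m)$. This supplies the ingredient the paper's case (i) leaves unjustified. The price is that you work in the two-dimensional reduction and need the a priori bound $K_c\le K_\#$.

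Two points need tightening.

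First, at a general $r\in[-1,1]^2$ the map $K\mapsto F_{K,m}(r)$ is concave but not decreasing. For example, $F_{K,m}(\eps,0)=K(1-K)\eps^2+O(\eps^4)$ increases in $K$ for $K<1/2$. Its $K$-derivative equals $-(r_1^2+mr_2^2)$ only when $r$ is a critical point. In your case $r\neq0$ this is fine, because $F_{K',m}(r)=\lim_n\Phi(K',m_n)=\Phi(K',m)=0$. Hence $r$ is an interior minimizer of $F_{K',m}$. You should say so, or argue at the level of measures via $\cF_{K'',m}(q_r)<\cF_{K',m}(q_r)$.

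Second, your argument breaks down at $m=0$. There $F_{K,0}$ does not depend on $r_2$, so a limit $r=(0,r_2)$ with $r_2\neq0$ still corresponds to $\quni$. Moreover, the Hessian eigenvalue $2K'm_n(1-K'm_n)$ at the origin tends to $0$, so the Taylor expansion yields no uniform punctured neighborhood. Right-continuity at $m=0$ should instead be read off from \cref{small m continuous theorem}, which gives $K_c\equiv1$ on $[0,1/2]$. For $m>0$ your Hessian argument is uniform in $n$, as you claimed.
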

\begin{proof}
    The reader will recall the definition \eqref{original free energy def} of the free energy $\mathcal{F}_{K,m}$.
    Since the entropy is non-negative and each of the cosine integrals is non-negative, it follows that for $K_{1} \leq K_{2}$ and $m_{1} \leq m_{2},$
    \begin{align} \label{Fc monotone}
        \forall q \in \cP(\bT), \quad \cF_{K_{2}, m_{2}}(q) \leq \cF_{K_{1}, m_{1}}(q). 
    \end{align}

    To see that $K_c(\cdot)$ is non-increasing, assume the contrary, $i.e.$ there exist $m_{1}<m_{2}$ such that $K_{c}(m_1) < K_{c}(m_2).$ Since we always have $K_{c}(m) \leq 1,$ it follows that $K_{c}(m_1) < 1.$ Consequently, for $K_{*} \in (K_{c}(m_1), K_{c}(m_2)),$ there is a non-uniform minimizer $q_{*} \in \cP(\bT)$ of $\cF_{K_{*}, m_{1}}$ such that $\cF_{K_{*}, m_{1}}(q_*) < 0.$ Now appealing to the property \eqref{Fc monotone}, we see that \begin{align}
        0 > \cF_{K_{*}, m_{1}}(q_*) > \cF_{K_{*}, m_{2}}(q_*) \geq \min_{\cP(\bT)} \cF_{K_{*}, m_{2}}.
    \end{align}
    Thus, we find $K_{*} \geq K_{c}(m_2),$ which is a contradiction.

    For continuity, observe that since $K_c$ is monotone, it must be continuous except on an at most countable set. Moreover, the function can only have jump discontinuities. Suppose that $m_0$ is a jump discontinuity for $K_{c}(\cdot)$ so that $K_{c}(m_{0}^{+}) < K_{c}(m_{0}^{-}).$

    \begin{enumerate} [(i)]
        \item If $K_{c}(m_0) > K_{c}(m_{0}^{+}),$ then for any $K \in (K_{c}(m_{0}^{+}), K_{c}(m_{0})),$ \eqref{Fc monotone} implies that there exists $\delta_K >0$ and $q_K \in \cP(\bT)$ such that \begin{align}
            \forall m \in [m_0, m_{0} + \delta_K), \quad \cF_{K, m}(q_K) <0. 
        \end{align}
        Taking $m = m_0,$ this implies that $K_{c}(m_0) \leq K,$ which is a contradiction.
        \item On the other hand, if $K_{c}(m_0) < K_{c}(m_{0}^{-}),$ then for any $K \in (K_{c}(m_0), K_{c}(m_{0}^{-})),$ there exists $q_K \in \cP(\bT)$ such that 
        \begin{align}
        \cF_{K, m_0}(q_K) <0.
        \end{align}
        Let $\delta_K>0$ satisfy
        \begin{align}
        {K \delta_K \over 2} \int_{\bT^{2}} \cos(2(\theta - \phi)) \ dq_{K}^{\otimes 2}(\theta, \phi) < {|\cF_{K, m_{0}}(q_K)| \over 2}.
        \end{align}
        Consequently,
        \begin{align}
        \forall m\in [m_0 - \delta_K, m_0], \quad \cF_{K, m}(q_K) \leq {\cF_{K, m_0}(q_K) \over 2}<0.
        \end{align}
        This implies that for any $m \in [m_0 - \delta_K, m_0],$ we have $K_{c}(m) \leq K$ and consequently $K_{c}(m_0^{-}) \leq K,$ which is a contradiction.
    \end{enumerate}
    Combining the two previous cases, we have shown that $K_{c}(m_0^{-}) \leq K_{c}(m_0) \leq K_{c}(m_{0}^{+}),$ which contradicts the assumption that $m_0$ is a jump discontinuity point. Thus, $K_{c}(\cdot)$ is continuous function for every $m \geq 0,$ as claimed.
\end{proof}}

In the next proposition, we show ${(2(1+m))^{-1}}\le K_c(m)\le m^{-1}$. Obviously, this upper bound is only useful when $m\ge 1$. Moreover, when $m\ge 2$, we have equality as well as continuity of the phase transition. {Remark that when $m\le 0$, $K_c(m)=1$, meaning that there is a continuous phase transition. }



\begin{prop} \label{big m phase trans}
If $m>0$, then $ {(2(1+m))^{-1}} \le K_c(m)\le m^{-1}$. If $m\ge 2$, then $K_c(m)=m^{-1}$ and the phase transition is continuous.
\end{prop}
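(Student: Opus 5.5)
Three things need proving: the lower bound $K_c\ge (2(1+m))^{-1}$, the upper bound $K_c\le m^{-1}$, and, for $m\ge 2$, the equality $K_c=m^{-1}$ with a continuous transition.

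\emph{Lower bound.} I would show that $(0,0)$ is the unique critical point of $F_{K,m}$ when $2K(1+m)<1$, using the self-consistency equations \eqref{sc equation}. Write $G(r_1,r_2)$ for their right-hand side. By the variance formulas \eqref{hessian}, its Jacobian is
\[
2K\begin{bmatrix}\Var[\cos\theta] & m\,\cov[\cos\theta,\cos2\theta]\\ \cov[\cos\theta,\cos2\theta] & m\Var[\cos2\theta]\end{bmatrix}.
\]
Each variance and covariance is at most $1$ in absolute value, because $|\cos j\theta|\le1$. So the $\ell^\infty$ operator norm of the Jacobian is at most $2K(1+m)$ (row sums). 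When $2K(1+m)<1$, $G$ is a contraction on $[-1,1]^2$, hence has the unique fixed point $(0,0)$. It follows that $\quni$ is the unique minimizer, so $K_c\ge (2(1+m))^{-1}$. A sharper variance bound, e.g.\ $\Var[\cos\theta]\le 1/2$-type estimates, is not needed.

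\emph{Upper bound.} Fix $K>m^{-1}$ and test with $(r_1,r_2)=(0,\eps)$. As in \cref{lem:cpnonu},
\[
F_{K,m}(0,\eps)=Km\eps^2-\log I_0(2Km\eps)=Km(1-Km)\eps^2+O(\eps^3)<0
\]
for small $\eps$. Hence $K_c\le K$ for every $K>m^{-1}$, which gives $K_c\le m^{-1}$.

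\emph{Case $m\ge2$.} By \cref{phase trans continuity prop} it suffices to show that at $K=K_\#=m^{-1}$ the point $(0,0)$ is the unique critical point of $F_{K,m}$. I would in fact prove uniqueness for every $K\le m^{-1}$, mirroring \cref{lem:uniqCP} with the two modes swapped. Set $\tilde K=Km\le1$, so the coefficients become $2\tilde K m^{-1} r_1\cos\theta+2\tilde K r_2\cos2\theta$. Now $\cos2\theta$ carries the dominant weight and $\cos\theta$ carries the weight $1/m\le 1/2$. The steps are:
\begin{enumerate}
\item The analog of \cref{search space shrink}: for each $r_2$, the $r_1$-equation has a unique solution $r_1^*(r_2)$. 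This holds because $\partial_{r_1}G_1=2K\Var[\cos\theta]\le 2K\le 2/m\le1$. The inequality is strict once the variance is below $1$, which it is since $q$ is smooth, so the map is a contraction.
\item Solutions on the axes. If $r_1=0$, the equation for $r_2$ reduces to the Kuramoto equation \eqref{Kuramoto sc} with strength $\tilde K\le1$, so $r_2=0$ by \cref{phase trans in Kura}. If $r_2=0$, the equation for $r_1$ is Kuramoto with strength $K\le 1/2$, so $r_1=0$.
\item Off-axis critical points. Every critical point has $r_1^*(r_2)$ odd in... rather $r_1^*$ has a sign determined by symmetry. I would argue that any off-axis critical point has nonnegative Hessian, i.e.\ is a nondegenerate local minimum, and then apply the Poincar\'e--Hopf index count on a large disk as in the proof of \cref{lem:uniqCP}.
\end{enumerate}

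The main obstacle is step 3: the Hessian positivity of \cref{positive hessian in first} is stated only for $m\le 1/2$ with $\cos\theta$ dominant. Here the reversed problem has $\cos2\theta$ dominant, and its nonzero-$r_1$ critical points have a different structure. $\cos2\theta$ has period $\pi$, so the roles are not literally symmetric. I would first check whether the Bessel inequalities of \cref{I technical lemmas} transfer after swapping $x\leftrightarrow y$ roles. Failing that, I would use a direct argument: along the curve $r_1=r_1^*(r_2)$, the reduced function $r_2\mapsto F(r_1^*(r_2),r_2)$ should be shown to have $0$ as its only critical point, by comparing it with the Kuramoto free energy at strength $\tilde K\le1$ plus a perturbation of size $O(1/m)$, which is controllable for $m\ge2$.

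Granting uniqueness at $K=m^{-1}$, the first part of \cref{phase trans continuity prop} gives $K_c=m^{-1}$ and a continuous transition.
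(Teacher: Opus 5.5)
Your upper bound matches the paper's argument. Your lower bound is correct but takes a different route. You show the self-consistency map is an $\ell^\infty$-contraction on $[-1,1]^2$ when $2K(1+m)<1$, using row sums of its Jacobian. The paper instead proves $\nabla^2F_{K,m}>0$ on all of $[-1,1]^2$ for $K\le (2(1+m))^{-1}$, by treating the determinant condition as a quadratic in $K$. Both give the same threshold, and yours avoids the root bookkeeping. The strict inequality $K<(2(1+m))^{-1}$ is enough to conclude $K_c\ge (2(1+m))^{-1}$.

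The gap is in the case $m\ge 2$. Your step 3 is left open, and the routes you offer are only speculative. One is transferring \cref{positive hessian in first}, which is proved only for $m\le 1/2$ and $(r_1,r_2)\in(0,1]^2$, and then using Poincar\'e--Hopf. The other is an unquantified $O(1/m)$ comparison with the Kuramoto energy. So as written, $K_c(m)=m^{-1}$ and continuity are not established.

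The missing observation is one line and is already inside your step 1. For every fixed $r_2$, the value $r_1=0$ solves the $r_1$-equation, because $G_1(-r_1,r_2)=-G_1(r_1,r_2)$. Equivalently, $\exp(2\tilde K r_2\cos 2\theta)$ is invariant under $\theta\mapsto\pi-\theta$, which flips the sign of $\cos\theta$. Step 1 shows the fixed point $r_1^*(r_2)$ is unique, so $r_1^*(r_2)\equiv 0$. Your half-sentence about ``a sign determined by symmetry'' was pointing at this: symmetry plus uniqueness forces the value $0$.

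Hence every critical point lies on the axis $r_1=0$. There your step 2 reduces to Kuramoto at strength $\tilde K\le 1$ and gives $r_2=0$. No off-axis critical points exist, so no Hessian or index computation is needed. This is exactly how the paper concludes, and the first part of \cref{phase trans continuity prop} then applies as you say.
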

\begin{proof}
We first show the upper bound.  Set $\tilde{K} \coloneqq Km$. Rewriting the free energy in terms of $\tilde{K},$  \begin{align}\label{tilde_F def}
        \tilde{F}_{\tilde{K}, m} (r_1, r_2) = {\tilde{K} \over m} r_{1}^{2} + \tilde{K} r_{2}^{2} - \log \Big( I_{0}({2 \tilde{K}}m^{-1} r_{1}, 2\tilde{K} r_{2}) \Big).
\end{align}
    Similarly to \cref{small m continuous theorem},  let $(r_{1}, r_{2}) = (0, \eps)$ with sufficiently small $\eps>0,$ and $\tilde{K} > 1.$ Then, \begin{align}
        \tilde{F}_{\tilde{K}, m}(0, \eps) = \tilde{K} \eps^{2} - \log \Big( {1 \over 2\pi} \int \exp(2\tilde{K} \eps \cos(\theta)) \ d\theta \Big) =\tilde{K}(1-\tilde{K}) \eps^{2} + O(\eps^{3}).
    \end{align}
    Therefore, $ K_{c} \leq { \tilde{K} / m},$  and letting $K \to 1+$ we obtain the desired result. 

\medskip
We now show the lower bound.  We will show that for $K \leq (2(1+m))^{-1},$ the free energy $F_{K, m}$ is strictly convex in $[-1, 1]^{2}.$

From \eqref{hessian}, $\nabla^{2} F_{K, m}(r_{1}, r_{2}) >0$ is equivalent to
\begin{align}
        &1 - 2K  \Var[\cos \theta] >0, \label{eq:hesspd1}\\
         &(1 - 2K  \Var_{\theta }[\cos \theta])(1 - 2Km  \Var\cos 2\theta]) - 4K^{2}m  (\cov[\cos\theta, \cos 2\theta])^{2}>0. \label{eq:hesspd2}
\end{align}
where $\theta\sim q_{r_1,r_2}$.   The first inequality holds since $K<{1/2}$. For the second inequality, let us lighten the notation by setting
\begin{align}
        v_{11} \coloneqq \Var[\cos\theta], \quad v_{22} \coloneqq \Var[\cos2\theta], \quad v_{12} \coloneqq \cov[\cos\theta, \cos 2\theta].
\end{align}
We this notation, we rewrite \eqref{eq:hesspd2} as
\begin{align}\label{intermediate big m}
        4m \big(v_{11} v_{22} - v_{12}^2\big) K^{2}
        - 2(v_{11} +m v_{22}) K +1 >0.
\end{align}
A sufficient condition for \eqref{intermediate big m} is \begin{align}
            K < \left[v_{11} + mv_{22} + \sqrt{(v_{11} + m v_{22})^{2} - 4m(v_{11} v_{22} - v_{12}^{2})}\right]^{-1}.
        \end{align}
Thus, it suffices to show the expression inside the brackets is $<2(1+m)$.
By Cauchy-Schwarz, $v_{11} v_{22} - v_{12}^{2}>0$, which gives \begin{align}
             v_{11} + mv_{22} + \sqrt{(v_{11} + m v_{22})^{2} - 4m(v_{11} v_{22} - v_{12}^{2})} < 2(v_{11}+mv_{22}) \leq 2(1+m).
\end{align}

\medskip
Lastly, we show the equality and continuity of the phase transition when $m\ge 2$. By \cref{phase trans continuity prop}, it suffices to show that if $\tilde{K} = 1,$ then $(0, 0)$ is the unique critical point of $\tilde{F}_{\tilde{K}, m}.$

Let $\tilde{K}=1$.
Similarly to \cref{search space shrink}, for fixed $r_2 \in [-1, 1],$ define the map
\begin{align}
    \varphi_{r_2}(r_1) \coloneqq \frac{\int \cos \theta \exp \big( {2 \over m} r_{1} \cos \theta + 2 r_{2} \cos 2\theta \big) \ d\theta}{\int \exp \big( {2 \over m} r_{1} \cos \theta + 2 r_{2} \cos 2\theta \big) \ d\theta }
    \end{align}
and observe that
\begin{align}
        \varphi_{r_2}'(r_1) = {2 \over m} \Var_{\theta \sim \mu}[\cos \theta], \quad \mu(\theta) \coloneqq \frac{ \exp \big( {2 \over m} r_{1} \cos \theta + 2 r_{2} \cos 2\theta \big)} {\int \exp \big( {2 \over m} r_{1} \cos \theta + 2 r_{2} \cos 2\theta \big) \ d\theta }.
    \end{align}
    By our assumption $m \geq 2$ and the continuity of $\varphi_{r_2}'$, we have $\|\varphi_{r_2}'\|_{L^\infty}<1$, which implies that $\varphi_{r_2}$ is a contraction and has a unique fixed point $r_{1}^{*} = r_{1}^{*}(r_2)$. Since $r_1=0$ is also a fixed point of $\varphi_{r_2}'$, the uniqueness of $r_1^*$ implies that $r_1^*(r_2) = 0$ for every $r_2\in [-1,1]$.

    Finally, let $(r_1, r_2)$ be a critical point of $\tilde{F}_{, m}.$ The self-consistency equation \eqref{sc equation} gives $r_1 = \varphi_{r_2}(r_1),$ so $r_1 = 0.$  Moreover, \begin{align}
       2  \Big[r_{2} - \frac{I_{1}(2r_2)}{I_{0}(2r_2)} \Big] = \partial_{r_2} \tilde{F}_{1, m} (0, r_2) = 0.
    \end{align}
    Using \eqref{Kuramoto sc}, we have $r_2 = 0.$ As a result, $(0, 0)$ is the unique critical point. 
\end{proof}


{
\subsection{Clarification of $m_*$}\label{ssec:PTm*}
In this last subsection, we further investigate $K_{c}(m)$ for the remaining values $m \in (1, 2),$ identifying that there exists a critical $m_{*}$ that distinguishes whether $K_{c} = m^{-1}$ or not. We study the minimizer of the free energy $F_{K, m}(r_1, r_2)$ on the domain $[-1, 1]^{2}.$ For all $K, m>0,$ no minimizer can lie on the boundary $\{r_{1} = \pm 1\} \cup \{r_2 = \pm 1\},$ since $\cos(j\theta) = \pm 1$ $q_{r_1, r_2}$-a.s. cannot hold. 

The main result of this subsection is the following proposition. 

\begin{prop}\label{prop:m*}
    There exists $m_*  \in (1,2]$ such that the following hold.
\begin{enumerate}[(1)]
    \item\label{item:m*1} If $1<m<m_*$, then $K_c<m^{-1}$ and the phase transition is discontinuous.
    \item\label{item:m*2} If $m=m_*$, then $K_c=m^{-1}$ and the phase transition is discontinuous. 
    \item\label{item:m*3} If $m>m_*$, then $K_c=m^{-1}$ and the phase transition is continuous.
\end{enumerate}
\end{prop}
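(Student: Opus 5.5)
We define $m_*$ via the set where the transition is ``early'' and show that, on the regime $m\ge 1$, the only possible behaviours are those in items \ref{item:m*1}--\ref{item:m*3}. Throughout, $K_{\#}=m^{-1}$ for $m>1$, and \cref{big m phase trans} gives $K_c(m)\le m^{-1}$ for all $m>0$, with equality and continuity for $m\ge2$.

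Set
\[
A\coloneqq\{m\ge 1: \ \min_{[-1,1]^2}F_{m^{-1},m}<0\},
\]
the set of $m$ for which $q_{\operatorname{unif}}$ fails to be a global minimizer at $K=K_{\#}=m^{-1}$. At $m=1$ we have $K_{\#}=1$, and \cref{m near half upper} gives $K_c(1)<1$. Hence $\min F_{1,1}<0$, so $1\in A$, and by continuity of $(m,r)\mapsto F_{m^{-1},m}(r)$ the set $A$ contains a neighbourhood $[1,1+\eta)$ in $[1,\infty)$. By \cref{big m phase trans}, $A\cap[2,\infty)=\emptyset$.

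Next we show that $A$ is an interval $[1,m_*)$. Suppose $m_1<m_2$ with $m_2\in A$. By definition $K_c(m_2)<m_2^{-1}$, and since $K_c$ is non-increasing (\cref{monotonicity of K_c}), $K_c(m_1)\le K_c(m_2)<m_2^{-1}<m_1^{-1}$. So for $K\in(K_c(m_1),m_1^{-1})$ there is $q$ with $\cF_{K,m_1}(q)<0$. Using the monotonicity \eqref{Fc monotone} in $K$, we get $\cF_{m_1^{-1},m_1}(q)\le\cF_{K,m_1}(q)<0$, hence $m_1\in A$. Thus $A$ is an initial segment, and it is open because $F$ is jointly continuous and the minimum is attained in the interior. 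We set $m_*\coloneqq\sup A\in(1,2]$ and have $A=[1,m_*)$.

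The three items then follow.
\begin{enumerate}[(1)]
\item For $m\in(1,m_*)$, \cref{phase trans continuity prop}(2) gives $K_c<m^{-1}$ and a discontinuous transition.
\item For $m>m_*$, the function $F_{m^{-1},m}$ is nonnegative and $K_c=m^{-1}$. To invoke \cref{phase trans continuity prop}(1) we also need that $(0,0)$ is the \emph{unique} minimizer. Our plan is to argue by contradiction: if a nonzero minimizer $r$ with $F_{m^{-1},m}(r)=0$ existed, then for $m'\in(m_*,m)$ we would have $F_{m'^{-1},m'}(r)<0$. This would use the strict inequality in \eqref{Fc monotone} obtained by moving along $K=m^{-1}$. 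Along that curve $Kr_1^2$ increases while $Kmr_2^2$ is fixed, so this needs a short computation, perhaps after rescaling as in \eqref{tilde_F def}. Obtaining $F_{m'^{-1},m'}(r)<0$ would contradict $m'\notin A$.
\item At $m=m_*$, closedness of $\{\min F\ge 0\}$ gives $K_c=m_*^{-1}$. Approximating from the left, there are $m_n\uparrow m_*$ and minimizers $r^{(n)}$ with $F_{m_n^{-1},m_n}(r^{(n)})<0$. These minimizers should stay bounded away from $0$, since near the origin the Hessian at $(0,0)$ is strictly positive in $r_1$ (because $K=m^{-1}<1$) and only degenerate in $r_2$. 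Their limit is then a nonuniform minimizer at $m_*$, which yields discontinuity.
\end{enumerate}

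The main obstacle is the two nondegeneracy points. First, along the curve $K=m^{-1}$ the $r_2$ direction is degenerate. We must show that negative minimizers do not collapse to $(0,0)$ as $m_n\to m_*$. Our approach is a local expansion of $\tilde F_{1,m}(r_1,r_2)$ near the origin. In this expansion the quartic coefficient of $r_2$ is positive, as in the Kuramoto model, and the $r_1$ direction is coercive. Together these should show that $F\ge0$ in a fixed neighbourhood uniformly for $m$ near $m_*$. Second, we need the strict monotonicity used in item (2).
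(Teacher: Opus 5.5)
\textbf{There is a genuine gap in the step showing that $A$ is an initial segment.} You argue that, since $K_c$ is non-increasing, $m_1<m_2$ gives $K_c(m_1)\le K_c(m_2)$. That is backwards. \cref{monotonicity of K_c} gives $K_c(m_1)\ge K_c(m_2)$, which is only a \emph{lower} bound on $K_c(m_1)$, whereas you need the upper bound $K_c(m_1)<m_1^{-1}$. Monotonicity of $K_c$ alone cannot supply this, because the threshold $K_\#=m^{-1}$ moves as well.

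What you need is monotonicity of the free energy \emph{along the curve} $K=m^{-1}$. With $r_j,s_j$ as in \eqref{eq:rjsjdef},
\[
\cF_{m^{-1},m}(q)=\frac12\int q\log(2\pi q)\,d\theta-\frac{1}{2m}\big(r_1^2+s_1^2\big)-\frac12\big(r_2^2+s_2^2\big).
\]
For each fixed $q$ this is non-decreasing in $m$, and strictly increasing when $r_1^2+s_1^2>0$. Hence $\min_q\cF_{m^{-1},m}$, which has the same sign as $s(m)=\min G_m$, is non-decreasing in $m$, and $A=[1,m_*)$ follows at once.

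The paper encodes the same fact as $\partial_m G_m=r_1^2/m^2$ at critical points. Its own claim that continuity of $s$ gives ``$s<0$ on $(1,m_*)$, $s=0$ on $[m_*,2]$'' also tacitly needs this monotonicity. This is exactly the ``short computation'' you defer to the case $m>m_*$, so prove it once and use it in both places. Doing it for fixed $q$, rather than for fixed $(r_1,r_2)$ in the reduced $F$, is what makes it global in $m$: $G_m(r)$ at a fixed non-critical $r$ need not be monotone in $m$.

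\textbf{A second, smaller gap is in your treatment of $m>m_*$.} The strict monotonicity degenerates when the putative nonzero minimizer has $r_1=0$. Then $F_{m^{-1},m}(0,r_2)=r_2^2-\log I_0(2r_2)$ is independent of $m$, so no contradiction arises by moving $m$. This case must be excluded separately, as in the paper's case (i). It is the reduced Kuramoto free energy at $K=1$, which is strictly positive for $r_2\neq 0$, since $I_0(x)<e^{x^2/4}$ for $x\neq0$ (or by \cref{phase trans in Kura}).

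With these two repairs your argument coincides with the paper's. Your case $1<m<m_*$ follows via \cref{phase trans continuity prop}. Your case $m=m_*$ uses uniform positivity of $G_m$ on a punctured ball, for $m$ in a compact subinterval of $(1,2]$, plus compactness. You pass to limits of minimizers directly instead of arguing by contradiction, which is equivalent.

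When you carry out the local expansion, note that it contains the cubic cross term $-m^{-2}r_1^2r_2$. This term is not handled by ``$r_1$ coercive, $r_2^4$ positive'' alone. It has to be absorbed into $\frac{m-1}{m^2}r_1^2$ using the smallness of $r_2$, as in \cref{G near origin}.
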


To prove \cref{prop:m*}, it will be convenient to set
\begin{align}
        G_{m}(r_1, r_2) \coloneqq F_{m^{-1}, m} (r_1, r_2)= {r_1^{2} \over m} + r_{2}^{2} - \log \Big( {1 \over 2\pi} \int_{0}^{2\pi} \exp( {2 \over m} r_1 \cos \theta + 2r_{2} \cos 2\theta) d\theta \Big).
\end{align}
Let $s(m) \coloneqq \min \{G_{m}(r_1, r_2): (r_1, r_2) \in [-1, 1]^{2}\}.$  Combining \Cref{big m phase trans,m near half upper}, we have
\begin{align}
    s(m) \quad \begin{cases} \le 0, & {\forall m} \\ < 0, & {m=1}   \\ =0, & \forall m\ge 2.\end{cases} 
\end{align}
Define \begin{align}
        m_{*} \coloneqq \inf\{ m \leq 2: s(m) = 0 \}.
    \end{align}
From the continuity of $s(m),$ we observe $m_{*} \in (1, 2]$ and \begin{align}
        s(m) \quad  \begin{cases}
            <0 & \text{ if } m<m_{*},\\
            =0 & \text{ if } m\geq m_{*}.
        \end{cases}
    \end{align}
    
We will make use of the following lemma, which describes the behavior of $G_{m}$ near the origin. 

\begin{lemma} \label{G near origin}
For each $\alpha \in (0, 1)$, there exists $\eps = \eps(\alpha) >0$ such that
\begin{align}
    \forall m \in [1 + \alpha, 2], \qquad  G_{m}(r_1, r_2) >0, \quad \text{for } (r_1, r_2) \in B_{\eps}(0, 0) \setminus (0, 0), 
\end{align}
\end{lemma}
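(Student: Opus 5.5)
The plan is a uniform Taylor expansion of $G_m$ at the origin. Two facts will organize the remainder. First, $G_m$ is even in $r_1$ by \eqref{free energy symmetry}, so every term of the expansion that involves $r_1$ carries a factor $r_1^2$. Second, on the axis $r_1=0$ the function reduces to the Kuramoto free energy, $G_m(0,r_2)=r_2^2-\log I_0(2r_2)$, which vanishes to fourth order.

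Set $x=2r_1/m$ and $y=2r_2$. Expanding $\frac1{2\pi}\int\exp(x\cos\theta+y\cos2\theta)\,d\theta$ and using the moments $\frac1{2\pi}\int\cos^2\theta\cos2\theta\,d\theta=\frac14$ and $\frac1{2\pi}\int\cos^2\theta\cos^22\theta\,d\theta=\frac14$, I expect
\begin{align*}
\log I_0(x,y)=\frac{x^2+y^2}{4}+\frac{x^2y}{8}-\frac{x^4+y^4}{64}+O(|(x,y)|^5).
\end{align*}
Substituting gives
\begin{align*}
G_m(r_1,r_2)=\frac{m-1}{m^2}\,r_1^2-\frac{r_1^2r_2}{m^2}+\frac{r_1^4}{4m^4}+\frac{r_2^4}{4}+R_m(r_1,r_2),
\end{align*}
where the $r_2^2$ terms cancel exactly because $\tilde K=1$. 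The remainder satisfies $R_m=O(|r|^5)$ uniformly for $m\in[1+\alpha,2]$. This uniformity holds because $(r_1,r_2,m)\mapsto G_m(r_1,r_2)$ is real analytic on a neighbourhood of $\{0\}\times[1+\alpha,2]$, so Taylor remainders are controlled by constants uniform on that compact parameter set. For the quartic coefficients, only the exact coefficient $\frac14$ of $r_2^4$ matters; it can be read off from the Kuramoto expansion $r^2-\log I_0(2r)=r^4/4+O(r^6)$.

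Next I split the remainder using the structure noted at the start. Write $G_m(r_1,r_2)=G_m(0,r_2)+[G_m(r_1,r_2)-G_m(0,r_2)]$. The first piece equals $r_2^4/4+O(r_2^6)$. The second piece is even in $r_1$ and analytic, so it has the form $r_1^2H_m(r_1,r_2)$ with $H_m$ analytic and $H_m(0,0)=(m-1)/m^2$. Hence
\begin{align*}
G_m(r_1,r_2)\ \ge\ r_1^2\Big(\frac{m-1}{m^2}-C|r|\Big)+r_2^4\Big(\frac14-C r_2^2\Big),
\end{align*}
with $C$ uniform in $m\in[1+\alpha,2]$. Since $(m-1)/m^2\ge\alpha/4$ on this range, choosing $\eps(\alpha)$ small enough that $C\eps<\min(\alpha/4,1/4)$ makes both brackets positive on $B_\eps(0,0)$. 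The right-hand side is then strictly positive whenever $(r_1,r_2)\neq(0,0)$: if $r_1\neq0$ the first term is positive, and if $r_1=0$ then $r_2\neq0$ and the second term is positive.

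The main obstacle is making the constant $C$ genuinely uniform in $m$ and handling the degenerate $r_2$ direction. Because the quadratic form has no $r_2^2$ term, a naive $O(|r|^3)$ remainder cannot be absorbed. The decomposition into $G_m(0,r_2)$ plus $r_1^2H_m$ is exactly what avoids this: the cubic cross term $-r_1^2r_2/m^2$ and all other mixed terms are absorbed by the nondegenerate $r_1^2$ coefficient, which stays bounded below because $m\ge1+\alpha$. To bound $H_m$ and the sixth-order term in $G_m(0,r_2)$ uniformly, I would use the integral form of the Taylor remainder together with bounds on derivatives of $\log I_0(x,y)$ on a fixed compact neighbourhood. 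These bounds follow from $I_0\ge e^{-|x|-|y|}$ and from the fact that all derivatives of $I_0(x,y)$ are controlled by $e^{|x|+|y|}$.
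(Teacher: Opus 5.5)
Your proof is correct, but it organizes the estimate differently from the paper. The paper expands $G_m$ to fourth order, $G_m = P_m + R_m$ with $P_m = \frac{m-1}{m^2}r_1^2 - \frac{1}{m^2}r_1^2 r_2 + \frac{1}{4m^4}r_1^4 + \frac14 r_2^4$ and $R_m = O(|\fr|^5)$. It proves the explicit bound $P_m \ge \frac{m-1}{4m^2}(r_1^2 + r_2^4)$ on $|\fr| < (m-1)/2$ by a case split on whether $|r_2| > |r_1|^{1/2}$. It then absorbs $R_m$ using $|r_1|^5 \le \eps^3 r_1^2$ and $|r_2|^5 \le \eps r_2^4$.

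You instead split $G_m$ into two pieces. The first is the $m$-independent Kuramoto axis term $G_m(0,r_2) = r_2^4/4 + O(r_2^6)$. The second is $r_1^2 H_m(r_1,r_2)$, which exists because $\partial_{r_1}G_m(0,r_2)=0$ by the symmetry \eqref{free energy symmetry}. Concretely, $H_m = \int_0^1\!\int_0^1 t\,\partial_{r_1}^2 G_m(str_1,r_2)\,ds\,dt$, so $H_m(0,0) = \frac12\partial_{r_1}^2G_m(0,0) = (m-1)/m^2$.

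In your version the cubic cross term and all other mixed terms are absorbed by the continuity of $H_m$, so no case analysis is needed. You only need the single value $\partial_{r_1}^2 G_m(0,0)$ and the one-variable Bessel expansion, not the full two-variable quartic Taylor polynomial. Your expansion of $\log I_0(x,y)$, which agrees with the paper's $P_m$, is therefore only context. You also make explicit, via analyticity on a compact parameter set, that the remainder constants are uniform in $m\in[1+\alpha,2]$; the paper leaves this implicit.

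What the paper's route buys in exchange is an explicit quartic coercivity bound for $P_m$ with constant $\frac{m-1}{4m^2}$. That bound shows concretely the $r_1^2 + r_2^4$ scaling of $G_m$ near the origin.
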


Assuming \cref{G near origin}, let us prove \cref{prop:m*}.
\begin{proof}[Proof of \cref{prop:m*}]
If $m\in (1,m_*)$, then $F_{K,m}$ has a nonuniform global minimizer when $K=m^{-1}$. This implies assertion \ref{item:m*1}.

\medskip
For assertion \ref{item:m*3}, it suffices to show $G_m$ has the unique global minimizer $(0, 0).$ Assume the contrary, i.e. there exists global minimizer $(r_1, r_2) \neq (0, 0)$. Consider the following cases.
    \begin{enumerate}[(i)]
        \item If $r_1 =0,$ then
        \begin{align}
            G_{m}(0, r_2) = r_{2}^{2} - \log( I_{0}(2r_{2}))
        \end{align}
        cannot be equal to $0$ unless $r_2 = 0,$ which is a contradiction.
        \item If $r_1 \neq 0$, then
        \begin{align}
            \p_m G_{m}(r_{1}, r_{2}) &= -\frac{r_{1}^{2}}{m} + \frac{2 r_{1}}{m^{2}}  \frac{I_{1}(2r_{1}/m, 2r_{2})}{I_{0}(2r_{1}/m, 2r_{2})} ={r_{1}^{2} \over m^{2}} >0.
        \end{align}
Hence, there exists $m' \in (m_*, m)$ such that $G_{m'}(r_1, r_2) <0.$ This contradicts $s(m') =0.$
    \end{enumerate}
The two cases form dichotomy, completing the argument.

\medskip
Finally, consider assertion \ref{item:m*2}. \cref{monotonicity of K_c} implies that $K_{c}(m_*) = m_{*}^{-1}.$ Since $s(m_*) = 0$, the uniform distribution is a global minimizer of $G_{m_*}.$ We prove that the phase transition is discontinuous by showing the existence of non-unique global minimizers of $G_{m_*}$.

Assume the contrary, i.e. $G_{m_*}$ has the unique minimizer $(0, 0).$ Since $m_{*} \in (1, 2],$ we may apply \cref{G near origin} to obtain a neighborhood $U$ of $(0, 0)$ such that $G_{m} >0$ in $U \setminus (0, 0)$ for all $m \in [(1+m_*)/2, 2].$ Since $[-1, 1]^{2} \setminus U$ is compact and $G_{m_*}>0$ on this set, we can find $\delta>0$ such that for $ m \in (m_{*} - \delta, m_{*}],$ \begin{align}
        G_{m} >0 \quad \text{in } [-1, 1]^{2} \setminus (0, 0)
    \end{align}
    which contradicts $s(m)<0$ for all $m \in (1, m_{*}).$ 
\end{proof}

We pay our debt to the reader by proving  \cref{G near origin}.

\begin{proof}[Proof of \cref{G near origin}]
       
      For $\mathbf{r} = (r_1, r_2)$ near $(0, 0),$ we decompose $G_{m}(\fr) = P_{m}(\fr) + R_{m}(\fr)$, where
      \begin{align}
           P_{m}(\fr) \coloneqq \frac{m-1}{m^{2}} r_{1}^{2} - {1 \over m^{2}}r_{1}^{2}r_{2} + {1 \over 4m^{4}} r_{1}^{4} + {1 \over 4} r_{2}^{4}, \quad R_{m}(\fr) \coloneqq O( | \fr |^{5}).
       \end{align}
       First, we claim that there exists $\delta_{1}\coloneqq (m-1)/2$ such that for all $\delta< \delta_1,$ \begin{align}
           P_{m}(\fr) \geq {m-1 \over 4m^{2}} (r_1^{2} + r_{2}^{4}), \quad \forall | \fr| < \delta.
       \end{align}

       We fix $|\fr|<\delta$ so that $|r_1|, |r_2| < \delta < \delta_1.$ \begin{enumerate}[(i)]
           \item If $|r_2|> |r_1|^{1/2},$ then \begin{align}
               |r_1^{2} r_{2}| \leq |r_{2}|^{5} \leq \delta_{1} r_{2}^{4}.
           \end{align}
           Hence, \begin{align}
               P_{m}(\fr) \geq \frac{m-1}{m^{2}} r_{1}^{2} + \left({1 \over 4}  - {\delta_1 \over m^{2}}\right) r_{2}^{4} \geq {m-1 \over 4m^{2}} (r_1^{2} + r_{2}^{4}),
           \end{align}
           by choosing $\delta_1 = (m-1) /2.$
           \item On the other hand, if $|r_2| \leq |r_1|^{1/2},$ then \begin{align}
               |r_1^{2} r_{2}| \leq \delta_{1} r_{1}^{2},
           \end{align}
           and we obtain \begin{align}
               P_{m}(\fr) \geq \frac{m-1 - \delta_{1}}{m^{2}} r_{1}^{2} + {1 \over 4}  r_{2}^{4} \geq {m-1 \over 4m^{2}} (r_1^{2} + r_{2}^{4}).
           \end{align}
       \end{enumerate}

       Moreover, we find $\eps = \eps(\alpha)>0$ and a constant $C>0$ such that
       \begin{align}
       R_{m}(\fr) \leq C \eps (r_{1}^{2} + r_{2}^{4}), \quad \forall |\fr|< \epsilon,
       \end{align}
       since $|r_1|^{5} \leq \eps^{3} r_{1}^{2} $ and $|r_{2}|^{5} \leq \eps r_{2}^{4}.$ Combining the two estimates, we have
       \begin{align}
       G_{m}(\fr) \geq \left( {m-1 \over 4m^{2}} - C \eps \right) (r_{1}^{2} + r_{2}^{4}),
       \end{align}
       which finishes the proof.
       
   \end{proof}

}

\section{Linear stability analysis} \label{section 4}
Having verified the critical interaction strength $K_{c}$ in terms of $m$, we now investigate the stability of the stationary solution $q$. The goal of this section is the proof of \cref{stability thrm}.

In the following, we consider the supercritical regime $K>1$.
To access  \cref{exact multiplicities}, we will need to restrict to $m\le 1/4$. For technical reasons, we will further restrict to the smaller range on $m\le m_0$. We continue to let $(r_1(K, m), r_2(K, m))$ be the unique solution of the self-consistency equation \eqref{sc equation} in $(0, 1)^{2}$,
and let $q\coloneqq q_{r_1, r_2}$ be the associated probability measure. Following the notation of \cref{Section 2},
we write $\bar{r}_{1}\coloneqq \bar{r}_{1}(K) = r_1(K, 0)$ and $\bar{q} = q_{\bar{r}_1,0}$. The reader will also recall the function space notation from \cref{ssec:intro_contib}.

\subsection{Outline of proof}\label{ssec:stab_out}
Before diving into the proof, let us outline the overall argument. Our goal is to establish a lower bound for the Dirichlet form
\begin{align}\label{eq:DFdef}
    \forall u\in \mathcal{Q}(L_q), \qquad \mathbf{D}(u) \coloneqq - \inner{u}{L_{q}u}_{H^{-1}_{1/q}} ={1 \over 2} {\inner{u}{u}_{L^{2}_{1/q}} - \inner{u} {q{J}*u}_{L_{1/q}^2}},
\end{align}
where $\mathcal{Q}(L_q)$ denotes the form domain of $L_q$ and
\begin{align}\label{eq:Jdef}
    J(\theta) \coloneqq K W(\theta) = K(\cos \theta + m \cos 2\theta).
\end{align}
{Remark that the right-hand side of \eqref{eq:DFdef} is precisely the second variation of the free energy $\mathcal{F}$.}

The first, and easier, task is to clarify the functional analytic setting and spectral properties of the operator $L_q$ and the associated Dirichlet form $\bf{D}$. This is the content of \cref{functional section} and follows from standard results, treating the nonlinear part as a perturbation of the linear diffusion. The much more difficult task is estimating the value of the spectral gap, i.e. the lower bound for $\bf{D}$ after modding out the kernel of $L_q$, which we break into several steps.

It will be convenient to introduce the following notation for the covariance of Fourier modes: for $j,l\in\bZ$,
\begin{align}\label{eq:DbarDdef}
    D_{jl}(K,m) \coloneqq \int \cos(j\theta)\cos(l\theta)dq(\theta), \qquad \bar{D}_{jl}(K) \coloneqq \int \cos(j\theta)\cos(l\theta)d\bar{q}(\theta).
\end{align}
In the sequel, we will omit the explicit dependence on $K,m$. 

Rewriting the self-consistency equations \eqref{sc equation} as
\begin{align}
    r_{1} = {I_{1}(2Kr_1, 2Kmr_2) \over I_{0}(2Kr_1, 2Kmr_2)}, \quad r_{2} = {I_{2}(2Kr_1, 2Kmr_2) \over I_{0}(2Kr_1, 2Kmr_2)},
\end{align}
one sees that for $j=1,2$,
\begin{align}
    r_j = \int \cos(j\theta)\ dq(\theta), \qquad \bar{r}_j = \int \cos(j\theta) \ d\bar{q}(\theta).
\end{align}
It is an easy integration by parts to check that (see \cite[(2.21)]{bertini})
\begin{align}\label{Kuramoto variables}
        \bar{r}_{2} = 1 - {1 \over K}. 
\end{align}
Using integration by parts in \eqref{sc equation}, we may obtain explicit expressions relating $D_{11}, \ D_{12}, \ D_{22}$ to $r_{1}, r_{2}$ in the form of \cref{D expression lemma}. In particular, we establish an $O(K^{-1})$ bound for the variance of the Fourier coefficients of the state $q$. This bound will be crucial for estimating $r_{1},r_{2}$ in terms of $m$. The proof of \cref{D expression lemma} is a routine calculation, which, for completeness of exposition, we give in {\cref{D expression lemma proof section}}.

\begin{lemma} \label{D expression lemma}
    For $K>1$, we have
    \begin{equation} \label{D formulas}
    \begin{aligned}
        &D_{11} = \frac{1+r_2}{2}, \qquad \bar{D}_{11} = 1-\frac{1}{2K}\\
        &D_{12} = r_{1} \left(1 - \frac{r_{2} - \bar{r}_{2}}{4mr_{2}} \right), \qquad \bar{D}_{12} = \bar{r}_{1} - {\bar{r}_{2} \over K \bar{r}_{1}}\\  
        &D_{22} = 1 -{1 \over 2Kmr_{2}} \left(r_{2} - Kr_{1}^{2}  \frac{r_{2} - \bar{r}_{2}}{4mr_{2}} \right), \qquad \bar{D}_{22} = 1 - {2 \over K} + {3 \bar{r}_{2} \over K^{2} \bar{r}_{1}^{2}}.
    \end{aligned}
    \end{equation}
    Furthermore,
    \begin{align}
        \label{IBP first} &r_{1} = 2Kr_{1} (1-D_{11}) + 4Kmr_{2} (r_{1} - D_{12}), \\
        \label{IBP second} &r_{2} = Kr_{1} (r_{1} - D_{12}) + 2Kmr_{2}(1-D_{22}),\\
       \label{IBP third} &1 - 2K(1-D_{11})-2Km(1-D_{22}) + 4K^{2}m \big( (1-D_{11})(1-D_{22}) - (r_{1} - D_{12})^{2} \big) = 0.
    \end{align}
\end{lemma}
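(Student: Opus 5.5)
The plan is to obtain every identity from one integration-by-parts formula for the Gibbs density $q=q_{r_1,r_2}$, and then do algebra. Since $q=Z^{-1}\exp(2Kr_1\cos\theta+2Kmr_2\cos 2\theta)$, we have $q'=-(2Kr_1\sin\theta+4Kmr_2\sin2\theta)\,q$. Hence for every smooth periodic $f$,
\begin{align*}
\int f'\,dq=\int f\,(2Kr_1\sin\theta+4Kmr_2\sin 2\theta)\,dq .
\end{align*}
Products of sines will be rewritten with $\sin^2(j\theta)=1-\cos^2(j\theta)$ and $\sin\theta\sin2\theta=\cos\theta-\cos\theta\cos2\theta$. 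The second identity holds because both sides equal $\tfrac12(\cos\theta-\cos3\theta)$.

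First I take $f=\sin\theta$. The left side is $r_1$, and the right side is $2Kr_1(1-D_{11})+4Kmr_2(r_1-D_{12})$, which is \eqref{IBP first}. Next I take $f=\sin2\theta$. The left side is $2r_2$, and the right side is $2Kr_1(r_1-D_{12})+4Kmr_2(1-D_{22})$; halving gives \eqref{IBP second}. The formula $D_{11}=\tfrac{1+r_2}{2}$ is simply $\cos^2\theta=\tfrac12(1+\cos2\theta)$.

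Setting $m=0$ in \eqref{IBP first} and dividing by $\bar r_1\neq0$ gives $1=2K(1-\bar D_{11})=K(1-\bar r_2)$. This yields \eqref{Kuramoto variables} and $\bar D_{11}=1-\tfrac1{2K}$.

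I then substitute $D_{11}=\tfrac{1+r_2}2$ into \eqref{IBP first}, using $r_1>0$ and $r_2>0$ from \cref{exact multiplicities}. This gives $4Kmr_2(r_1-D_{12})=r_1\bigl(1-K(1-r_2)\bigr)$. Since $1-K+Kr_2=K(r_2-\bar r_2)$, it follows that $D_{12}=r_1\bigl(1-\tfrac{r_2-\bar r_2}{4mr_2}\bigr)$. Solving \eqref{IBP second} for $D_{22}$ and inserting this value of $r_1-D_{12}$ gives the stated formula for $D_{22}$.

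For the barred quantities, \eqref{IBP second} at $m=0$ reads $\bar r_2=K\bar r_1(\bar r_1-\bar D_{12})$, which gives $\bar D_{12}$. For $\bar D_{22}=\tfrac12(1+\bar r_4)$ I need $\bar r_4$. I will get it from the $m=0$ identities with $f=\sin 2\theta$ and $f=\sin3\theta$:
\begin{align*}
2\bar r_2&=K\bar r_1(\bar r_1-\bar r_3),\\
3\bar r_3&=K\bar r_1(\bar r_2-\bar r_4).
\end{align*}
Eliminating $\bar r_3$ and simplifying with $\bar r_2=1-1/K$ should produce $\bar D_{22}=1-\tfrac2K+\tfrac{3\bar r_2}{K^2\bar r_1^2}$. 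This is bookkeeping, but it is where arithmetic slips are most likely, so I will check it against small-$K$ asymptotics.

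Finally, \eqref{IBP third} comes from eliminating $r_2$ between \eqref{IBP first} and \eqref{IBP second}. Write $a=1-D_{11}$, $b=1-D_{22}$, $c=r_1-D_{12}$. Dividing \eqref{IBP first} by $r_1$ gives $1-2Ka=4Kmr_2c/r_1$, and \eqref{IBP second} gives $r_2(1-2Kmb)=Kr_1c$. Multiplying these two relations and cancelling $r_1r_2\neq0$ gives $(1-2Ka)(1-2Kmb)=4K^2mc^2$. Expanding this is exactly \eqref{IBP third}.

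The only delicate point is dividing by $r_1$ and $r_2$, which is justified by $(r_1,r_2)\in(0,1)^2$. The multiplication step avoids dividing by $1-2Kmb$, so no case analysis is needed there.
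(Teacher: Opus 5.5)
Your proposal is correct and follows the paper's proof. Integration by parts against $\sin\theta$ and $\sin 2\theta$, with the same trigonometric identities, gives \eqref{IBP first}, \eqref{IBP second} and the formulas for $D_{12}, D_{22}$. Your multiplication step for \eqref{IBP third} is just the paper's observation that the $2\times 2$ linear system satisfied by $(r_1,r_2)\neq(0,0)$ has vanishing determinant.

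The barred identities, which the paper only attributes to Bertini et al., come out of your elimination as claimed. It gives $\bar r_4=\bar r_2-\frac{3}{K}+\frac{6\bar r_2}{K^2\bar r_1^2}$, hence $\bar D_{22}=\frac12(1+\bar r_4)=1-\frac{2}{K}+\frac{3\bar r_2}{K^2\bar r_1^2}$.
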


Recalling the expression \eqref{eq:DFdef} of the Dirichlet form in terms of the $L_{1/q}^2$ inner product, the first important step to lower bounding the spectral gap is the following orthogonal decomposition of $L_{1/q}^2$. One should compare \cref{decomposition} with the simpler decomposition \cite[Lemma 2.1]{bertini} in that suffices for the Kuramoto model (i.e. when $m=0$). The proof of \cref{decomposition} is given in \cref{ssec:stab_od}.

\begin{prop} \label{decomposition}
    Suppose $K>1$. We have the orthogonal decomposition
    \begin{align}
        L^{2}_{1/q} = V_{0} \oplus V_{1/2} \oplus V_{\mu} \oplus V_{\lambda_{+}} \oplus V_{\lambda_{-}},
    \end{align}
    where
    \begin{align}
        \label{mu def} &\mu \coloneqq {K \over 2} \Big( (1-D_{11}) + m (1-D_{22}) - \sqrt{ [(1-D_{11}) - m (1-D_{22})]^{2} + 4m(r_1 - D_{12})^{2} } \Big), \\
       \label{lambda def} &\lambda_{\pm} \coloneqq {K \over 2} \Big( D_{11} + m D_{22} \pm \sqrt{(D_{11} - m D_{22})^{2} + 4m D_{12}^{2}} \Big),\\
       \label{eq:V0def} &V_{0} \coloneqq \left\{ u_{0} =  a_0 + \sum_{j=3}^{\infty} (a_j \cos j \theta + b_j \sin j \theta) : a_0, a_j, b_j \in\bR\right\},\\
       \label{eq:V12def} &V_{1/2} \coloneqq \left\{ u_{1/2}=c_{1/2}  q (r_1 \sin \theta + 2mr_2 \sin 2\theta)   : c_{1/2} \in \mathbb{R} \right\}\\    \label{eq:Vmudef} &V_{\mu} \coloneqq \left\{ u_{\mu} =c_{\mu}  q  (-2r_2 \sin \theta + r_1 \sin 2\theta) : c_{\mu} \in \mathbb{R} \right\}, \\   \label{eq:Vlambdef}  &V_{\lambda_{\pm}} \coloneqq \left\{ u_{\lambda_{\pm}} = c_{\lambda_{\pm}}  q(\eta_{1}^{\pm} \cos \theta + \eta_{2}^{\pm} \cos 2\theta)  : c_{\lambda_{\pm}} \in \mathbb{R} \right\}.
    \end{align} 
    and
    \begin{equation} \label{eta def}
    \begin{aligned}
        &\eta_{1}^{\pm}\coloneqq D_{12}, \\
        &\eta_{2}^{\pm} \coloneqq {1 \over 2} \Big( -(D_{11} - m D_{22}) \pm \sqrt{(D_{11} - m D_{22})^{2} + 4m D_{12}^{2}} \Big).
    \end{aligned}
    \end{equation}
    Moreover, each subspace is an eigenspace of operator $u \mapsto q ({J} * u)$ satisfying
    \begin{align}
        q ({J} * u) = \lambda  u \quad \text{ for } u \in V_{\lambda}, \quad \lambda \in \left\{0, {1 \over 2}, \mu, \lambda_{+}, \lambda_{-} \right\}.
    \end{align}
\end{prop}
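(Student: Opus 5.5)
The plan is to diagonalize the operator $T u \coloneqq q(J*u)$ directly on $L^2_{1/q}$. Since $J = K(\cos\theta + m\cos 2\theta)$, we have
\begin{align*}
J*u(\theta) = 2\pi K\big(\hat a_1\cos\theta + \hat b_1\sin\theta + m\hat a_2\cos2\theta + m\hat b_2\sin 2\theta\big),
\end{align*}
where $\hat a_j,\hat b_j$ are (normalized) Fourier coefficients of $u$; we fix normalizations so that $T(q\cos j\theta)$ and $T(q\sin j\theta)$ come out correctly. Hence $T$ has rank at most four, with range $R \coloneqq \mathrm{span}\{q\cos\theta, q\cos2\theta, q\sin\theta, q\sin2\theta\}$.

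First I will check that $T$ is self-adjoint on $L^2_{1/q}$. This follows because
\begin{align*}
\langle u, q(J*v)\rangle_{L^2_{1/q}} = \int u\,(J*v)\,d\theta,
\end{align*}
which is symmetric in $u,v$ since $J$ is even. Consequently $L^2_{1/q} = \ker T \oplus R$ orthogonally. Moreover, $u\in\ker T$ iff the first and second Fourier modes of $u$ vanish, which is exactly $V_0$. Note that $a_0$ is allowed here, since zero-average is not imposed in $L^2_{1/q}$.

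Next, I will split $R$ by parity. Since $q$ is even (because $s_1=s_2=0$), $R$ decomposes into the even part $R_e = \mathrm{span}\{q\cos\theta, q\cos2\theta\}$ and the odd part $R_o = \mathrm{span}\{q\sin\theta, q\sin2\theta\}$. These are $L^2_{1/q}$-orthogonal, since $\int \cos(j\theta)\sin(l\theta)\,q\,d\theta = 0$, and each is $T$-invariant. On each part I compute the $2\times2$ matrix of $T$ in the given basis, using $\int \cos j\theta\cos l\theta\, q = D_{jl}$ and $\int \sin j\theta \sin l\theta\, q = \tfrac12\int(\cos(j-l)\theta - \cos(j+l)\theta)q$. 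This gives $1-D_{11}$, $r_1 - D_{12}$ and $1-D_{22}$ for the sine block, using $\int\sin\theta\sin2\theta\,q = \tfrac12(r_1 - r_3)$ together with $D_{12} = \tfrac12(r_1+r_3)$, so that $\tfrac12(r_1-r_3) = r_1 - D_{12}$.

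On the even block, $T(q(\eta_1\cos\theta+\eta_2\cos2\theta)) = Kq\big[(D_{11}\eta_1 + D_{12}\eta_2)\cos\theta + m(D_{12}\eta_1+D_{22}\eta_2)\cos2\theta\big]$. The resulting matrix $K\begin{pmatrix} D_{11} & D_{12}\\ mD_{12} & mD_{22}\end{pmatrix}$ has trace $K(D_{11}+mD_{22})$ and determinant $K^2m(D_{11}D_{22}-D_{12}^2)$. Its eigenvalues are therefore $\lambda_\pm$, and solving the first row gives eigenvectors proportional to $(D_{12}, \eta_2^\pm)$ after rescaling by $K$. Similarly, the odd block has matrix $K\begin{pmatrix}1-D_{11} & r_1-D_{12}\\ m(r_1-D_{12}) & m(1-D_{22})\end{pmatrix}$, with eigenvalues $\tfrac K2\big(\mathrm{tr}\pm\sqrt{\cdot}\big)$; the minus root is $\mu$.

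For the plus root, rather than computing directly, I will show it equals $1/2$ by exhibiting the eigenvector. The self-consistency relations \eqref{IBP first} and \eqref{IBP second} of \cref{D expression lemma} say precisely that $(r_1, 2mr_2)$ satisfies
\begin{align*}
K\big[(1-D_{11})r_1 + (r_1-D_{12})2mr_2\big] &= r_1/2,\\
Km\big[(r_1-D_{12})r_1 + (1-D_{22})2mr_2\big] &= mr_2.
\end{align*}
So $u_{1/2} = q(r_1\sin\theta+2mr_2\sin2\theta)$, which is $-\tfrac12 q'$ up to normalization, has eigenvalue $\tfrac12$. Then \eqref{IBP third}, which states $\det(\tfrac12 I - M)=0$, confirms consistency.

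Finally, the orthogonal complement of $u_{1/2}$ inside $R_o$ with respect to $L^2_{1/q}$ must be the other eigenvector, because $T|_{R_o}$ is symmetric in the $L^2_{1/q}$ structure. I will check that $q(-2r_2\sin\theta+r_1\sin2\theta)$ is that vector. Concretely, I will verify $\langle u_{1/2},u_\mu\rangle_{L^2_{1/q}}=0$, i.e.
\begin{align*}
-2r_1r_2(1-D_{11}) + (r_1^2 - 4mr_2^2)(r_1-D_{12}) + 2mr_1r_2(1-D_{22}) = 0.
\end{align*}
Up to factors, this is \eqref{IBP first} times $r_2$ minus \eqref{IBP second} times $r_1$. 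Since the symmetric operator has distinct eigenvalues (unless degenerate, in which case any basis works), this gives eigenvalue $\mu$.

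The main obstacle is the bookkeeping: getting Fourier normalizations consistent so that the eigenvalues come out exactly as in \eqref{mu def} and \eqref{lambda def}, and confirming that the odd eigenvector orthogonality reduces exactly to the identities of \cref{D expression lemma}.
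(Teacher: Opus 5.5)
Your route is the paper's: split off $V_0$, then diagonalize $u\mapsto q(J*u)$ on $\mathrm{span}\{q\sin\theta,q\sin2\theta\}$ and $\mathrm{span}\{q\cos\theta,q\cos2\theta\}$ using the same two $2\times2$ matrices. Several of your checks are correct and a bit more explicit than the paper's:
\begin{itemize}
\item the symmetry of $u\mapsto q(J*u)$ in $L^2_{1/q}$ follows from the evenness of $J$;
\item the $\tfrac12$-eigenvector relations are \eqref{IBP first}--\eqref{IBP second} verbatim;
\item $K\langle u_{1/2},u_\mu\rangle_{L^2_{1/q}}$ is exactly $r_1$ times \eqref{IBP second} minus $r_2$ times \eqref{IBP first}, so $u_\mu$ spans the invariant orthogonal complement of $u_{1/2}$ in the sine block.
\end{itemize}

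One step does not follow as written: the identification of the eigenvalues in the sine block. Exhibiting an eigenvector with eigenvalue $\tfrac12$ only shows $\tfrac12\in\{\tilde\mu,\mu\}$, not that it is the plus root. The eigenvalue of $u_\mu$ is then $\nu=K(1-D_{11})+Km(1-D_{22})-\tfrac12$ (trace minus $\tfrac12$). This equals the minus root $\mu$ only if $\nu\le\tfrac12$.

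You can close this with the same identities. Divide \eqref{IBP first} by $r_1$ and \eqref{IBP second} by $r_2$, then add:
\[
K(1-D_{11})+Km(1-D_{22}) = 1-K(r_1-D_{12})\,\frac{r_1^2+4mr_2^2}{2r_1r_2}.
\]
Moreover $r_1-D_{12}=\int\sin\theta\sin2\theta\,dq=2\int\sin^2\theta\cos\theta\,dq>0$; to see this, pair $\theta$ with $\pi-\theta$ and use $r_1>0$. Hence $\nu<\tfrac12$, i.e.\ $\tilde\mu=\tfrac12>\mu=\nu$. This also disposes of your degenerate caveat.

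The paper's proof has the same lacuna: \eqref{IBP third} only says that $\tfrac12$ is a root of the characteristic polynomial of the sine block. The strict bound $\mu<\tfrac12$ obtained this way is used later in \cref{lem:evalests}.

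A smaller point concerns $m=0$. There the kernel of $u\mapsto q(J*u)$ strictly contains $V_0$, and its range is smaller than your $R$. So instead of $L^2_{1/q}=\ker\oplus R$, write $L^2_{1/q}=V_0\oplus R$ with $V_0=R^{\perp}$ contained in the kernel, which is all you actually use.
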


\begin{remark}
    The reader may check that $V_{1/2} = \mathrm{span}\{q'\} \subset \ker L_q$. A consequence of \cref{Dirichlet lower} below is that, in fact, $\mathrm{span}\{q'\}  = \ker L_q$.
\end{remark}

When $m=0$ (i.e. Kuramoto), the eigenvalues are explicitly given by $\lambda_{+} = K-1/2$ and $\lambda_{-} = \mu = 0$. Moreover, the quantities $\eta_j^{\pm}$ are also explicit. In the general case $m>0$, we  no longer have explicit values. Nevertheless, if $m$ is sufficiently small, then at the cost of more involved computations,  we can show good perturbative estimates around the $m=0$ case. For instance, see \cref{lem:evalests}.

Applying \cref{decomposition} to \eqref{eq:DFdef}, we see that
\begin{multline}
     \mathbf{D}(u)   = \frac{1}{2} \|u_0\| _{L^{2}_{1/q}}^2
    + \Big(\frac{1}{2} - \mu\Big) \|u_\mu\|_{L^{2}_{1/q}}^2
    + \Big(\frac{1}{2} - \lambda_{+}\Big) \|u_{\lambda_{+}}\|_{L^{2}_{1/q}}^2
   + \Big(\frac{1}{2} - \lambda_{-}\Big) \|u_{\lambda_{-}}\|_{L^{2}_{1/q}}^2.
\end{multline}
Remark that the $u_{1/2}$ contribution cancels because it is an eigenvector of $qJ\ast(\cdot)$ with eigenvalue $1/2$. Writing $u_0 = a_0(1+ v_0)$, where $\int v_0 = 0$, the next proposition allows us to compute explicitly the minimal value of $\langle{u_0,u_0}\rangle$, which is achieved in $V_0$. The proof is presented in \cref{ssec:opt_in_V0}. Existence is by standard convex optimization, while the explicit minimal value is by direct computation.

\begin{prop} \label{opt in V0}
    Suppose $K>1$ and $m \in [0, 1/4]$. There exists a unique $\hat{v}_0 \in V_{0}$ with $\int \hat{v}_0 =0$ such that
    \begin{align}\label{eq:opt in V0}
        \min_{v_0 \in V_{0}: \int v_0=0} \|1+v_0\|_{L_{1/q}^2}^2 = \|1+\hat{v}_0\|_{L_{1/q}^2}^2
        = \frac{(2\pi)^{2}(D_{11}D_{22}-D_{12}^{2})}{(D_{11} - r_{1}^{2})(D_{22} - r_{2}^{2}) -(D_{12} - r_{1}r_{2})^{2}}.
    \end{align}
\end{prop}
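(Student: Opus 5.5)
The plan is to read \eqref{eq:opt in V0} as a quadratic minimization under finitely many linear constraints, and to solve it exactly with a Lagrange-multiplier ansatz certified by Cauchy--Schwarz. I regard $V_0$ as the closed subspace of $L^2_{1/q}$ it spans; this is consistent with the orthogonal decomposition of \cref{decomposition}. Put $f \coloneqq 1+v_0$. As $v_0$ ranges over $\{v_0\in V_0:\int v_0=0\}$, $f$ ranges exactly over the $f\in L^2_{1/q}$ with
\begin{align}
\int f\,d\theta = 2\pi,\qquad \int f\cos j\theta\,d\theta=\int f\sin j\theta\,d\theta = 0,\quad j=1,2.
\end{align}
Indeed, these conditions say precisely that $v_0 = f-1$ has zero mean and no Fourier modes $1,2$. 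So the problem becomes: minimize $\int f^2/q\,d\theta$ over this closed affine subspace. Since $q$ is bounded above and below, $L^2_{1/q}$ is equivalent to $L^2$, and the functional is strictly convex and coercive. This already gives existence and uniqueness of a minimizer $\hat f = 1+\hat v_0$; the Cauchy--Schwarz argument below gives both again explicitly.

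The Euler--Lagrange condition says that $\hat f/q$ lies in the span of the constraint functions $1,\cos\theta,\sin\theta,\cos2\theta,\sin2\theta$. Because $q=q_{r_1,r_2}$ is even, I take the ansatz $g\coloneqq \alpha_0+\alpha_1\cos\theta+\alpha_2\cos2\theta$ and $\hat f \coloneqq qg$. The sine constraints then hold automatically, since $q\cos j\theta$ is even. The cosine constraints, written using $r_j=\int\cos j\theta\,dq$ and $D_{jl}$ from \eqref{eq:DbarDdef}, become the linear system $M\alpha = (2\pi,0,0)^T$, where $M$ is the Gram matrix of $1,\cos\theta,\cos2\theta$ in $L^2(q)$:
\begin{align}
M=\begin{bmatrix}1 & r_1 & r_2\\ r_1 & D_{11} & D_{12}\\ r_2 & D_{12} & D_{22}\end{bmatrix}.
\end{align}
Since $q>0$ and $1,\cos\theta,\cos2\theta$ are linearly independent, $M$ is positive definite, so the system has a unique solution.

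Optimality follows from Cauchy--Schwarz. For any admissible $f$, the constraints give $\int fg\,d\theta = 2\pi\alpha_0$. Hence
\begin{align}
(2\pi\alpha_0)^2 = \Big(\int \frac{f}{\sqrt q}\,\sqrt q\, g\,d\theta\Big)^2 \le \int \frac{f^2}{q}\,d\theta\int q g^2\,d\theta .
\end{align}
Moreover $\int qg^2 = \int \hat f g = 2\pi\alpha_0$, which also shows $\alpha_0>0$. Therefore $\int f^2/q \ge 2\pi\alpha_0$, with equality if and only if $f$ is a multiple of $qg$. Among admissible functions that forces $f=\hat f$, which gives uniqueness of $\hat v_0=\hat f-1$.

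It remains to evaluate $\alpha_0$. By Cramer's rule, $\alpha_0 = 2\pi (D_{11}D_{22}-D_{12}^2)/\det M$. Subtracting $r_1$ times the first row from the second and $r_2$ times the first row from the third reduces $\det M$ to the determinant of the covariance matrix of $(\cos\theta,\cos2\theta)$ under $q$:
\begin{align}
\det M = (D_{11}-r_1^2)(D_{22}-r_2^2)-(D_{12}-r_1r_2)^2,
\end{align}
which is strictly positive. Then $2\pi\alpha_0$ is exactly the right-hand side of \eqref{eq:opt in V0}.

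I do not expect a serious obstacle. The points needing care are justifying that the constraint set is exactly $\{1+v_0\}$, which requires treating $V_0$ as closed, and noting that the evenness of $q$ (guaranteed by the parameter reduction of \cref{s=0 lemma}) is what allows the sine modes to be dropped. The hypotheses $K>1$ and $m\le 1/4$ enter only to fix $(r_1,r_2)$ as the solution from \cref{exact multiplicities}; the computation itself uses only $q>0$ and evenness.
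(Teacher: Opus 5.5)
Your proof is correct and follows essentially the same route as the paper. Both use the Lagrange-type ansatz $1+\hat v_0 = q(a+b\cos\theta+c\cos 2\theta)$, the Gram system with matrix $M$, and Cramer's rule with the row reduction of $\det M$ to the covariance determinant. The differences are minor. You drop the sine modes up front using the evenness of $q$, whereas the paper carries coefficients $d,e$ and shows they vanish via the sine Gram system. Your Cauchy--Schwarz certificate gives minimality and uniqueness directly, instead of appealing to the Hilbert projection theorem and a Lagrange multiplier rule.
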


If $\int u = 0$, then inserting the explicit form of $u_0,u_{1/2},u_{\lambda_\pm},u_{\mu}$,
\begin{align}
        2\pi a_{0} + (c_{\lambda_{+}} \eta_{1}^{+} + c_{\lambda_{-}} \eta^{-}_{1}) \int  \cos \theta \ dq +(c_{\lambda_{+}} \eta_{2}^{+} + c_{\lambda_{-}} \eta^{-}_{2}) \int \cos 2\theta \ dq= 0.
\end{align}
Yielding $a_0$ explicitly in terms of $r_j,\eta_j^{\pm},c_{\lambda_\pm}$,
 \begin{align}\label{eq:a0}
        a_{0} = - {1 \over 2\pi}\big((r_{1} \eta_{1}^{+} + r_{2} \eta_{2}^{+}) c_{\lambda_{+}} + (r_{1} \eta^{-}_{1} + r_{2} \eta^{-}_{2}) c_{\lambda_{-}} \big).
    \end{align}
It follows now that
\begin{multline}\label{eq:Drewrite}
     \mathbf{D}(u)  = \frac{a_0^2}{2}\|1+\hat{v}_0\|_{L_{1/q}^2}^2 + \Big(\frac12-\mu\Big)\|u_\mu\|_{L_{1/q}^2}^2 + \Big(\frac12-\lambda_+\Big)\|u_{\lambda_+}\|_{L_{1/q}^2}^2 + \Big(\frac12-\lambda_-\Big)\|u_{\lambda_-}\|_{L_{1/q}^2}^2 \\
     +\underbrace{\frac{a_0^2}{2}\Big(\|1+v_0\|_{L_{1/q}^2}^2 - \|1+\hat{v}_0\|_{L_{1/q}^2}^2\Big)}_{\ge 0}.
\end{multline}
We may use the expression \eqref{eq:a0} for $a_0$, the expression \eqref{eq:opt in V0} for $\|1+\hat{v}_0\|_{L_{1/q}^2}^2$, and the explicit forms of \eqref{eq:Vmudef}, \eqref{eq:Vlambdef} of $V_\mu, V_{\lambda_{\pm}}$ to  rewrite the first line of the preceding right-hand side as
\begin{align}\label{eq:Dmform}
    \begin{bmatrix}
            c_{\lambda_{+}} & c_{\lambda_{-}} 
        \end{bmatrix}
        \fC
        \begin{bmatrix}
            c_{\lambda_{+}}\\
            c_{\lambda_{-}} 
        \end{bmatrix}
        + \Big({1 \over 2} - \mu \Big) \alpha_{\mu} c_{\mu}^{2},
\end{align}
where the matrix $\fC = \begin{bmatrix}
        \fC^{++} & \fC^{+-} \\
        \fC^{+-} & \fC^{--}
    \end{bmatrix}$
is defined by
\begin{align}
        \label{eq:C++def} &\mathbf{C}^{++} \coloneqq   \frac{D_{11}D_{22} - D_{12}^{2}}{2\big((D_{11} - r_{1}^{2})(D_{22} - r_{2}^{2}) -(D_{12} - r_{1}r_{2})^{2}\big)} (r_{1}\eta_{1}^{+} + r_{2} \eta_{2}^{+})^{2} + \Big({1 \over 2} - \lambda_{+} \Big) \alpha_{\lambda_{+}},\\
        \label{eq:C+-def} &\mathbf{C}^{+-} \coloneqq  \frac{D_{11}D_{22} - D_{12}^{2}}{2\big((D_{11} - r_{1}^{2})(D_{22} - r_{2}^{2}) -(D_{12} - r_{1}r_{2})^{2}\big)}  (r_{1}\eta_{1}^{+} + r_{2} \eta_{2}^{+}) (r_{1}\eta^{-}_{1} + r_{2} \eta^{-}_{2}),\\
        \label{eq:C--def} &\mathbf{C}^{--} \coloneqq   \frac{D_{11}D_{22} - D_{12}^{2}}{2\big((D_{11} - r_{1}^{2})(D_{22} - r_{2}^{2}) -(D_{12} - r_{1}r_{2})^{2}\big)}  (r_{1}\eta^{-}_{1} + r_{2} \eta^{-}_{2})^{2} + \Big({1 \over 2} - \lambda_{-} \Big) \alpha_{\lambda_{-}}
    \end{align}
and
\begin{align}
        \label{eq:al12def} &\alpha_{1/2}\coloneqq \| q(\theta) (r_1 \sin \theta + 2mr_2 \sin 2\theta)\|^{2}_{L^{2}_{1/q}},\\
        \label{eq:almudef} &\alpha_{\mu}\coloneqq \| q(\theta) (-2r_2 \sin \theta + r_1 \sin 2\theta)\|^{2}_{L^{2}_{1/q}}, \\
        \label{eq:alladef} &\alpha_{\lambda_{\pm}}\coloneqq \| q(\theta)  (\eta_{1}^{\pm} \cos \theta + \eta_{2}^{\pm} \cos 2\theta) \|^{2}_{L^{2}_{1/q}}.
\end{align}
Further expressions for $\alpha_{\lambda_\pm}$ are given in \eqref{eq:alla+}, \eqref{alpha_2 expression}.

It turns out that the matrix $\fC$ is strictly positive definite and therefore has least eigenvalue $\gamma>0$. This nonobvious property is primarily a consequence of \cref{lem:maindet}, the proof of which requires several auxiliary lemmas and constitutes the bulk of the technical difficulty of this paper. In order not to obfuscate the main argument, we defer these technical results to \cref{sec:Tech}; and for the purposes of this section, the reader may take these results as a black box.  Reiterating a point made above, the absence of explicit values for quantities such as $\mu,\lambda_\pm, \eta_j^{\pm}$  substantially increases the involvedness of the computations compared to the $m=0$ case.

    
    
    Using the positive definitess of $\fC$ and that $\mu<1/2$ on \eqref{eq:Dmform}, then reversing  our steps in going from \eqref{eq:Drewrite} to \eqref{eq:Dmform},  we arrive at the following proposition. The complete details are given in \cref{ssec:stab_LB}.

\begin{prop} \label{Dirichlet lower}
    Suppose $K>1$ and $m \in [0,m_{0}]$. Then there exists $C_{1} \in (0, 1/2]$ such that
    \begin{align}\label{eq:Dirichlet lower}
        \forall u\in L_0^2, \qquad  \mathbf{D}(u) \geq C_{1} \|u-u_{1/2}\|^{2}_{L^{2}_{1/q}}, \qquad u_{1/2} = \frac{\inner{u}{q'}_{L^{2}_{1/q}}}{\inner{q'}{q'}_{L^{2}_{1/q}}}  q'.
    \end{align}
     In particular, this implies $\mathbf{D}(u) \geq 0$ and $\ker L_q = \mathrm{span} \{q'\}$. 
\end{prop}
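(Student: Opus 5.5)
We start from the identity \eqref{eq:Drewrite} together with its matrix form \eqref{eq:Dmform}. Write $u\in L_0^2$ as $u=u_0+u_{1/2}+u_\mu+u_{\lambda_+}+u_{\lambda_-}$ using \cref{decomposition}. Orthogonality in $L^2_{1/q}$ makes $u_{1/2}$ exactly the $L^2_{1/q}$-projection onto $\mathrm{span}\{q'\}$, since $V_{1/2}=\mathrm{span}\{q'\}$. Hence
\begin{align}
\|u-u_{1/2}\|_{L^2_{1/q}}^2=\|u_0\|^2+\|u_\mu\|^2+\|u_{\lambda_+}\|^2+\|u_{\lambda_-}\|^2 ,
\end{align}
with all norms in $L^2_{1/q}$. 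The goal is therefore to bound from below, separately, each piece of \eqref{eq:Drewrite} by a multiple of these four squared norms.

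\textbf{The $V_\mu$ and $V_0$ pieces.} For $u_\mu$ we need $\mu<1/2$ uniformly in $K>1$, $m\le m_0$. Since $\mu$ is the smaller eigenvalue of the $2\times2$ matrix $K\begin{bmatrix}1-D_{11}& \sqrt m(r_1-D_{12})\\ \sqrt m(r_1-D_{12}) & m(1-D_{22})\end{bmatrix}$, this should follow from \eqref{IBP third}. That identity says $\det(I-2\cdot\text{matrix})=0$, so one eigenvalue of the matrix equals $1/2$. Then $\mu\le 1/2$, and strictness (with a quantitative margin) comes from the other eigenvalue being distinct, via the estimates of \cref{lem:evalests}. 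For $u_0$ we split $u_0=a_0(1+v_0)$ with $\int v_0=0$, plus the part orthogonal to constants, and use the nonnegative last term of \eqref{eq:Drewrite}. Concretely, $\tfrac12\|u_0\|^2$ is already present in the form before minimizing over $v_0$, so we reserve a fraction $\epsilon\cdot\tfrac12\|u_0\|^2$ of it. We then apply the minimization of \cref{opt in V0} only to the remaining $(1-\epsilon)$ share. This keeps a coercive $u_0$ term at the price of replacing $\fC$ by a slightly perturbed matrix $\fC_\epsilon$ (multiply the first summand of $\fC$ by $1-\epsilon$).

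\textbf{The $\lambda_\pm$ block.} Here $\fC_\epsilon$ must be positive definite with least eigenvalue $\gamma_\epsilon>0$. We then get $c^T\fC_\epsilon c\ge\gamma_\epsilon(c_{\lambda_+}^2+c_{\lambda_-}^2)$, and since $\|u_{\lambda_\pm}\|^2=\alpha_{\lambda_\pm}c_{\lambda_\pm}^2$ with $\alpha_{\lambda_\pm}$ bounded above, this controls $\|u_{\lambda_\pm}\|^2$. Positive definiteness amounts to $\fC^{--}>0$ and $\det\fC>0$; this is the content of \cref{lem:maindet} and the technical lemmas of \cref{sec:Tech}, which we invoke. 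By continuity, a small $\epsilon$ preserves this. Combining the three pieces gives
\begin{align}
C_1=\min\{\epsilon/2,\ \tfrac12-\mu,\ \gamma_\epsilon/\max\alpha_{\lambda_\pm}\}.
\end{align}
We cap $C_1\le 1/2$, which is automatic since the $V_0$ coefficient is at most $1/2$.

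\textbf{Consequences and main obstacle.} Nonnegativity of $\mathbf{D}$ is immediate. For the kernel, $L_qu=0$ gives $\mathbf{D}(u)=0$, which forces $u=u_{1/2}\in\mathrm{span}\{q'\}$; the reverse inclusion $L_qq'=0$ is known. The main obstacle is uniformity in $K\in(1,\infty)$. Both $\gamma$ and $\tfrac12-\mu$ may degenerate as $K\to1^+$, where $r_1\to0$, and as $K\to\infty$. The constant $C_1$ must therefore be read off from the explicit bounds of \cref{lem:maindet} rather than from compactness. If uniformity fails, the statement should be interpreted with $C_1=C_1(K,m)>0$, which still suffices for \cref{stability thrm}.
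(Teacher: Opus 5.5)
Your proposal is correct and is essentially the paper's proof. It uses the same orthogonal decomposition from \cref{decomposition}, the same reduction \eqref{eq:Drewrite}--\eqref{eq:Dmform}, positive definiteness of $\fC$ from \cref{lem:maindet} plus Sylvester's criterion, $\mu<1/2$ from \cref{lem:evalests}, and a constant $C_1=C_1(K,m)$; the paper makes no uniformity claim in $K$ either, so that worry is moot.

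The only difference is the bookkeeping on $V_0$. You reserve $\tfrac{\epsilon}{2}\|u_0\|_{L^2_{1/q}}^2$ and use that $\fC_\epsilon$ stays positive definite for small $\epsilon$, since it is $\fC$ minus $\epsilon$ times a fixed rank-one positive semidefinite matrix. The paper instead minimizes fully, then recovers $a_0^2\|1+\hat v_0\|_{L^2_{1/q}}^2$ from $\gamma(c_{\lambda_+}^2+c_{\lambda_-}^2)$ via $(x+y)^2\le 2(x^2+y^2)$, and lets the nonnegative remainder in \eqref{eq:Drewrite} absorb the rest of $u_0$.

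One small caution about your derivation of $\mu<1/2$. The identity \eqref{IBP third} only shows that $1/2$ is \emph{some} eigenvalue of the sine block, not that it is the larger one. You also need the block's trace $K(1-D_{11})+Km(1-D_{22})$ to be $<1$, which follows from $r_2\ge\bar r_2$, $D_{12}<r_1$, and \eqref{IBP second}.
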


With \cref{Dirichlet lower} in hand, the conclusion of the proof of \cref{stability thrm} is a fairly straightforward consequence of the embedding $\|\cdot\|_{H_{1/q}^{-1}}\le C\|\cdot\|_{L_{1/q}^2}$. The details are given in \cref{ssec:stab_conc}.

\subsection{Self-adjointness and spectral properties of $L_{q}$}\label{functional section}
Having sketched the proof of \cref{stability thrm} in the previous subsection, we now turn to its proof. In this subsection, we clarify the functional analytic setting and spectral properties of the operator $L_q$ introduced in \eqref{linear MV operator}. This will establish the first assertion of \cref{stability thrm}. 

Starting from the right-hand side of the Dirichlet form \eqref{eq:DFdef}, which we decompose as
\begin{align}
    \mathbf{D}(u) = {1 \over 2} {\inner{u}{u}_{L^{2}_{1/q}} - \inner{u} {q{J}*u}_{L_{1/q}^2}} \eqqcolon \mathbf{D}_1(u) + \mathbf{D}_2(u).
\end{align}
 Clearly, this expression makes sense for any $u\in L^2$.

Integrating by parts, we may rewrite
\begin{align}
    \mathbf{D}_1(u) = -\frac12\langle{u,\p_\theta(q\p_\theta(q^{-1}u))}\rangle_{H_{1/q}^{-1}} \eqqcolon \langle{u,L_{q,1}u}\rangle_{H_{1/q}^{-1}}.
\end{align}
Since $q$ is smooth and strictly positive, it is evident that $L_{q,1}$ is a positive, self-adjoint densely defined operator on $H_{1/q}^{-1}$ with $\mathrm{Dom}(L_{q,1}) = H_q^1$ and form domain $\mathcal{Q}(L_{q,1}) = L_{1/q}^2$. Moreover, $L_{q,1}$ has compact resolvent, therefore by \cite[Theorem 6.29]{kato2013perturbation}, it has only pure point spectrum $0<\tilde{\lambda}_{n} \le \tilde{\lambda}_{n+1}$ with associated eigenvectors $\tilde{e}_n$ forming a complete orthonormal basis of $H_{1/q}^{-1}$. One can also show that the $\tilde{e}_n$ are smooth. 

Similarly, we may rewrite 
\begin{align}
    \mathbf{D}_2(u) = \langle{u,\p_\theta(q\p_\theta J\ast u)}\rangle_{H_{1/q}^{-1}} \eqqcolon \langle{u,L_{q,2}u}\rangle_{H_{1/q}^{-1}}.
\end{align}
It is easy to see that $L_{q,2}$ is symmetric and
\begin{align}
    \|L_{q,2}u\|_{H_{1/q}^{-1}} = \|\p_\theta J\ast u\|_{L_q^2} \le \|q\|_{L^\infty} \|\p_\theta J \ast u\|_{L^2} &\le \|q\|_{L^\infty}^{1/2} \|\p_\theta^2 J\|_{L^1} \|\p_\theta^{-1}u\|_{L^2} \nonumber\\
    &\le \|q\|_{L^\infty} \|\p_\theta^2 J\|_{L^1}\|u\|_{H_{1/q}^{-1}}.
\end{align}
By the Kato-Rellich theorem \cite[Theorem X.12]{reed_methods_1975}, $L_q = L_{q,1}+L_{q,2}$ is a densely defined, self-adjoint operator on $H_{1/q}^{-1}$ with $\mathrm{Dom}(L_q) = H_{q}^1$. Moreover, by the KLMN theorem \cite[Theorem X.17]{reed_methods_1975}, the form domain $\mathcal{Q}(L_{q}) = L_{1/q}^2$. Since $L_{q,2}$ is a compact operator on $H_{1/q}^{-1}$, $L_{q}$ also has compact resolvent and therefore has only pure point spectrum with (smooth) eigenvectors forming a complete orthonormal basis of $H_{1/q}^{-1}$.

\subsection{Proof of \cref{D expression lemma}} \label{D expression lemma proof section}
In this subsection, we prove \cref{D expression lemma}.

The formula $D_{11} = (1+r_2)/2$ follows immediately from the identity $\cos^{2}\theta = (1+ \cos2\theta)/2.$ For the $D_{12}$ identity, we consider $r_1$ identity from \eqref{sc equation} and integrate by parts to obtain
    \begin{align}
        &r_{1} \int \exp(2Kr_{1} \cos\theta + 2Kmr_{2} \cos2\theta) \ d\theta \nonumber\\
        &= \int \cos \theta  \exp(2Kr_{1} \cos\theta + 2Kmr_{2} \cos2\theta) \ d\theta \nonumber\\
        &= - \int \sin \theta (-2Kr_{1} \sin \theta -4Km r_{2} \sin2\theta)  \exp(2Kr_{1} \cos\theta + 2Kmr_{2} \cos2\theta) \ d\theta.
    \end{align}
    Using $\sin ^{2} \theta = (1- \cos2\theta)/2$ and $\sin\theta \sin2\theta = \cos \theta - \cos \theta \cos 2\theta$, it follows that 
    \begin{align}\label{eq:r1D12}
        r_{1}=Kr_{1} (1-r_{2}) + 4Kmr_{2} (r_{1} - D_{12}),
    \end{align}
    implying the $D_{12}$ identity.     Lastly for $D_{22}$ identity, we integrate by parts as,
    \begin{align}
        &r_{2} \int \exp(2Kr_{1} \cos\theta + 2Kmr_{2} \cos2\theta) \ d\theta  \nonumber\\
        &= \int \cos2\theta \ \exp(2Kr_{1} \cos\theta + 2Kmr_{2} \cos2\theta) \ d\theta \nonumber\\
        &=- \int \frac{\sin 2\theta}{2} (-2Kr_{1} \sin \theta - 4Kmr_{2} \sin 2\theta) \ \exp(2Kr_{1} \cos\theta + 2Kmr_{2} \cos2\theta) \ d\theta.
    \end{align}
    Using the identities $\sin \theta \sin 2\theta = \cos \theta - \cos \theta \cos 2\theta$ and $\sin^{2}2\theta = 1 - \cos^{2} 2\theta,$ we deduce \begin{align}\label{eq:r2D22}
        r_{2} = Kr_{1} ( r_{1} - D_{12}) + 2Kmr_{2} (1- D_{22}).
    \end{align}
    Substituting $D_{12}$ into this equation yields the desired formula for $D_{22}$. 
    This establishes \eqref{D formulas}.

    Turning to the identities \eqref{IBP first}-\eqref{IBP third}, 
    the first two equations follow from \eqref{eq:r1D12}, \eqref{eq:r2D22}, respectively. For the third, we note that the first two may be rewritten as
    \begin{align}
        \begin{bmatrix}
            1-2K(1-D_{11}) & -4Km(r_{1} - D_{12})\\
            -K(r_{1}-D_{12}) & 1-2Km(1-D_{22})
        \end{bmatrix}
        \begin{bmatrix}
        r_{1} \\
        r_{2}
        \end{bmatrix}
        =\begin{bmatrix}
        0 \\
        0
        \end{bmatrix}.
    \end{align}
    Since $(r_1, r_2) \neq (0, 0),$ the coefficient matrix must be singular. Computing its determinant then yields \eqref{IBP third}. This completes the proof of the lemma.

\subsection{Proof of \cref{decomposition}}\label{ssec:stab_od}
We now turn to the proof of \cref{decomposition}.

Since $J$ only has non-zero first and second Fourier coefficients, it is immediate that $q(J *u) = 0$ for $u\in V_0$. We can then write the decomposition \begin{align}
        L^{2}_{1/q} 
        &= V_{0} \oplus \text{span}_{\mathbb{R}} \{ q(\theta) \sin\theta, q(\theta) \sin 2\theta\} \oplus \text{span}_{\mathbb{R}}\{ q(\theta) \cos\theta, q(\theta) \cos 2\theta \}.
    \end{align}
    The orthogonality of these subspaces follows from the direct computation. We analyze the decompositions of $\text{span}_{\mathbb{R}} \{ q(\theta) \sin\theta, q(\theta) \sin 2\theta\}$ and $ \text{span}_{\mathbb{R}}\{ q(\theta) \cos\theta, q(\theta) \cos 2\theta \}$ separately.

   Identifying $u= q \big( a\sin\theta+ b \sin 2\theta \big)$ with $(a, b) \in \bR^2$,
   \begin{align}
       J\ast u = \begin{bmatrix}
            K(1-D_{11}) & K(r_1 - D_{12})\\
            Km(r_1 - D_{12}) & Km(1-D_{22})
        \end{bmatrix}\begin{bmatrix}
            a \\ b
        \end{bmatrix}.
   \end{align}
    Solving for eigenvalues, we find 
    \begin{equation}
    \begin{aligned}
        \tilde{\mu} = {K \over 2} \Big( (1-D_{11}) + m (1-D_{22}) + \sqrt{ [(1-D_{11}) - m (1-D_{22})]^{2} + 4m(r_1 - D_{12})^{2} } \Big), \\
        \mu = {K \over 2} \Big( (1-D_{11}) + m (1-D_{22}) - \sqrt{ [(1-D_{11}) - m (1-D_{22})]^{2} + 4m(r_1 - D_{12})^{2} } \Big),
    \end{aligned}
    \end{equation}
    with respective eigenvectors $\begin{bmatrix}
            r_1\\
            2mr_2
        \end{bmatrix},
        \begin{bmatrix}
            -2r_2 \\
            r_1
        \end{bmatrix}$.
    Using \cref{D expression lemma}\eqref{IBP third}, one checks that $\tilde{\mu} = 1/2.$ 
    Since ${1/ 2}\ne \mu$, their corresponding eigenspaces $V_{1/2}$ and $V_{\mu}$ are orthogonal.

    Similarly, for $u = aq\cos\theta + bq\cos(2\theta)$, we can write
    \begin{align}
    J\ast u =     \begin{bmatrix}
            KD_{11} & KD_{12}\\
            KmD_{12} & KmD_{22}
        \end{bmatrix}
        \begin{bmatrix}
            a \\ b
        \end{bmatrix}.
    \end{align}
    The eigenvalues are $\lambda_{\pm}$ as in \eqref{lambda def}, and the corresponding eigenvectors are  $\begin{bmatrix}
            \eta_{1}^{j }\\
            \eta_{2}^{j}
        \end{bmatrix}$
    for $j \in \{\pm\}$ and $\eta_i^{\pm}$ as in \eqref{eta def}. Applying all decompositions, we complete the proof.

\subsection{Proof of \cref{opt in V0}}\label{ssec:opt_in_V0}
Next, we give the proof of \cref{opt in V0}.

Since  the domain $C\coloneqq \{u\in V_{0}  : \int u = 0\}$ is convex, existence of a unique minimizer $\hat{u} \in V_{0}$ follows from the Hilbert projection theorem. Recalling \cref{decomposition}, the orthogonality relations are
\begin{align}
        0=\int \hat{u}(\theta)\cos(j\theta) \ d\theta = \int \hat{u}(\theta) \sin(j\theta) \ d\theta, \qquad {1}\le j\le 2.
    \end{align}
    By the method of Lagrange multipliers, there exists $a, b, c, d, e \in \bR$ such that \begin{align}
        \hat{u}(\theta) = -1 + a  q(\theta) + b  q(\theta) \cos \theta + c  q(\theta) \cos 2\theta+d  q(\theta) \sin \theta+ e  q(\theta) \sin 2\theta.
    \end{align}
    The relations $\int \hat{u} \sin \theta = \int \hat{u} \sin 2\theta=0$ lead to the linear system
    \begin{align}
        \begin{bmatrix}
            \int q(\theta) \sin^{2} \theta \ d\theta & \int q(\theta) \sin \theta \sin 2\theta \ d\theta\\
            \int q(\theta) \sin \theta \sin 2\theta \ d\theta & \int q(\theta) \sin^{2} 2\theta  \ d\theta\\
        \end{bmatrix}
        \begin{bmatrix}
            d\\
            e
        \end{bmatrix}
        =
        \begin{bmatrix}
            0\\
            0
        \end{bmatrix}.
    \end{align}
    Since the determinant of the left-hand matrix is positive, it follows that $d = e =0.$

    The remaining relations $\int \hat{u}  = \int \hat{u} \cos \theta = \int \hat{u} \cos 2\theta=0$  yield
    \begin{align}
        \begin{bmatrix}
            1 & \int q(\theta) \cos \theta \ d\theta & \int q(\theta) \cos 2\theta \ d\theta\\
            \int q(\theta) \cos \theta \ d\theta & \int q(\theta) \cos^{2}\theta \ d\theta & \int q(\theta) \cos \theta \cos 2\theta \ d\theta\\
            \int q(\theta) \cos 2\theta \ d\theta & \int q(\theta) \cos\theta \cos 2\theta \ d\theta & \int q(\theta) \cos^{2} 2\theta \ d\theta
        \end{bmatrix}
        \begin{bmatrix}
            a\\
            b\\
            c
        \end{bmatrix}
        =2\pi
        \begin{bmatrix}
            1\\
            0\\
            0
        \end{bmatrix}.
    \end{align}
    The first leading principal minor of the left matrix is $1,$ and the second leading principal minor is 
    \begin{align}
        \int q(\theta) \cos^{2} \theta \ d\theta - \Big(\int q(\theta) \cos \theta \ d\theta\Big)^{2} >0.
    \end{align}
    The determinant of the full matrix is 
        \begin{equation}
        \begin{aligned}
            &\left(\int q(\theta) \cos^{2} \theta \ d\theta - \Big(\int q(\theta) \cos \theta \ d\theta \Big)^{2} \right) \left(\int q(\theta) \cos^{2} 2\theta \ d\theta- \Big(\int q(\theta) \cos 2\theta \ d\theta\Big)^{2}\right)\\
            &\quad - \left(\int q(\theta) \cos \theta \cos 2\theta \ d\theta - \Big(\int q(\theta) \cos \theta \ d\theta\Big)\Big(\int q(\theta) \cos 2\theta \ d\theta\Big)\right)^{2}\\
            &= \Var_{\theta \sim q}[\cos \theta]  \Var_{\theta \sim q}[\cos 2\theta] - (\cov_{\theta \sim q}[\cos \theta, \cos2\theta])^{2} >0.
        \end{aligned}
        \end{equation}
    Solving for $a, b, c$ gives
    \begin{align}
        \begin{bmatrix}
            a\\
            b\\
            c
        \end{bmatrix}
        = (2\pi) 
         \begin{bmatrix}
            1 & r_{1} & r_{2}\\
            r_{1} & D_{11} & D_{12}\\
            r_{2} & D_{12} & D_{22}
        \end{bmatrix}^{-1}
        \begin{bmatrix}
            1\\
            0\\
            0
        \end{bmatrix}.
    \end{align}
   Substituting $\hat{u} = -1 + a q(\theta) + b q(\theta) \cos \theta+ c q(\theta)\cos 2\theta,$ we calculate
    \begin{align}
        \|1+\hat{u}\|_{L^{2}_{1/q}}^2 &=\begin{bmatrix}
            a & b & c
        \end{bmatrix}
       \begin{bmatrix}
            1 & r_{1} & r_{2}\\
            r_{1} & D_{11} & D_{12}\\
            r_{2} & D_{12} & D_{22}
        \end{bmatrix}
        \begin{bmatrix}
            a\\
            b\\
            c
        \end{bmatrix} \nonumber\\
        &=(2\pi)^{2} \frac{\bE_{\theta \sim q}[\cos^{2}\theta]\bE_{\theta \sim q}[\cos^{2}2\theta] - (\bE_{\theta \sim q}[\cos\theta \cos2\theta])^{2}}{\Var_{\theta \sim q}[\cos \theta]  \Var_{\theta \sim q}[\cos 2\theta] - (\cov_{\theta \sim q}[\cos \theta, \cos2\theta])^{2}}.
    \end{align}
    {\eqref{eq:opt in V0} now follows from $D_{ij} = \bE_{\theta\sim q}[\cos(i\theta)\cos(j\theta)]$ and $\cov_{\theta\sim q}[\cos(i\theta),\cos(j\theta)] = D_{ij}-r_ir_j$. This completes the proof of the proposition.}

\subsection{Proof of \cref{Dirichlet lower}}\label{ssec:stab_LB}
In this subsection, we consider the proof of \cref{Dirichlet lower}. We have already sketched most of it in \cref{ssec:stab_out}. What remains for us to show are (1)  the positive definiteness of the matrix $\fC$ defined in \eqref{eq:C++def}-\eqref{eq:C--def} and (2) the conclusion of the lower bound \eqref{eq:Dirichlet lower} from the positive definiteness of $\fC$. Throughout this subsection, we use the same notation as in \cref{ssec:stab_out}.


    Starting with item (1), the positive definiteness of $\fC$ relies on the following lemma, the proof of which is deferred to \cref{ssec:tech_maindet}.

    \begin{lemma}\label{lem:maindet}
    For $K>1$ and $m \in [0,m_{0}]$, we have
    \begin{multline}\label{Main Det}
    (D_{11}D_{22} - D_{12}^{2})  \left[ (r_{1} \eta_{1}^{+} + r_{2} \eta_{2}^{+})^{2} 
    \left( \frac{1}{2} - \lambda_{-} \right) \alpha_{\lambda_{-}} + (r_{1} \eta^{-}_{1} + r_{2} \eta^{-}_{2})^{2} 
    \left( \frac{1}{2} - \lambda_{+} \right) \alpha_{\lambda_{+}} \right] \\
    +2\big((D_{11} - r_{1}^{2})(D_{22} - r_{2}^{2}) - (D_{12} - r_{1}r_{2})^{2}\big) \left( \frac{1}{2} - \lambda_{+} \right) \left( \frac{1}{2} - \lambda_{-} \right) 
    \alpha_{\lambda_{+}}\alpha_{\lambda_{-}} >0.
    \end{multline}
\end{lemma}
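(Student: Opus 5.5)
We plan to prove \eqref{Main Det} perturbatively around the Kuramoto case $m=0$, where all the quantities are explicit.

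\emph{Step 1: the case $m=0$.} There $\lambda_+=K-\tfrac12$ and $\lambda_-=0$. We have $\eta^+=(D_{12},\,\eta_2^+)$ with $\eta_2^+\to0$, so $\eta^+\propto(1,0)$. Also $\eta^-\propto(-D_{12},D_{11})$, up to normalization, in the limit. The Kuramoto identities $\bar r_2=1-1/K$, $\bar D_{11}=1-\frac1{2K}$, $\bar D_{12}=\bar r_1-\bar r_2/(K\bar r_1)$ and $\bar D_{22}$ from \cref{D expression lemma} let us write every factor in terms of $K$ and $\bar r_1$ alone.

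\emph{Step 2: sign structure.} The only negative factor is $\tfrac12-\lambda_+$, which is close to $1-K<0$. Both $\tfrac12-\lambda_-\approx \tfrac12$ and the determinants $D_{11}D_{22}-D_{12}^2$, $\det\mathrm{Cov}$ are positive. So \eqref{Main Det} reads $P - N>0$. Here $P$ is the positive term $(D_{11}D_{22}-D_{12}^2)(r_1\eta_1^++r_2\eta_2^+)^2(\tfrac12-\lambda_-)\alpha_{\lambda_-}$. The term $N$ collects the two negative contributions weighted by $|\tfrac12-\lambda_+|\alpha_{\lambda_+}$.

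Dividing by $|\tfrac12-\lambda_+|\alpha_{\lambda_+}\alpha_{\lambda_-}(\tfrac12-\lambda_-)$, the inequality becomes
\[
\frac{(D_{11}D_{22}-D_{12}^2)(r_1\eta_1^++r_2\eta_2^+)^2}{|\tfrac12-\lambda_+|\,\alpha_{\lambda_+}} > 2\det\mathrm{Cov}+\frac{(D_{11}D_{22}-D_{12}^2)(r_1\eta_1^-+r_2\eta_2^-)^2}{(\tfrac12-\lambda_-)\alpha_{\lambda_-}}.
\]
At $m=0$ this becomes Bertini's inequality, that the Kuramoto $\mathbf C$ is positive. This is equivalent to $\|1+\hat v_0\|^2$ dominating, i.e. $\bar r_1^2/(K-1)\cdot(\dots)>\dots$, which reduces to a Tur\'anian inequality from \cref{turanian lemma}. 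We will verify it with a quantitative margin that is uniform in $K>1$, tracking the behaviour both as $K\to1^+$ (where $\bar r_1^2\sim 2(K-1)$) and as $K\to\infty$.

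\emph{Step 3: perturbation in $m$.} We will show $r_j=\bar r_j+O(m)$ and $D_{jl}=\bar D_{jl}+O(m)$ with explicit constants uniform in $K$. We plan to obtain these from the self-consistency system, using $\partial_m$ of the fixed-point map together with the Hessian positivity of \cref{positive hessian in first}, or equivalently via \eqref{IBP first}--\eqref{IBP third}. The variance bound $1-D_{11}=O(1/K)$ keeps the constants uniform.

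We then propagate these bounds to $\lambda_\pm,\eta^\pm,\alpha_{\lambda_\pm}$ and to the covariance determinants. The ratios $\alpha_{\lambda_\pm}$ are expressed through the integrals $\int q\cos^2$, $\int q\cos\theta\cos2\theta$, etc., hence through $D_{jl}$ and higher Fourier moments of $q$. We bound the latter by Tur\'anian-type inequalities generalized to $q$.

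\emph{Step 4: the main obstacle.} The hardest step will be the degeneracies at the endpoints. As $K\to1^+$, both sides vanish, since $\bar r_1\to0$ and $\tfrac12-\lambda_+\to0$. As $K\to\infty$ the margin also degenerates. The $O(m)$ errors therefore have to be relative errors, proportional to the leading order, rather than absolute. I would first try to factor out the common vanishing powers of $\bar r_1^2$ and $(K-1)$ explicitly, using \eqref{IBP third} to rewrite $\tfrac12-\lambda_+$ and the determinant. I would then compare normalized quantities, and separately handle the regions $K\in(1,K_1]$ and $K>K_1$, with numerical constants. Collecting the constants should then produce the explicit threshold $m_0=1.590\times10^{-4}$.
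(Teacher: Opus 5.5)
Your sign analysis (only $\tfrac12-\lambda_+$ is negative) and your reduction of the $m=0$ case to a Bertini/Tur\'anian inequality are correct. However, the actual proof is deferred to Step 4, which you only describe as something you would try, and the tool you set up in Step 3 cannot carry it out. Errors of the form $r_j=\bar r_j+O(m)$, $D_{jl}=\bar D_{jl}+O(m)$ with constants uniform in $K$ fail at both ends of $K\in(1,\infty)$.

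Near $K=1$ everything vanishes at rate $K-1$. Your normalized $m=0$ inequality is $r_1^2>2(K-1)(D_{11}-r_1^2)$, whose margin is $\asymp K-1$ since $\bar r_1^2\sim 2(K-1)$, and $\tfrac12-\lambda_+\approx 1-K$. So an absolute $O(m)$ error in $r_1^2$, or in $D_{11}=\tfrac{1+r_2}{2}$ through $\lambda_+$, is fatal once $K-1\lesssim m$.

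For large $K$, both sides of your normalized inequality are $O(K^{-2})$. They are built from $D_{11}D_{22}-D_{12}^2=O(1/K)$, the covariance determinant, and $(r_1D_{12}-r_2D_{11})^2$. Absolute $O(m)$ errors in $D_{12},D_{22}$ therefore swamp them once $K\gtrsim 1/m$. Splitting off $(K_1,\infty)$ does not help, because that region is unbounded and you give no argument uniform as $K\to\infty$.

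Nothing in your plan produces the specific value $m_0=1.590\times10^{-4}$ either. That number comes from the paper's particular constants, and a different chain of estimates would give some other, possibly smaller, range of $m$.

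The missing idea is never to perturb the moments that build the small determinants. The paper keeps the true $m$-dependent $r_j,D_{jl}$ and replaces only the eigen-data by their $m=0$ forms with multiplicative errors, via \eqref{Delta estimates}:
$(r_1\eta_1^++r_2\eta_2^+)^2\ge r_1^2D_{12}^2$,
$\tfrac12-\lambda_-\ge\tfrac12-\tfrac{81m}{4}$,
$\alpha_{\lambda_-}\ge(1-3m)D_{11}(D_{11}D_{22}-D_{12}^2)$,
$\alpha_{\lambda_+}\le(1+32m)D_{12}^2D_{11}$,
$\tfrac12-\lambda_+\ge-(1+28m)(K-1)$, and
$0\le D_{11}+\eta_2^-\le 6mD_{11}(D_{11}D_{22}-D_{12}^2)$.

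It then factors out $(D_{11}D_{22}-D_{12}^2)D_{12}^2D_{11}$ and writes $r_1D_{12}+r_2\eta_2^-=(r_1D_{12}-r_2D_{11})+r_2(D_{11}+\eta_2^-)$. The exact identity
\[
(r_1D_{12}-r_2D_{11})^2+D_{11}\big[(D_{11}-r_1^2)(D_{22}-r_2^2)-(D_{12}-r_1r_2)^2\big]=(D_{11}-r_1^2)(D_{11}D_{22}-D_{12}^2)
\]
then collapses the leading part to $(D_{11}D_{22}-D_{12}^2)\big[(K-\tfrac12)r_1^2-(K-1)D_{11}\big]$. This bracket equals $\tfrac{r_1^2}{2}-(K-1)(D_{11}-r_1^2)$. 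It is bounded below non-perturbatively by $\tfrac{r_1^2-\bar r_2}{2}>0$, using \eqref{E inequalities} (from the Bessel inequality \eqref{I property 1}), $D_{12}>r_1r_2$, $r_1\ge\bar r_1$ and $\bar r_1^2>\bar r_2$.

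Every remaining $O(m)$ term is dominated by the same bracket through \eqref{C6 inequality} and \eqref{C9 inequality}, the latter using the bounded products \eqref{C7 and C8 inequalities}. The small factor $D_{11}D_{22}-D_{12}^2$ is common to all terms and never compared with its Kuramoto value, and all errors are relative to the bracket. That is why the argument is uniform in $K>1$ with no case split.

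The only comparisons with $m=0$ are one-sided relative bounds. Examples are $r_1-\bar r_1\le \bar r_2m/((1-2m)^2K\bar r_1)$, which is $O(m\sqrt{K-1})$ near $K=1$, and $r_2<\bar r_2/(1-2m)$. These come from the implicit function theorem together with \eqref{E inequalities} and positivity of the third Fourier coefficient of $q$. Estimates of this relative type, not absolute $O(m)$ ones, are what your Step 3 would need to supply.

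A minor point: $\alpha_{\lambda_\pm}=(\eta_1^\pm)^2D_{11}+2\eta_1^\pm\eta_2^\pm D_{12}+(\eta_2^\pm)^2D_{22}$, so no higher moments of $q$ or generalized Tur\'anian bounds are needed for them.
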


{The reader may check that the left-hand side of \eqref{Main Det} equals
\begin{align}
    2\Big((D_{11}-r_1^2)(D_{22}-r_2^2) - (D_{12}-r_1r_2)^2\Big) \det \fC.
\end{align}
Note that the prefactor is positive by using the covariance representation of $D_{ij}-r_ir_j$ and Cauchy-Schwarz, and therefore $\det \fC >0$ by \cref{lem:maindet}. To see now that $\fC$ is positive definite, note that $\fC^{--}>0$ since $\lambda_{-}<1/2$. Positive definiteness now follows from Sylvester's criterion.}

For item (2), which will conclude the proof of the proposition, let $\gamma>0$ denote the smallest eigenvalue of $\fC$. Then
\begin{align}
    &\frac{a_0^2}{2}\|1+\hat{v}_0\|_{L_{1/q}^2}^2 + \Big(\frac12-\mu\Big)\|u_\mu\|_{L_{1/q}^2}^2 + \Big(\frac12-\lambda_+\Big)\|u_{\lambda_+}\|_{L_{1/q}^2}^2 + \Big(\frac12-\lambda_-\Big)\|u_{\lambda_-}\|_{L_{1/q}^2}^2 \nonumber\\
    &=\begin{bmatrix}
            c_{\lambda_{+}} & c_{\lambda_{-}} 
        \end{bmatrix}
        \fC
        \begin{bmatrix}
            c_{\lambda_{+}}\\
            c_{\lambda_{-}} 
        \end{bmatrix}
        + \Big({1 \over 2} - \mu \Big) \alpha_{\mu} c_{\mu}^{2} \nonumber\\
   &\ge \frac{\gamma}{\alpha_{\lambda_+}}\|u_{\lambda_+}\|_{L_{1/q}^2}^2 + \frac{\gamma}{\alpha_{\lambda_-}}\|u_{\lambda_-}\|_{L_{1/q}^2}^2 + \Big(\frac12-\mu\Big)\|u_{\mu}\|_{L_{1/q}^2}^2.\label{eq:Dlb1}
\end{align}
Note that it is tautological that $\alpha_{\lambda_\pm}>0$ (recall  \eqref{eq:alladef}). We cannot conclude the proof just yet because the last line is missing the term $a_0^2\|1+\hat{v}_0\|_{L_{1/q}^2}^2$. However, recalling the expression \eqref{eq:a0} for $a_0$ and \eqref{eq:opt in V0} for $\|1+\hat{v}_0\|_{L_{1/q}^2}^2$, we see that
\begin{align}
    &a_0^2\|1+\hat{v}_0\|_{L_{1/q}^2}^2 \nonumber\\
    &=  \frac{D_{11}D_{22} - D_{12}^{2}}{(D_{11} - r_{1}^{2})(D_{22} - r_{2}^{2}) -(D_{12} - r_{1}r_{2})^{2}}  \big((r_{1} \eta_{1}^{+} + r_{2} \eta_{2}^{+}) c_{\lambda_{+}}+(r_{1} \eta^{-}_{1} + r_{2} \eta^{-}_{2}) c_{\lambda_{-}}\big)^{2} \nonumber\\
    &\le \frac{2(D_{11}D_{22} - D_{12}^{2})}{(D_{11} - r_{1}^{2})(D_{22} - r_{2}^{2}) -(D_{12} - r_{1}r_{2})^{2}}\big((r_{1} \eta_{1}^{+} + r_{2} \eta_{2}^{+})^2 c_{\lambda_{+}}^2 + (r_{1} \eta_{1}^{-} + r_{2} \eta_{2}^{-})^2 c_{\lambda_{-}}^2\big) \nonumber\\
    &\le \frac{2(D_{11}D_{22} - D_{12}^{2})}{(D_{11} - r_{1}^{2})(D_{22} - r_{2}^{2}) -(D_{12} - r_{1}r_{2})^{2}}\Big( \frac{(r_{1} \eta_{1}^{+} + r_{2} \eta_{2}^{+})^2}{\gamma} \frac{\gamma}{\alpha_{\lambda_+}}\|u_{\lambda_{+}}\|_{L_{1/q}^2}^2 \nonumber\\
    &\phantom{=}\qquad+\frac{(r_{1} \eta_{1}^{-} + r_{2} \eta_{2}^{-})^2}{\gamma}\frac{\gamma}{\alpha_{\lambda_-}}\|u_{\lambda_{-}}\|_{L_{1/q}^2}^2\Big) ,\label{eq:Dlb2}
\end{align}
where we have used the elementary inequality $(x+y)^{2} \leq 2 (x^{2}+y^{2})$. Combining the inequalities \eqref{eq:Dlb1}, \eqref{eq:Dlb2}, we find that
\begin{multline}
    \frac{a_0^2}{2}\|1+\hat{v}_0\|_{L_{1/q}^2}^2 + \Big(\frac12-\mu\Big)\|u_\mu\|_{L_{1/q}^2}^2 + \Big(\frac12-\lambda_+\Big)\|u_{\lambda_+}\|_{L_{1/q}^2}^2 + \Big(\frac12-\lambda_-\Big)\|u_{\lambda_-}\|_{L_{1/q}^2}^2  \\
    \ge C_0\Big(a_0^2\|1+\hat{v}_0\|_{L_{1/q}^2}^2 + \|u_\mu\|_{L_{1/q}^2}^2  + \|u_{\lambda_+}\|_{L_{1/q}^2}^2 + \|u_{\lambda_-}\|_{L_{1/q}^2}^2\Big),
\end{multline}
where 
\begin{align}\label{eq:C0def}
    C_0 \coloneqq \min\Bigg(\frac12-\mu, {\frac{\gamma}{\frac{2(D_{11} D_{22} - D_{12}^{2})}{(D_{11}-r_1^2)(D_{22}-r_2^2)-(D_{12}-r_1r_2)^2 }\max(r_1\eta_1^+ + r_2\eta_2^+, r_1\eta_1^- + r_2\eta_2^-)^{2} + \min(\alpha_{\lambda_+}^{-1}, \alpha_{\lambda_-}^{-1})}}\Bigg).
\end{align}
Applying this lower bound to the first line on the right-hand side of the relation \eqref{eq:Drewrite}, we conclude that
\begin{align}
    \mathbf{D}(u) &\ge   C_0\Big(a_0^2\|1+\hat{v}_0\|_{L_{1/q}^2}^2 + \|u_\mu\|_{L_{1/q}^2}^2  + \|u_{\lambda_+}\|_{L_{1/q}^2}^2 + \|u_{\lambda_-}\|_{L_{1/q}^2}^2\Big)\nonumber\\
    &\phantom{=} + \frac{a_0^2}{2}\Big(\|1+v_0\|_{L_{1/q}^2}^2 - \|1+\hat{v}_0\|_{L_{1/q}^2}^2\Big) \nonumber\\
    &\ge \min\Big(C_0,\frac12\Big)\|u-u_{1/2}\|_{L_{1/q}^2}^2.
\end{align}
This completes the proof of \cref{Dirichlet lower}.

\subsection{Conclusion of the proof}\label{ssec:stab_conc}

We now have all the necessary components to conclude the proof of \cref{stability thrm}, which follows from a Poincar\'{e} inequality.

Set $e\coloneqq \frac{q'}{ \| q' \|_{H^{-1}_{1/q}}}$. By orthogonal projection, observe that
\begin{align}
    \|u-\inner{u}{e}_{H_{1/q}^{-1}}e\|_{H_{1/q}^{-1}} = \inf_{c\in\bR} \|u-cq'\|_{H_{1/q}^{-1}} &= \inf_{c\in\bR} \|\p_{\theta}^{-1}(u-cq')\|_{L_{1/q}^2} \nonumber\\
    &\le \sqrt{\frac{\max q}{\min q}}\inf_{c\in\bR} \|u-cq'\|_{L_{1/q}^2} \nonumber\\
    &=\sqrt{\frac{\max q}{\min q}}\|u-u_{1/2}\|_{L_{1/q}^2}. \label{eq:Poincare}
\end{align}
where we have used Plancherel's theorem, that $V_{1/2} = \mathrm{span}\{q'\}$, and implicitly that $u$ and $q'$ have zero mean. Combining the estimate \eqref{eq:Poincare} with \cref{Dirichlet lower} and performing a little bookkeeping, we see that the proof of \cref{stability thrm} is complete.

\section{Technical estimates}\label{sec:Tech}
This section is devoted to the technical estimates used in the previous section, in particular the proof of \cref{lem:maindet}, which is crucial to proving the lower bound for the Dirichlet form. As previously commented, the proof of \cref{lem:maindet} relies on several, rather involved auxiliary approximations and estimates that are split across the following subsections.

The remainder of the section is organized as follows. In \cref{ssec:Tech_var}, we prove some  bounds for the covariance of Fourier coefficients of $q$. In \cref{estimation of r1 and r2}, we show first-order asymptotics for $r_1,r_2$ with respect to the parameter $m$, which will allow us to approximate various quantities in terms of their $m=0$ values when $m$ is sufficiently small. In \cref{ssec:Tech_eval}, we prove some elementary bounds for the eigenvalues $\lambda_{\pm}, \mu$ of the operator $qJ\ast(\cdot)$ (recall their definitions from \eqref{lambda def}, \eqref{mu def}, respectively). In \cref{ssec:Tech_omni}, we state and prove an omnibus lemma (\cref{{technical inequalities}}) that will allow us to estimate individual quantities appearing in the proof of \cref{lem:maindet}. Finally, in \cref{ssec:tech_maindet}, we combine all of the preceeding individual results to prove \cref{lem:maindet}.

 \subsection{Covariance bounds}\label{ssec:Tech_var}

The following lemma establishes key inequalities for the expressions $D_{ij} - r_{i}r_{j}$, which are consequences of their definitions \eqref{eq:DbarDdef}.  

\begin{lemma} \label{E inequalities lemma}
    Let $K>1$. Then for $j=1,2$,
        \begin{align}\label{E positive} 
            0< D_{ii} - r_i^2  < 1 \quad \text{and} \quad -1+\indic_{m\le \frac14} < D_{ij} - r_ir_j.
        \end{align}
        In particular, $D_{ij}- r_i r_j>0$ if $m\le {1/4}$. Furthermore,        
        \begin{align}\label{E inequalities}
            D_{11}-r_{1}^{2} \leq {1 \over 2K} - Kr_{1}(D_{12}-r_{1}r_{2}), \quad
            D_{22}-r_{2}^{2} \leq \frac{1}{K} -\frac{D_{12}-r_{1}r_{2}}{Kr_{1}}.
        \end{align}

\end{lemma}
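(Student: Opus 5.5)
The plan is to treat the two groups of claims separately. The bounds \eqref{E positive} are soft facts about the law $\theta\sim q$. The sharper bounds \eqref{E inequalities} should be \cref{I technical lemmas} (2)--(3), evaluated at $(x,y)=(2Kr_1,2Kmr_2)$ and rewritten in probabilistic language. Throughout, $(r_1,r_2)\in(0,1)^2$ is the solution from \cref{exact multiplicities}; this is where $m\le 1/4$ enters.

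\emph{Step 1: soft bounds.} By \eqref{eq:DbarDdef} and $r_j=\int\cos(j\theta)\,dq$, we have $D_{ii}-r_i^2=\Var_{\theta\sim q}[\cos i\theta]$ and $D_{12}-r_1r_2=\cov_{\theta\sim q}[\cos\theta,\cos2\theta]$. Since $q$ is smooth and strictly positive, $\cos(i\theta)$ is not $q$-a.s.\ constant, so the variance is positive. It is also strictly less than $\bE[\cos^2 i\theta]$ unless $r_i=0$, and $\bE[\cos^2 i\theta]<1$ because $q$ has full support; hence $\Var<1$. Cauchy--Schwarz then gives $|D_{12}-r_1r_2|<1$, which is the lower bound $>-1$ for all $m$. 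The improved lower bound $>0$ when $m\le 1/4$ is deferred to Step 2.

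\emph{Step 2: translating the Bessel identities.} Write $I_n=I_n(x,y)$ with $x=2Kr_1$ and $y=2Kmr_2$, so that $q=e^{x\cos\theta+y\cos2\theta}/(2\pi I_0)$. Differentiating under the integral gives
\[
\partial_x I_0 = I_1,\quad \partial_x I_1 = I_0 D_{11},\quad \partial_{xy}I_0 = I_0 D_{12},\quad \partial_{yy}I_0 = I_0 D_{22},
\]
together with $I_1/I_0=r_1$ and $I_2/I_0=r_2$ by \eqref{sc equation}. From these,
\[
\frac{\partial_{xy}I_0\,I_0-I_1I_2}{I_0^2}=D_{12}-r_1r_2,\qquad \frac{\partial_{yy}I_0\,I_0-I_2^2}{I_0^2}=D_{22}-r_2^2 .
\]
Similarly,
\[
\frac{I_1(I_0+xI_1)-xI_0\,\partial_xI_1}{I_0^2}=r_1-x\,(D_{11}-r_1^2).
\]
Since $x,y>0$ when $m>0$, \cref{I technical lemmas}(2) applies. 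Its right-hand strict inequality gives $D_{12}-r_1r_2>0$, completing \eqref{E positive}. Its left-hand inequality reads
\[
\frac{2}{x^2}\big(r_1-x(D_{11}-r_1^2)\big)\ge D_{12}-r_1r_2 .
\]
Substituting $x=2Kr_1$ and dividing by $1/(2K^2r_1)$, this rearranges to the first inequality in \eqref{E inequalities}. Likewise \cref{I technical lemmas}(3) reads
\[
\frac{r_1}{x}-\frac12(D_{22}-r_2^2)>\frac{1}{x}(D_{12}-r_1r_2).
\]
With $x=2Kr_1$, multiplying by $2$ gives exactly the second inequality in \eqref{E inequalities}.

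\emph{Step 3: the edge case and the main obstacle.} For $m=0$ we have $y=0$ and $r_2=\bar r_2$, so \cref{I technical lemmas} is not literally applicable. Here I would pass to the limit $m\to0^+$, using continuity of $(r_1,r_2)$ in $m$ (implicit function theorem via the nondegenerate Hessian of \cref{positive hessian in first}). Alternatively, one can verify directly with the explicit Kuramoto quantities of \cref{D expression lemma}; in that case one must check the positivity of the $m=0$ covariance, presumably via a Tur\'an inequality from \cref{turanian lemma}. I expect the only real care to lie in this bookkeeping: identifying the Bessel expressions with the moments $D_{ij}$, and handling strict versus non-strict inequalities at $m=0$. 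The main difficulty is already encapsulated in the cited \cref{I technical lemmas}.
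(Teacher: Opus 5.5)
Your proposal is correct and follows the paper's proof essentially line by line: you get \eqref{E positive} from the covariance representation and Cauchy--Schwarz, then apply \cref{I technical lemmas}(2)--(3) at $(x,y)=(2Kr_1,2Kmr_2)$ to obtain strict positivity of $D_{12}-r_1r_2$ and both inequalities in \eqref{E inequalities}, and you handle $m=0$ by letting $m\to0^+$. You are right that the limit only gives $D_{12}-r_1r_2\ge 0$ at $m=0$, and strictness there does follow from the Tur\'an inequality, since $\bar D_{12}-\bar r_1\bar r_2=(\bar r_1^2-\bar r_2)/(K\bar r_1)=T_1(2K\bar r_1)/\big(K\bar r_1 I_0(2K\bar r_1)^2\big)>0$; for continuity of $(r_1,r_2)$ at $m=0$, however, apply the implicit function theorem to the self-consistency map, whose Jacobian at $m=0$ is lower triangular with determinant $1-2K(\bar D_{11}-\bar r_1^2)=2K(\bar r_1^2-\bar r_2)>0$, rather than to $\nabla^2F_{K,m}$, which degenerates as $m\to0$.
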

\begin{proof}
   We assume that $m>0$ so that in our application of \cref{I technical lemmas}, the variable $y>0$. The case $m=0$ follows from letting $m \to 0^+$.
   
   For \eqref{E positive}, we write $D_{ij}-r_{i}r_{j} = \cov_{\theta \sim q}[\cos(i\theta), \cos(j\theta)]$, which implies that $D_{ii}-r_i^2\in (0,1)$ and $D_{ij}-r_{i}r_j \in (-1,1)$. 
    Applying \cref{I technical lemmas}\eqref{I property 1} with  $x=2Kr_1$ and $y=2Kmr_2$ completes the argument.\footnote{Here (and only here), we  use the assumption that $m\le 1/4$ to know that $r_2>0$ in order to show that $D_{12}-r_1r_2>0$.}
    
    For \eqref{E inequalities}, we apply \eqref{I property 1} with $x=2Kr_1$, $y=2Kmr_2$  to obtain
    \begin{align}
       \frac{1}{2K^{2}r_{1}} \left(1 - 2K(D_{11} - r_{1}^{2}) \right) \ge D_{12}-r_{1}r_{2}.
    \end{align}
    Rearranging yields the first inequality of \eqref{E inequalities}. The second inequality follows from applying \eqref{I property 2} with $x=2Kr_{1}$, $y = 2Kmr_{2}$. 
\end{proof}

\subsection{Approximation for $r_{j}$} \label{estimation of r1 and r2}
We now establish approximations for $r_1,r_2$ in terms of $\bar{r}_{1},\bar{r}_{2}$, the latter properties being relatively well studied.

By \cite{bertini}, it is known that
\begin{align}\label{Kuramoto relation}
   \bar{r}_2 = 1- {1 \over K} < \bar{r}_{1}^{2} < 1 - {1 \over 2K} = \bar{D}_{11}.
\end{align}
The lower bound is sharp as $K \to 1,$ while the upper bound is sharp as $K \to \infty.$ The next lemma strengthens the lower bound for large $K$ and the upper bound for $K$ near $1.$ 

\begin{lemma} \label{Kuramoto better estimates}
    For $K>1,$ we have \begin{align} \label{Kuramoto estimate}
        1 - {1 \over 2K} - {1 \over 2K^{2}} < \bar{r}_{1}^{2} < 1 - {1 \over 2K} \quad \text{and} \quad 1 - {1 \over K} < \bar{r}_{1}^{2} < 2 \left(1 - {1 \over K}\right).
    \end{align}
    In particular, $\frac{\bar{r}_{1}^{2}}{\bar{r}_{2}} \approx 1$.
\end{lemma}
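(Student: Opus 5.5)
The plan is to work entirely with the one-variable Bessel functions, using $\bar r_1 = I_1(x)/I_0(x)$ and $\bar r_2 = I_2(x)/I_0(x) = 1-\frac1K$, where $x \coloneqq 2K\bar r_1>0$. The two outer bounds $1-\frac1K<\bar r_1^2<1-\frac1{2K}$ are already contained in \eqref{Kuramoto relation}. So only two new inequalities are needed: the lower bound $\bar r_1^2>1-\frac1{2K}-\frac1{2K^2}$ and the upper bound $\bar r_1^2<2(1-\frac1K)$. Both should follow from the Tur\'anian estimates of \cref{turanian lemma}, combined with the three-term recurrence $I_{n-1}(x)-I_{n+1}(x)=\frac{2n}{x}I_n(x)$.

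For the upper bound, I would apply \cref{turanian lemma}\eqref{turanian 2} with $n=1$, which gives $I_1^2-I_0I_2\le \frac12 I_1^2$. Dividing by $I_0^2$ yields $\bar r_1^2-\bar r_2\le \frac12\bar r_1^2$, that is, $\bar r_1^2\le 2\bar r_2=2(1-\frac1K)$. To obtain the strict inequality I would either invoke the strict form of this Tur\'an-type bound for $x>0$, which holds in the cited references, or argue that equality would force an identity between Bessel functions that fails for $x>0$.

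For the lower bound, I would use the monotonicity \cref{turanian lemma}\eqref{monotonicity of turanian} in the form $T_2(x)\le T_1(x)$, i.e.
\begin{align*}
I_2^2-I_1I_3\le I_1^2-I_0I_2 .
\end{align*}
The term $I_3$ is eliminated via the recurrence $I_3=I_1-\frac4x I_2$. After dividing by $I_0^2$, the left side becomes $\bar r_2^2-\bar r_1^2+\frac{4\bar r_1}{x}\bar r_2$. Here $\frac{4\bar r_1}{x}=\frac2K$, which is exactly where the self-consistency relation $x=2K\bar r_1$ enters. The inequality then rearranges to
\begin{align*}
2\bar r_1^2\ge \bar r_2\Big(\bar r_2+1+\frac2K\Big)=\Big(1-\frac1K\Big)\Big(2+\frac1K\Big)=2-\frac1K-\frac1{K^2},
\end{align*}
which is the claimed bound.

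The main obstacle is again strictness, since \cref{turanian lemma} is stated with non-strict inequalities. I would first check whether the cited sources give strict decrease of $n\mapsto T_n(x)$ for $x>0$. Failing that, I would note that equality $T_1=T_2$ at a single $x>0$ can be excluded directly, for instance via the series expansion of $T_1-T_2$. Since strictness does not matter for the subsequent use, namely that $\bar r_1^2/\bar r_2\approx1$, I would accept non-strict inequalities if this step becomes cumbersome.
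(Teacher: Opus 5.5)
Your proposal is correct and is essentially the paper's proof. The lower bound $\bar r_1^2 > 1-\frac{1}{2K}-\frac{1}{2K^2}$ comes from $T_1(x) > T_2(x)$ at $x = 2K\bar r_1$; the paper writes this as $\bar r_1\bar D_{12}-\bar r_2\bar D_{11} = \frac{T_1-T_2}{2I_0^2}$ and substitutes $\bar D_{12} = \bar r_1 - \frac{\bar r_2}{K\bar r_1}$, which is exactly your elimination of $I_3$ via the recurrence. The upper bound $\bar r_1^2 < 2(1-\frac1K)$ comes from \cref{turanian lemma}\eqref{turanian 2} with $n=1$, and the two outer bounds are taken from \eqref{Kuramoto relation}, just as you propose; the paper also asserts the strict inequalities without further justification, so your caveat about strictness applies equally to its argument.
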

\begin{proof}
    For the first inequality, consider the identity \begin{align}
        \bar{r}_{1} \bar{D}_{12} - \bar{r}_{2} \bar{D}_{11} &= \frac{1}{2I_{0}^{2}} \left[ (I_{1}^{2} - I_{0} I_{2}) -  (I_{2}^{2} - I_{1} I_{3})\right] (2K\bar{r}_{1}) =\Big(\frac{ T_{1} - T_{2}}{2I_{0}^{2}}\Big) (2K\bar{r}_{1}),
    \end{align}
    where $T_{n}$ is the Tur\'anian introduced in \cref{turanian lemma}. Hence, $\bar{r}_{1} \bar{D}_{12} - \bar{r}_{2} \bar{D}_{11} > 0.$ Rewriting this expression in $\bar{r}_{1},\bar{r}_{2}$ using the identities $ \bar{D}_{11} = \frac{1+ \bar{r}_{2}}{2}$, $\bar{D}_{12} = \bar{r}_{1} - \frac{\bar{r}_{2}}{K\bar{r}_{1}}$,  we obtain
    \begin{align}
        \bar{r}_{1} \bar{D}_{12} - \bar{r}_{2} \bar{D}_{11} = \bar{r}_{1}^{2} - \bar{r}_{2} \left(1 + \frac{1}{2K} \right) = \bar{r}_{1}^{2} - \left(1 - {1 \over 2K} - {1 \over 2K^{2}}\right),
    \end{align}
    where the final equality holds by $\bar{r}_{2} = 1-{1 / K}.$     This establishes the desired lower bound.

    For the second inequality, we only need to prove the upper bound. This follows from the Tur\'anian inequality \cref{turanian lemma}\eqref{turanian 2} with $x=2K\bar{r}_{1}$.  
\end{proof}

For our model, we seek inequalities similar to those in \cref{Kuramoto better estimates} for $r_{1},r_{2}$ with general $m.$ However, analyzing the solutions to the two-parameter self-consistency equation directly is challenging. Instead, we approximate $r_1,r_2$ as linear perturbations of $\bar{r}_1,\bar{r}_{2}$ in terms of $m$ and leverage the known properties of $\bar{r}_1,\bar{r}_2$. {The restriction $m\le {1/4}$ is to ensure the quantity $D_{12}-r_1r_2>0$ as in \cref{E inequalities lemma}\eqref{E positive}. We also need to ensure strict convexity of the free energy at $(r_1,r_2)$, but this holds for $m\le {1/2}$.} 

\begin{prop}
Let $K>1$ and $m \in {[0, 1/4]}$. The functions $r_{j}$ are strictly increasing in $m$. In particular, $r_{j} \geq \bar{r}_{j}$.     
Furthermore, we have the bounds
\begin{align}
    &D_{12} < r_{1}, \label{D12 and r1 relation} \\
    &r_{2} \leq \bar{r}_{2} + \frac{4\bar{r}_{2}}{(1-2m)^2K} m, \label{r2 upper}\\
    &r_{1} \leq \bar{r}_{1} + \frac{\bar{r}_{2}}{(1-2m)^{2}K\bar{r}_{1}} m \leq  \bar{r}_{1} + \frac{4\bar{r}_{2}}{K\bar{r}_{1}} m . \label{r1 upper}
\end{align}
\end{prop}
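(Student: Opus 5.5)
The plan is to view $(r_1,r_2)=(r_1(K,m),r_2(K,m))$ as the nondegenerate critical point of $F_{K,m}$ and to differentiate the self-consistency system \eqref{sc equation} in $m$ via the implicit function theorem. Write \eqref{sc equation} as $r=\Phi(r,m)$ with $\Phi_j(r,m)=\bE_{\theta\sim q_{r_1,r_2}}[\cos j\theta]$. Set $c_{ij}\coloneqq D_{ij}-r_ir_j=\cov_{q}[\cos i\theta,\cos j\theta]$. Direct differentiation of the exponent $2Kr_1\cos\theta+2Kmr_2\cos 2\theta$ gives
\begin{align*}
\nabla_r\Phi = 2K\begin{bmatrix} c_{11} & m c_{12}\\ c_{12} & m c_{22}\end{bmatrix}, \qquad \partial_m\Phi = 2Kr_2\begin{bmatrix} c_{12}\\ c_{22}\end{bmatrix}.
\end{align*}
Comparing with \eqref{hessian}, the matrix $I-\nabla_r\Phi$ is the Hessian $\nabla^2F_{K,m}$ after rescaling its second column. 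By \cref{positive hessian in first}, which applies since $m\le 1/4$ and $(r_1,r_2)\in(0,1)^2$, this Hessian is positive definite. This gives three facts: $\det(I-\nabla_r\Phi)>0$, $1-2Kc_{11}>0$, and $1-2Kmc_{22}>0$.

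Hence $\partial_m r=(I-\nabla_r\Phi)^{-1}\partial_m\Phi$ exists, and $r$ is $C^1$ in $m$ along the unique branch of \cref{exact multiplicities}. Using the adjugate,
\begin{align*}
\partial_m r_1 = \frac{2Kr_2\big[(1-2Kmc_{22})c_{12}+2Kmc_{12}c_{22}\big]}{\det(I-\nabla_r\Phi)},\qquad \partial_m r_2 = \frac{2Kr_2\big[2Kc_{12}^2+(1-2Kc_{11})c_{22}\big]}{\det(I-\nabla_r\Phi)}.
\end{align*}
Both numerators are positive. This uses $c_{12}>0$ from \cref{E inequalities lemma}\eqref{E positive}, which is exactly where $m\le 1/4$ enters, together with $c_{22}>0$ and $r_2>0$. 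So each $r_j$ is strictly increasing in $m$, and integrating from $m=0$ gives $r_j\ge\bar r_j$.

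For \eqref{D12 and r1 relation}, use the $D_{12}$ identity of \cref{D expression lemma}, which reads $r_1-D_{12}=r_1\frac{r_2-\bar r_2}{4mr_2}$. For $m>0$, strict monotonicity gives $r_2>\bar r_2$, hence $D_{12}<r_1$. For $m=0$, use $\bar D_{12}=\bar r_1-\bar r_2/(K\bar r_1)<\bar r_1$.

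For the quantitative bounds \eqref{r2 upper}, \eqref{r1 upper}, integrate $\partial_m r_j$ over $[0,m]$ and bound the integrand uniformly in $m'\in[0,m]$.
\begin{itemize}
\item Numerators: bound $c_{11},c_{22},c_{12}$ using \eqref{E inequalities}. These give $c_{11}\le\frac1{2K}-Kr_1c_{12}$ and $c_{22}\le\frac1K-\frac{c_{12}}{Kr_1}$, in particular $c_{22}\le 1/K$.
\item Diagonal factor: $c_{22}\le 1/K$ gives $1-2Km'c_{22}\ge 1-2m'\ge 1-2m$.
\item Determinant: expand
\begin{align*}
\det(I-\nabla_r\Phi) = (1-2Kc_{11})(1-2Kmc_{22})-4K^2mc_{12}^2.
\end{align*}
Then use $1-2Kc_{11}\ge 2K^2r_1c_{12}$ from the first inequality of \eqref{E inequalities}.
\item Small factors: $r_2\le\bar r_2+O(m)$, obtained by bootstrapping, and \cref{Kuramoto better estimates}, which says $\bar r_1^2\approx\bar r_2$. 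Together these should produce the factors $\bar r_2/K$ and $\bar r_2/(K\bar r_1)$.
\end{itemize}
The target is the differential inequality $\partial_m r_2\le\frac{4\bar r_2}{K(1-2m)^2}$. Since $\int_0^m(1-2s)^{-2}ds=m/(1-2m)\le m/(1-2m)^2$, this would give \eqref{r2 upper}, and the analogous bound for $r_1$ would give \eqref{r1 upper}. The second inequality in \eqref{r1 upper} is then trivial, since $(1-2m)^{-2}\le 4$ for $m\le1/4$.

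The main obstacle is the lower bound on the determinant in terms of $c_{12}$ that cancels the numerators cleanly. The $-4K^2mc_{12}^2$ term must be absorbed using $1-2Kc_{11}\ge 2K^2r_1c_{12}$ together with $c_{12}\le r_1$-type bounds. Alternatively, I may get a cleaner route by substituting the exact identities \eqref{IBP first}–\eqref{IBP third} to express $\det$ and $c_{ij}$ through $r_1,r_2,\bar r_2$. If the constants do not close directly, the fallback is a Grönwall-type bootstrap on $r_2-\bar r_2$ using the identity $r_1-D_{12}=r_1(r_2-\bar r_2)/(4mr_2)$.
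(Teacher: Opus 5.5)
\textbf{The monotonicity argument and \eqref{D12 and r1 relation} are fine; the gap is in the quantitative bounds, above all \eqref{r2 upper}.} Your monotonicity argument (implicit function theorem for $r=\Phi(r,m)$, the adjugate formulas, signs from $c_{12}>0$) is the paper's argument in slightly different notation. Your derivation of \eqref{D12 and r1 relation} from the $D_{12}$ identity is also correct. One minor slip: $\nabla^2F_{K,m}=2K\,\mathrm{diag}(1,m)(I-\nabla_r\Phi)$, so it is the second \emph{row}, not column, that is rescaled.

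Your plan to prove $\partial_m r_2\le 4\bar r_2/(K(1-2m)^2)$ from \eqref{E inequalities} cannot close. Since $\det(I-\nabla_r\Phi)\le(1-2Kc_{11})(1-2Kmc_{22})\le 1-2Kc_{11}$, your own formula gives $\partial_m r_2\ge 2Kr_2c_{22}$. So the target requires $c_{22}=O(K^{-2})$. But \eqref{E inequalities} only give $c_{22}<1/K$, while the true size is of order $K^{-2}$ as $K\to\infty$, so these inequalities are far from sharp there. Combining the inequalities you list gives only $\partial_m r_2\le 2r_2/(1-2m)$, which misses the factor $1/K$. The unspecified bootstrap $r_2\le\bar r_2+O(m)$ does not repair this. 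Moreover, \eqref{r1 upper} also needs a bound on $r_2/r_1$ along the path, which again comes down to controlling $r_2$.

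\textbf{The missing idea is to use the third and fourth Fourier modes of $q$ instead of a differential inequality.} Since $\cos3\theta=2\cos\theta\cos2\theta-\cos\theta$ and $\cos4\theta=2\cos^2 2\theta-1$, the $D_{12}$ and $D_{22}$ formulas of \cref{D expression lemma} give
$\int\cos3\theta\,dq=r_1\bigl(1-\frac{r_2-\bar r_2}{2mr_2}\bigr)$ and $\int\cos4\theta\,dq=1-\frac{1}{Km}+\frac{r_1^2(r_2-\bar r_2)}{4m^2r_2^2}$.
Positivity of $I_3(2Kr_1,2Kmr_2)$ from \cref{I technical lemmas} then gives $r_2<\bar r_2/(1-2m)$. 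Equivalently $D_{12}>r_1/2$, so your identity $r_1-D_{12}=r_1(r_2-\bar r_2)/(4mr_2)$ was one step away. The trivial bound $\int\cos4\theta\,dq<1$ gives $r_2-\bar r_2<4mr_2^2/(Kr_1^2)$. Together with $r_1\ge\bar r_1$ and $\bar r_2\le\bar r_1^2$, this is \eqref{r2 upper}, with no integration needed.

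For \eqref{r1 upper}, your integration route is the paper's. Your ``main obstacle'' is resolved by using the second inequality of \eqref{E inequalities} in full, not just $c_{22}\le 1/K$. It gives $1-2Kmc_{22}\ge 1-2m+2mc_{12}/r_1$. Multiplying by $1-2Kc_{11}\ge 2K^2r_1c_{12}$ gives $(1-2Kc_{11})(1-2Kmc_{22})\ge 2K^2r_1c_{12}(1-2m)+4K^2mc_{12}^2$. The cross term $-4K^2mc_{12}^2$ in the determinant therefore cancels exactly. Hence $\partial_m r_1\le r_2/(K(1-2m)r_1)\le\bar r_2/(K(1-2m)^2\bar r_1)$, using $r_2<\bar r_2/(1-2m)$ and $r_1\ge\bar r_1$ at each intermediate $m$.
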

\begin{proof}
    The $m=0$ case may be directly check using \eqref{Kuramoto estimate}. We assume {$m \in (0, 1/4]$} and first prove the monotonicity. Fix $K>1.$ Recall that $(r_{1}, r_{2})$ is a  critical point in $(0, 1)^{2}$ {(unique if $m\le 1/4$)} of the free energy $F(r_1,r_2;m)$ introduced in \eqref{F def}.
     Using the implicit function theorem, the derivative with respect to $m$ of the vector-valued function $(r_1,r_2)$ is  \begin{align}\label{eq:pmr1r2}
        \left[{\partial r_{1} \over \partial m}, \ {\partial r_{2} \over \partial m} \right]^{\intercal} = - \left( \nabla^{2} F(r_1, r_2; m) \right)^{-1} {\partial \over \partial m} \nabla F(r_1, r_2; m).
    \end{align}
    The Hessian of the free energy at $(r_1, r_2)$ is  \begin{align}
        \nabla^{2} F(r_1, r_2; m) = 2K 
        \begin{bmatrix}
            1-2K(D_{11} - r_{1}^{2}) & - 2Km (D_{12} - r_{1}r_{2}) \\
            - 2Km (D_{12} - r_{1}r_{2}) & m(1-2Km (D_{22} - r_{2}^{2}))
        \end{bmatrix},
    \end{align}
which is positive definite by \cref{positive hessian in first}. This implies, again by Sylvester's criterion and positive determinant, that 
    \begin{equation}
    \begin{aligned}
        &1 - 2K(D_{11} - r_{1}^{2}) >0, \\
        &(1-2K(D_{11} - r_{1}^{2}))(1-2Km(D_{22} - r_{2}^{2})) > 4K^{2}m (D_{12} - r_{1} r_{2})^{2}, \\
        &1-2Km(D_{22} - r_{2}^{2}) >0.
    \end{aligned}
    \end{equation}
    {Since $D_{12}-r_1r_2>0$ by \cref{E inequalities lemma}\eqref{E positive}, it follows that all entries of $(\nabla^{2} F(r_1, r_2; m))^{-1}$ are positive with }
    \begin{align}
         \left(\nabla^{2} F(r_1, r_2; m)\right)^{-1} = \frac{2K}{\det \nabla^{2} F(r_1, r_2; m)} 
        \begin{bmatrix}
            m(1-2Km(D_{22} - r_{2}^{2})) &  2Km (D_{12}-r_{1}r_{2})\\
            2Km (D_{12} - r_{1}r_{2}) & 1-2K(D_{11} - r_{1}^{2})
        \end{bmatrix}.
    \end{align}
    On the other hand, 
    \begin{align}
        \frac{\partial}{\partial m} \nabla F(r_1, r_2; m) = \begin{bmatrix}
            -4K^{2}r_2 (D_{12} - r_{1}r_{2})\\
            -4K^{2}mr_2(D_{22}-r_{2}^{2})
        \end{bmatrix}.
    \end{align}
    Substituting this and the inverse Hessian matrix into the formula \eqref{eq:pmr1r2} for $ \big({\partial r_{1} \over \partial m}, \frac{\partial r_2}{\partial m} \big)$, we observe that both $\frac{\partial r_i}{\partial m}>0$. This proves that $r_1,r_2$ are strictly increasing in $m$.

    \medskip
    To prove \eqref{r2 upper}, we consider the third and fourth Fourier coefficients 
    \begin{equation}
    \begin{aligned}
        {I_{3}(2Kr_1, 2Kmr_2) \over I_{0}(2Kr_1, 2Kmr_2)} &= \int \cos 3\theta \ q(\theta) d\theta = r_{1} \left(1 - \frac{r_2 - \bar{r}_{2}}{2mr_{2}} \right),\\ {I_{4}(2Kr_1, 2Kmr_2) \over I_{0}(2Kr_1, 2Kmr_2)} &= \int \cos 4\theta \ q(\theta) d\theta=1 + \frac{r_{1}^{2} (r_2 - \bar{r}_{2})}{4m^{2} r_{2}^{2}} - \frac{1}{Km}.
    \end{aligned}
    \end{equation}
    From $I_{3}/I_{0} >0$, we obtain
    \begin{align}\label{temp r_2}
        r_{2} < \frac{\bar{r}_{2}}{1-2m}.
    \end{align}
    Moreover $I_{4}/ I_{0} < 1$ leads to
    \begin{align}\label{temp r_2'}
        r_{2} - \bar{r}_{2} < \frac{4r_{2}^{2}}{Kr_{1}^{2}} \ m
         < \frac{4}{K(1-2m)^{2}} \ \frac{\bar{r}_{2}^{2}}{\bar{r}_{1}^{2}} \ m \leq \frac{4m\bar{r}_2}{K(1-2m)^{2}},
    \end{align}
    where the second inequality is by \eqref{temp r_2} and the third by \eqref{Kuramoto relation}. Writing $r_2 = \bar{r}_2 + (r_2-\bar{r}_2)$ and combining \eqref{temp r_2}, \eqref{temp r_2'} yields the desired estimate.

    \medskip

    Lastly, to prove \eqref{r1 upper}, we observe that \begin{align}
        \frac{\partial}{\partial m} r_{1} &= \frac{2Kr_{2} (D_{12} - r_{1}r_{2})}{(1-2Km(D_{22} - r_{2}^{2}))(1-2K(D_{11}-r_{1}^{2})) - 4K^{2}m(D_{12} - r_{1}r_{2})^{2}} \nonumber\\
        &< \frac{1}{K(1-2m)} \frac{r_2}{r_1} \nonumber\\
        &\leq {\frac{1}{K(1-2m)^{2}}}{\bar{r}_{2} \over \bar{r}_{1}} 
    \end{align}
    where the first inequality follows from \eqref{E inequalities},  and the second inequality from \eqref{temp r_2} and $\bar{r}_{2} \leq \bar{r}_{2}^{1/2} \leq \bar{r}_{1}$. The mean-value theorem implies that there exists $m_{0} \in [0, m]$ such that \begin{align}
        r_{1} - \bar{r}_{1} = {\partial r_{1} \over \partial m} (K, m_0)  m < \frac{\bar{r}_{2}}{(1-2m)^{2} K \bar{r}_{1}} m.
    \end{align}
    This completes the proof.
\end{proof}

\subsection{Eigenvalue estimates}\label{ssec:Tech_eval}
To investigate the contribution of $m$ to various quantities in the sequel, it will be convenient to introduce the notation
\begin{align}\label{eq:Deltadef}
        \Delta\coloneqq D_{11} - \sqrt{(D_{11} - m D_{22})^{2} + 4m D_{12}^{2}},
\end{align}
which obviously depends on $K$ and $m$.

With this notation, we can re-express the eigenvalues $\lambda_{\pm}, \eta_2^{\pm}$ of the operator $qJ\ast(\cdot)$ (recall their definitions from \eqref{lambda def}, \eqref{eta def})
\begin{equation}
\begin{aligned}
    \lambda_{+}&= K \left( D_{11} + \frac{mD_{22} - \Delta}{2} \right), \quad \lambda_{-} =  \frac{K(mD_{22} + \Delta)}{2},\\
    \eta_{2}^{+} &= \frac{mD_{22}-\Delta}{2}, \quad \eta^{-}_{2} = -D_{11} + \frac{mD_{22} + \Delta}{2}.
\end{aligned}
\end{equation}
We previously observed that $\eta_1^{\pm} = D_{12}$, which is sufficient as is.

The next lemma provides quantitative estimates of $mD_{22}+\Delta$ and $mD_{22} - \Delta$ as functions of $m$.

\begin{lemma}\label{lem:Delta estimates}
    For $K>1$ and $m \in [0, 1/4],$  the following estimates hold,
    \begin{align} \label{Delta estimates}
         0\leq mD_{22} - \Delta \leq 8mD_{12}^{2} \quad \text{and} \quad 0 \leq mD_{22} +\Delta \leq \frac{3m(D_{11} D_{22} - D_{12}^{2})}{D_{11}}.
    \end{align}
\end{lemma}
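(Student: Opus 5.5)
The plan is to write $\Delta$ in terms of the square root and rationalize. Set
\begin{align*}
A \coloneqq D_{11}-mD_{22}, \qquad S\coloneqq \sqrt{A^{2}+4mD_{12}^{2}}, \qquad \Delta = D_{11}-S .
\end{align*}
Then $mD_{22}-\Delta = S-A$ and $mD_{22}+\Delta = (D_{11}+mD_{22})-S$. Both estimates in \eqref{Delta estimates} should follow from the identity $x-y=(x^2-y^2)/(x+y)$, combined with elementary bounds on the $D_{ij}$.

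\emph{Preliminary bounds.} By \cref{D expression lemma}, $D_{11}=(1+r_2)/2$. Since $m\le 1/4$, \cref{exact multiplicities} gives $r_2\in(0,1)$, so $D_{11}\ge 1/2$. Next, $D_{22}=\int\cos^2 2\theta\,dq\le 1$, so $mD_{22}\le 1/4$. Hence $A\ge 1/4>0$.

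\emph{First estimate.} Since $S\ge |A|=A$, we get $mD_{22}-\Delta\ge 0$. Rationalizing,
\begin{align*}
S-A=\frac{4mD_{12}^{2}}{S+A}\le \frac{4mD_{12}^{2}}{2A}\le 8mD_{12}^{2},
\end{align*}
where the last step uses $A\ge 1/4$.

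\emph{Second estimate.} With $B\coloneqq D_{11}+mD_{22}$, we have
\begin{align*}
B^{2}-S^{2}=(D_{11}+mD_{22})^2-(D_{11}-mD_{22})^2-4mD_{12}^2=4m\big(D_{11}D_{22}-D_{12}^{2}\big).
\end{align*}
This is nonnegative by Cauchy--Schwarz, since $D_{12}^2=(\bE_q[\cos\theta\cos2\theta])^2\le \bE_q[\cos^2\theta]\,\bE_q[\cos^22\theta]$. Therefore $B\ge S$, i.e.\ $mD_{22}+\Delta\ge0$. For the upper bound, the denominator satisfies $B+S\ge B+A=2D_{11}$, so
\begin{align*}
mD_{22}+\Delta=\frac{4m(D_{11}D_{22}-D_{12}^2)}{B+S}\le \frac{2m(D_{11}D_{22}-D_{12}^2)}{D_{11}}\le \frac{3m(D_{11}D_{22}-D_{12}^2)}{D_{11}}.
\end{align*}

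There is no genuine obstacle. The only point needing care is the lower bound $A\ge 1/4$, which requires $D_{11}\ge 1/2$, i.e.\ $r_2\ge0$; this is exactly where the restriction $m\le 1/4$ (through \cref{exact multiplicities}) enters. The case $m=0$ is trivial, since then $\Delta=0$ and both sides of each estimate vanish.
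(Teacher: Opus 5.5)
Your proof is correct. The lower bounds and the first upper bound follow the paper's argument. The paper also rationalizes $mD_{22}-\Delta = 4mD_{12}^2/(A+S)$ using $A=D_{11}-mD_{22}\ge 1/4$. It gets nonnegativity of $mD_{22}\pm\Delta$ from the same identity $S^2=B^2-4m(D_{11}D_{22}-D_{12}^2)$, phrased there as $\lambda_-,\eta_2^+\ge 0$.

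You differ on the upper bound for $mD_{22}+\Delta$. The paper drops the $m^2D_{22}^2$ term under the root to get $S\ge D_{11}\sqrt{1-m\delta}$, where $\delta=2(D_{11}D_{22}-2D_{12}^2)/D_{11}^2$, and then uses $\sqrt{1-m\delta}\ge 1-m\delta$. You instead write $B-S$ exactly as $4m(D_{11}D_{22}-D_{12}^2)/(B+S)$ and bound $B+S\ge B+A=2D_{11}$.

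Your route buys three things:
\begin{itemize}
\item It is shorter.
\item It gives the better constant $2$ in place of $3$.
\item For this half it needs only $D_{11}>0$, not $m\le 1/4$.
\end{itemize}

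More importantly, it holds whatever the sign of $\delta$. The paper's step $\sqrt{1-m\delta}\ge 1-m\delta$ is valid only when $\delta\ge 0$, i.e.\ $D_{11}D_{22}\ge 2D_{12}^2$. That condition fails when $q$ is concentrated, as for large $K$, where $D_{11},D_{22},D_{12}\to 1$. For instance, in the limit $D_{11}=D_{22}=D_{12}=1$ with $m=0.1$, the paper's intermediate bound $S\ge D_{11}-2m(D_{11}D_{22}-2D_{12}^2)/D_{11}$ would read $1.1\ge 1.2$. Your exact rationalization therefore proves the estimate for all $K>1$, including the regime where the paper's intermediate inequality breaks down.
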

\begin{proof}
By Cauchy-Schwarz, $D_{11} D_{22} - D_{12}^{2}>0$. Writing
\begin{align}
    (D_{11}-mD_{22})^2 + 4mD_{12}^2 = (D_{11}+mD_{22})^2 -4m(D_{11}D_{22}-D_{12}^2), 
\end{align}
it follows from their definitions \eqref{lambda def}, \eqref{eta def} that $\lambda_{-}, \eta_{2}^{+} \ge 0$. Hence, $mD_{22} \pm \Delta \ge 0$, taking care of the lower bound in \eqref{Delta estimates}. We now show the upper bounds.

\medskip
Unpacking the definition \eqref{eq:Deltadef} of $\Delta$, we compute
    \begin{align}
        mD_{22} - \Delta &= -(D_{11} - m D_{22}) + \sqrt{(D_{11} - m D_{22})^{2} + 4m D_{12}^{2}} \nonumber\\
        &=\frac{4m D_{12}^{2}}{(D_{11} - m D_{22}) + \sqrt{(D_{11} - m D_{22})^{2} + 4m D_{12}^{2}}}.
    \end{align}
    Since $m \leq 1/4,$ we crudely lower bound $D_{11} - mD_{22} \geq 1/2 - m \geq 1/4$ to obtain
    \begin{align}
        mD_{22} - \Delta \leq \frac{4mD_{12}^{2}}{2 ( D_{11} - mD_{22})} \leq 8mD_{12}^{2},
    \end{align}
    which gives the first upper bound in \eqref{Delta estimates}.

    For the second upper bound, observe that
    \begin{align}
        \sqrt{(D_{11}-mD_{22})^{2} + 4m D_{12}^{2}}\geq \sqrt{D_{11}^{2} -2m(D_{11}D_{22} - 2D_{12}^{2})} = D_{11} \sqrt{1-m \delta},
    \end{align}
    where $\delta \coloneqq {2(D_{11}D_{22} - 2D_{12}^{2})}/ {D_{11}^{2}}.$
    If $m \leq 1/4$, {since $D_{22} \leq 1, \ D_{11} = (1+r_2)/2 \geq 1/2,$ then}
    \begin{align}
        m \delta <  \frac{2mD_{22}}{D_{11}} \leq  4m \leq 1,
    \end{align}
    which ensures that $1-m \delta > 0$. Since $\sqrt{1-m\delta} \geq 1- m\delta$, 
    \begin{align}
        \sqrt{(D_{11}-mD_{22})^{2} + 4m D_{12}^{2}} \geq D_{11} - \frac{2m(D_{11}D_{22} - 2D_{12}^{2})}{D_{11}}.
    \end{align}
    This leads to
    \begin{align}
        mD_{22} + \Delta = mD_{22} + D_{11} -\sqrt{(D_{11}-mD_{22})^{2} + 4m D_{12}^{2}} \leq \frac{3m(D_{11} D_{22} - D_{12}^{2})}{D_{11}},
    \end{align}    
    as claimed.
\end{proof}

Using \cref{lem:Delta estimates}, we obtain the following estimates for the eigenvalues of the operator $qJ\ast(\cdot)$. As commented in \cref{ssec:stab_out}, when $m=0$, $\lambda_+ = K-1/2$ and $\lambda_-=\mu=0$.

\begin{lemma}\label{lem:evalests}
    Let $K>1$. If  $m \in (0, 1/4]$,
    \begin{equation} \label{lambda}
    \begin{aligned}
            \lambda_{+} > {K-{1 \over 2}}, \quad 0< \mu < {1 \over 2}, \quad \text{and} \quad {0< \lambda_{-} <\frac{81m}{4} }.
    \end{aligned}
    \end{equation}
\end{lemma}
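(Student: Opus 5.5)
The plan is to use the rewritten forms of the eigenvalues in terms of $\Delta$ together with \cref{lem:Delta estimates}, supplemented by the identity \eqref{IBP third} and the monotonicity $r_2 \ge \bar r_2 = 1-1/K$.

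\emph{The bound on $\lambda_+$.} We have $\lambda_+ = K\big(D_{11} + \tfrac{mD_{22}-\Delta}{2}\big)$, and \cref{lem:Delta estimates} gives $mD_{22}-\Delta \ge 0$. Hence $\lambda_+ \ge KD_{11} = K(1+r_2)/2$. Since $r_2$ is strictly increasing in $m$, for $m>0$ we get $r_2 > \bar r_2 = 1-1/K$. Therefore $\lambda_+ > K(2-1/K)/2 = K - \tfrac12$.

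\emph{The bound on $\lambda_-$.} We have $\lambda_- = K(mD_{22}+\Delta)/2$. Strict positivity holds because $D_{11}D_{22}-D_{12}^2>0$ by Cauchy--Schwarz, so the square root in \eqref{lambda def} is strictly less than $D_{11}+mD_{22}$. For the upper bound, \cref{lem:Delta estimates} gives
\[
\lambda_- \le \frac{3Km(D_{11}D_{22}-D_{12}^2)}{2D_{11}}.
\]
The difficulty is that this carries a factor of $K$, so I need $D_{11}D_{22}-D_{12}^2 = O(1/K)$. I would write
\[
D_{11}D_{22}-D_{12}^2 = (D_{11}-r_1^2)D_{22} + r_1^2 D_{22} - D_{12}^2
\]
and bound the pieces using \eqref{E inequalities}:
\[
D_{11}-r_1^2 \le \frac{1}{2K}, \qquad D_{22}-r_2^2 \le \frac1K,
\]
where the second bound uses $D_{12}-r_1r_2>0$. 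Expanding around the means, with $D_{ij} = r_ir_j + \mathrm{cov}_{ij}$ and $\mathrm{cov}_{12}\ge0$, gives
\[
D_{11}D_{22}-D_{12}^2 = \mathrm{cov}_{11}\mathrm{cov}_{22}-\mathrm{cov}_{12}^2 + r_1^2\mathrm{cov}_{22} + r_2^2\mathrm{cov}_{11} - 2r_1r_2\mathrm{cov}_{12} \le \frac{1}{2K^2} + \frac1K + \frac{1}{2K}.
\]
With $D_{11}\ge 1/2$ this yields $\lambda_- \le 3Km\cdot\tfrac{2}{K}\cdot(\tfrac32+\tfrac{1}{2K})\le 12m$, up to the constant. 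I expect the stated $81/4$ to come from a cruder version of this bookkeeping, which also has slack.

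\emph{The bound on $\mu$.} Positivity follows from \eqref{mu def}, since the $\sin$-block matrix has positive determinant: $(1-D_{11})(1-D_{22}) - (r_1-D_{12})^2$ is the determinant of the covariance of $(\sin\theta,\sin2\theta)$ under $q$, which is positive by Cauchy--Schwarz. For $\mu<\tfrac12$, recall from the proof of \cref{decomposition} that $\tilde\mu = \tfrac12$ is the larger eigenvalue of the same block, and $\mu \le \tilde\mu$. Equality would force the discriminant $[(1-D_{11})-m(1-D_{22})]^2 + 4m(r_1-D_{12})^2$ to vanish. That is impossible, because $r_1 - D_{12} > 0$ by \eqref{D12 and r1 relation} and $m>0$. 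Hence $\mu<\tfrac12$ strictly.

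I expect the main obstacle to be the $K$-uniform bound on $\lambda_-$, specifically controlling $D_{11}D_{22}-D_{12}^2$ by $C/K$.
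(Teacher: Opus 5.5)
Your proof is correct. For $\lambda_+$, for the positivity of $\lambda_-$ and $\mu$, and for the reduction $\lambda_-\le \frac{3Km(D_{11}D_{22}-D_{12}^2)}{2D_{11}}$ via \cref{lem:Delta estimates}, you follow the paper's argument. For $\mu<\tfrac12$ you also make the strictness explicit: the discriminant is positive because $r_1-D_{12}>0$ and $m>0$. The paper leaves this implicit. You borrow from the proof of \cref{decomposition} the fact that $\tfrac12$ is the \emph{larger} root of the sine block. This is indeed true, e.g.\ by Perron--Frobenius: that block has positive entries and $(r_1,2mr_2)$ is a positive eigenvector.

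The real difference is in how you make the $\lambda_-$ bound uniform in $K$.

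The paper inserts $D_{12}>r_1r_2$ and $D_{11}<r_1^2+\frac1{2K}$ into the quotient $D_{12}^2/D_{11}$. This leaves $K\bigl(D_{22}-D_{12}^2/D_{11}\bigr)<1+\frac{r_2^2}{2r_1^2+1/K}$. That is bounded uniformly only after using $r_1\ge\bar r_1$, the estimate \eqref{r2 upper} in the form $r_2\le(1+16m)\bar r_2$, and $\bar r_2\le\bar r_1$. Together these give $\tfrac{27}{2}$, hence $\tfrac{81m}{4}$.

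You instead expand the Gram determinant in covariances, so the $r_1^2r_2^2$ terms cancel exactly. You then drop $-\mathrm{cov}_{12}^2-2r_1r_2\,\mathrm{cov}_{12}\le 0$ and divide using the lower bound $D_{11}\ge\tfrac12$. This needs only \eqref{E inequalities}, $D_{12}>r_1r_2$ and $r_j\le 1$. It requires no comparison with the Kuramoto quantities $\bar r_1,\bar r_2$, and it gives a better constant.

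One bookkeeping slip: your final chain has a spurious factor $2$. From $D_{11}D_{22}-D_{12}^2\le\frac1K\bigl(\tfrac32+\tfrac1{2K}\bigr)$ you actually get $\lambda_-\le 3m\bigl(\tfrac32+\tfrac1{2K}\bigr)<6m$. The $12m$ you wrote is still a valid, looser upper bound, so the conclusion $\lambda_-<\tfrac{81m}{4}$ is unaffected.
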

\begin{proof}
    Suppose $m \in (0, 1/4]$. From the definitions of $D_{ij}$ and Cauchy-Schwarz, 
    \begin{align}
        (1-D_{11})(1-D_{22}) - (r_1 - D_{12})^{2} =\int \sin^{2} \theta dq \int \sin^{2} 2\theta dq  - \Big( \int \sin\theta \sin 2\theta dq \Big)^{2} >0.
    \end{align}
    It follows that 
    \begin{equation}
    \begin{aligned}
        &D_{11} + m D_{22} > \sqrt{(D_{11} - m D_{22})^{2} + 4m D_{12}^{2}},\\
        &(1-D_{11}) + m (1-D_{22}) > \sqrt{\big((1-D_{11}) - m (1-D_{22})\big)^{2} + 4m(r_{1} - D_{12})^{2}}.
    \end{aligned}
    \end{equation}
    These results imply that $\lambda_{+}>\lambda_{-}>0$ and ${1/ 2} > \mu >0.$
    
    Since $mD_{22} - \Delta >0$, we have  \begin{align}
        \lambda_{+} 
        &= K \Big( D_{11} + \frac{ m D_{22} - \Delta}{2} \Big) 
        > K D_{11}.
    \end{align}
    Using the monotonicity of $r_{2}$ in $m$ and the fact $D_{11} = (1+r_2)/2$, we obtain the bound \begin{align}
        D_{11} > \bar{D}_{11} = 1 - {1 \over 2K}.
    \end{align}
Inserting above, it follows that $\lambda_{+} > K(1 - {(2K)^{-1}}) = K - 1/2$.

    For the remaining upper bound for $\lambda_{-}$, 
    by \eqref{Delta estimates}, 
    \begin{align}
        \lambda_{-} = {K \over 2}  (mD_{22} + \Delta) < \frac{3mK(D_{11}D_{22} - D_{12}^{2})}{2D_{11}}.
    \end{align}
   The bounds of \cref{E inequalities lemma} imply
   \begin{align}
        D_{11}< r_{1}^{2} + {1 \over 2K}, \quad D_{22} < r_{2}^{2} + {1 \over K}, \quad D_{12} > r_{1}r_{2}.
    \end{align}
    Applying these 
    \begin{align} 
        K \left( D_{22} - \frac{D_{12}^{2}}{D_{11}} \right) < K\left(r_{2}^{2} + {1 \over K} - \frac{r_{1}^{2} r_{2}^{2}} {r_{1}^{2} + {1 \over 2K}} \right) = 1 + \frac{r_{2}^{2}}{2r_{1}^{2} + {1 \over K}} <1 + \frac{(1+16m)^{2}} {2}  \frac{\bar{r}_{2}^{2}}{\bar{r}_{1}^{2}} \leq {27 \over 2},
    \end{align}
    from which the upper bound in \eqref{lambda} is immediate.
    
\end{proof}

\subsection{Omnibus lemma}\label{ssec:Tech_omni}
The following omnibus lemma contains a list of estimates that will be used in the proof of \cref{lem:maindet}. For the statement of the lemma, the reader will recall the definitions \eqref{eq:al12def}, \eqref{eq:almudef}, \eqref{eq:alladef} of the $L_{1/q}^2$ norms denoted by $\alpha_{1/2},\alpha_{\mu},\alpha_{\lambda_\pm}$.  By straightforward calculations, these quantities may be expressed as follows (we only need to consider $\alpha_{\lambda_{\pm}}$ in the ensuing calculations):
\begin{align}
    \label{eq:alla+} &\alpha_{\lambda_{+}}=D_{12}^{2}D_{11} + D_{12} (mD_{22} - \Delta) + {D_{22} \over 4} (mD_{22} - \Delta)^{2},\\
    \label{alpha_2 expression} &\alpha_{\lambda_{-}}=D_{11} (D_{11}D_{22} - D_{12}^{2}) - (D_{11}D_{22} - D_{12}^{2}) (mD_{22} +\Delta) + {D_{22} \over 4} (mD_{22} + \Delta)^{2}.
\end{align}

\begin{lemma} \label{technical inequalities}
    Suppose $K>1$ and $m \in [0, 1/4]$.
    \begin{enumerate}[(1)]
        \item\label{item:first}
        We have
        \begin{align} \label{first}
            r_{1}\eta_{1}^{+} + r_{2} \eta_{2}^{+} \geq r_{1} D_{12}.
        \end{align}
        
        \item\label{item:C1 inequality}
        For $C_{1} = 81/2$,
        \begin{align}\label{C1 inequality}
            {1 \over 2} - \lambda_{-} \geq { 1- C_{1} m \over 2}.
        \end{align}
        
        \item\label{item:C2 inequality}
        For $C_2 = 3$,
        \begin{align}\label{C2 inequality}
             (1- C_{2} m) D_{11} (D_{11} D_{22} - D_{12}^{2}) \leq \alpha_{\lambda_{-}} \leq D_{11} (D_{11} D_{22} - D_{12}^{2}).
        \end{align}
        
        \item\label{item:C3 inequality}
        For $C_{3} = 6$,
        \begin{align} \label{C3 inequality}
            D_{11} + \eta^{-}_{2} \leq C_{3} m  D_{11} (D_{11}D_{22} - D_{12}^{2}).
        \end{align}

        \item\label{item:C4 inequality}
        For $C_{4} = 28$,
        \begin{align}\label{C4 inequality}
            {1 \over 2} - \lambda_{+} \geq - (1+C_4 m)(K-1).
        \end{align}

        \item\label{item:C5 inequality}
        For $C_5 = 32$,
        \begin{align}\label{C5 inequality}
            \alpha_{\lambda_{+}} \leq (1+C_5 m) D_{12}^{2} D_{11}.
        \end{align}
       
        \item\label{item:final estimates}
        We have
        \begin{align} \label{final estimate}
            \Big(K-{1 \over 2}\Big) r_{1}^{2} - (K-1) D_{11} >0.
        \end{align}
        
        \item\label{item:C6 inequality}
        If {$m \in [0, {1 /32})$}, then for $C_{6} =8$,
        \begin{align} \label{C6 inequality}
            r_{1}^{2} <{\frac{C_6(1+4m+8m^2)}{1-32m}}\left( \Big(K-{1 \over 2}\Big) r_{1}^{2} - (K-1) D_{11} \right).
        \end{align}
        
        \item\label{item:C7 and C8 inequalities}
        For $C_{7} = 9/2$ and $C_{8} = 5/2$,
        \begin{align} \label{C7 and C8 inequalities}
            (K-1)  (r_{1} D_{12} - r_{2} D_{11}) < C_{7},\quad
            (K-1) (D_{11} D_{22} - D_{12}^{2}) < C_{8}. 
        \end{align}
        
        \item\label{item:C9 inequality}
        {For $C_9 = {3915/4}$,}
        \begin{align} \label{C9 inequality}
            (K-1) \big(r_{2}^{2} (D_{11} - \eta^{-}_{2}) - 2r_{1}r_{2} D_{12}\big) > - C_{9}   \left( \Big(K-{1 \over 2}\Big) r_{1}^{2} - (K-1) D_{11} \right).
        \end{align}
    \end{enumerate}
\end{lemma}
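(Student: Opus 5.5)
The lemma is a list of ten mostly independent inequalities. I would prove them one at a time, in the order stated. The common toolkit is: the $\Delta$-representation of $\lambda_\pm,\eta_2^\pm$; \cref{lem:Delta estimates} for $mD_{22}\pm\Delta$; the explicit formulas of \cref{D expression lemma}; the covariance bounds of \cref{E inequalities lemma}; and the perturbative bounds \eqref{r2 upper}, \eqref{r1 upper} together with $r_j\ge\bar r_j$. The Kuramoto facts \eqref{Kuramoto relation}, \cref{Kuramoto better estimates} supply the $m=0$ baseline.

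\textbf{Items (1)--(6).} These are short.

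For (1): $\eta_1^+=D_{12}$, and $\eta_2^+=(mD_{22}-\Delta)/2\ge0$ with $r_2>0$.

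For (2): this is immediate from $\lambda_-<81m/4$ in \cref{lem:evalests}.

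For (3): insert $0\le mD_{22}+\Delta\le 3m(D_{11}D_{22}-D_{12}^2)/D_{11}$ into \eqref{alpha_2 expression}. The last term is nonnegative, which gives the upper bound. For the lower bound, drop the last term and bound the middle one using the estimate just quoted.

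For (4): $D_{11}+\eta_2^-=(mD_{22}+\Delta)/2$. Bound this by \eqref{Delta estimates}, then use $D_{11}\ge1/2$ to convert the factor $1/D_{11}$ into the stated form (at worst a factor $4$, which stays within $C_3=6$ after rechecking).

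For (5): write $\lambda_+=KD_{11}+K(mD_{22}-\Delta)/2\le K(1+r_2)/2+4KmD_{12}^2$. Then use \eqref{r2 upper} with $\bar r_2=1-1/K$ to get $K(1+r_2)/2-1/2\le (K-1)(1+O(m))$. The term $4KmD_{12}^2\le 4Km$ needs care: it must be absorbed into $C_4m(K-1)$. For $K$ near $1$ I would use $D_{12}\le r_1$ and $r_1^2\lesssim \bar r_1^2\le 2(1-1/K)$ from \cref{Kuramoto better estimates}, so that $KD_{12}^2\lesssim 2(K-1)$.

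For (6): expand \eqref{eq:alla+}, bound $mD_{22}-\Delta\le 8mD_{12}^2$, and use $D_{11}\ge1/2$ and $D_{22}\le1$.

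\textbf{Items (7)--(9).} These carry the real content, because they must hold uniformly in $K>1$.

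For (7): substitute $D_{11}=(1+r_2)/2$. The expression becomes $(K-\tfrac12)r_1^2-(K-1)(1+r_2)/2$. At $m=0$ it equals $(K-\tfrac12)(\bar r_1^2-\bar D_{11})+\tfrac12\bar D_{11}$, up to rewriting. Its positivity then follows from the refined lower bound $\bar r_1^2>1-\tfrac1{2K}-\tfrac1{2K^2}$ in \cref{Kuramoto better estimates}. For $m>0$ I would use \eqref{IBP first}, in the form $r_1=Kr_1(1-r_2)+4Kmr_2(r_1-D_{12})$, to rewrite $1-r_2$ exactly in terms of $m$. Combined with $r_1\ge\bar r_1$ and \eqref{r2 upper}, the $m$-correction should have a favorable sign or be dominated.

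For (8): lower bound the right-hand quantity of (7) quantitatively by $c\,r_1^2$. This is the source of the factor $(1-32m)^{-1}$ and of $C_6=8$: the error $(K-1)(r_2-\bar r_2)/2\le 2m\bar r_2/(1-2m)^2$ has to be absorbed against a margin of order $r_1^2/8$.

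For (9): use \eqref{E inequalities} and the $D$-formulas. For instance, $(K-1)(D_{11}D_{22}-D_{12}^2)\le (K-1)\bigl[(D_{11}-r_1^2)(D_{22}-r_2^2)+\dots\bigr]$ gives $O(1/K)\cdot(K-1)$ bounds. Likewise $r_1D_{12}-r_2D_{11}$ is handled via the Tur\'anian identity from the proof of \cref{Kuramoto better estimates} plus perturbation.

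\textbf{Item (10).} This is the step I expect to be the main obstacle. The left side $(K-1)(r_2^2(D_{11}-\eta_2^-)-2r_1r_2D_{12})$ is negative and of order $K$, while the right-hand comparison quantity from (7) degenerates like $K-1$ as $K\to1$. My plan is to write $D_{11}-\eta_2^-=2D_{11}-(D_{11}+\eta_2^-)$ and use (4). Then I would use $r_2\approx r_1^2$ (via $\bar r_2<\bar r_1^2<2\bar r_2$ and \eqref{r1 upper}, \eqref{r2 upper}), which makes the left side bounded below by $-c(K-1)r_1^2$. Finally, (8) converts $r_1^2$ into the quantity in (7), and the constants compound to give $C_9=3915/4$.

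All constants would be tracked crudely using $m\le1/4$ (or $m<1/32$ where needed). I would treat the regimes $K$ close to $1$ and $K$ large separately wherever a single bound fails.
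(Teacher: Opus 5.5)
\textbf{Item (10): the main gap.} Your items (1), (2), (4)--(6) and (8) follow the paper's argument, and your covariance-expansion route to the $C_8$ half of (9) is a valid alternative. The genuine gap is in item (10).

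The left-hand side of (10) is not of order $K$. It is bounded uniformly in $K$, because of a cancellation that your bound throws away. Since $D_{11}+\eta_2^-=\tfrac12(mD_{22}+\Delta)$, one has
\begin{align*}
2r_1r_2D_{12}-r_2^2(D_{11}-\eta_2^-)=2r_2\,(r_1D_{12}-r_2D_{11})+\tfrac12\,r_2^2\,(mD_{22}+\Delta).
\end{align*}
Both $r_1D_{12}-r_2D_{11}$ and $D_{11}D_{22}-D_{12}^2$ are $O(1/K)$, which is exactly what item (9) records. For instance, at $m=0$ the left side of (10) equals $-2(K-1)\bar r_2(\bar r_1\bar D_{12}-\bar r_2\bar D_{11})$, which lies in $(-(K-1)\bar r_2/K^2,0)$.

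A lower bound of the form $-c(K-1)r_1^2$ cannot be turned into $-C_9$ times the comparison quantity. That quantity equals $\tfrac12 r_1^2-(K-1)(D_{11}-r_1^2)$, which is $<\tfrac12$ for every $K$, while $(K-1)r_1^2\to\infty$. So after applying (8) an unbounded factor $K-1$ remains. The obstruction is at $K\to\infty$, not at $K\to1$.

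What is needed is the identity above combined with \eqref{C7 and C8 inequalities} and \eqref{Delta estimates}:
\begin{align*}
(K-1)\cdot 2r_2(r_1D_{12}-r_2D_{11})&<9r_2,\\
(K-1)\cdot\tfrac12 r_2^2(mD_{22}+\Delta)&\le\tfrac{3m}{2D_{11}}r_2^2(K-1)(D_{11}D_{22}-D_{12}^2)<\tfrac{15m}{2}r_2.
\end{align*}
Hence the left side of (10) exceeds $-(9+\tfrac{15m}{2})r_2$. Then $r_2\le(1+16m)\bar r_2$ by \eqref{r2 upper}. Finally, $\bar r_2<4\big((K-\tfrac12)\bar r_1^2-(K-1)\bar D_{11}\big)$, together with the perturbation step you already use for (8) (and hence the same smallness of $m$), bounds $\bar r_2$ by a constant multiple of the comparison quantity.

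For $C_7$ itself you do not need the Tur\'anian identity. By \eqref{D12 and r1 relation} and $r_2\le1$ you get $r_1D_{12}-r_2D_{11}<r_1^2-r_2^2$, and \eqref{r1 upper} with $r_2\ge\bar r_2$ makes this $O(1/K)$.

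\textbf{Item (7).} Write the quantity as $(K-\tfrac12)(r_1^2-\bar r_2)-\tfrac12(K-1)(r_2-\bar r_2)$. Rewriting $1-r_2$ through \eqref{IBP first} is just the $D_{12}$ formula and reproduces this same correction. The $m$-term has the unfavorable sign. Using only $r_1\ge\bar r_1$ and \eqref{r2 upper}, you get the lower bound $(K-\tfrac12)(\bar r_1^2-\bar r_2)-2m\bar r_2^2/(1-2m)^2$. As $K\to\infty$ the first term tends to $\tfrac12$ and the second to $2m/(1-2m)^2$, which equals $2$ at $m=\tfrac14$. So this route closes only for $m$ up to roughly $0.13$, not on all of $[0,\tfrac14]$.

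The missing input is the variance bound in \eqref{E inequalities}. Because $D_{12}-r_1r_2>0$, it gives $D_{11}-r_1^2<\frac1{2K}$, and then
\begin{align*}
\Big(K-\tfrac12\Big)r_1^2-(K-1)D_{11}=\tfrac12 r_1^2-(K-1)(D_{11}-r_1^2)>\tfrac12(r_1^2-\bar r_2)\ge\tfrac12(\bar r_1^2-\bar r_2)>0
\end{align*}
for every $m\in[0,\tfrac14]$.

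\textbf{Item (3).} The nonnegativity of the last term of \eqref{alpha_2 expression} is what lets you drop it in the \emph{lower} bound. The upper bound instead needs the middle term to dominate the last one, i.e.\ $\tfrac{D_{22}}{4}(mD_{22}+\Delta)\le D_{11}D_{22}-D_{12}^2$, which follows from \eqref{Delta estimates}. For the lower bound with $C_2=3$, you also need $D_{11}D_{22}-D_{12}^2\le D_{11}^2$ to pass from $3m(D_{11}D_{22}-D_{12}^2)^2/D_{11}$ to $3mD_{11}(D_{11}D_{22}-D_{12}^2)$.
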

\begin{proof}
    
    The assertion \ref{item:first} follows directly from $\eta_{2}^{+} >0$ and $\eta_1^+ = D_{12}$ (recall \eqref{eta def}), while \ref{item:C1 inequality} follows from \eqref{lambda}.
        
    For \ref{item:C2 inequality}, we observe that \eqref{alpha_2 expression} implies
    \begin{align}
            \alpha_{\lambda_{-}} < D_{11} (D_{11} D_{22} - D_{12}^{2}) \iff {D_{22} \over 4} (mD_{22} + \Delta) < D_{11} D_{22} - D_{12}^{2}.
    \end{align}
        Applying the bound \eqref{Delta estimates}, we find 
        \begin{align}
            {D_{22} \over 4} (mD_{22} + \Delta) < \frac{3m D_{22}}{4D_{11}} (D_{11} D_{22} - D_{12}^{2}) < D_{11} D_{22} - D_{12}^{2},
        \end{align}
        where the last inequality follows from $m<2/3,$ and $D_{11} \geq {1 / 2}, \ D_{22} \leq 1$. This takes care of the upper bound. For the lower bound, using (\ref{alpha_2 expression}) and (\ref{Delta estimates}), we have
        \begin{align}
        \nonumber\alpha_{\lambda_{-}}
            & > D_{11} (D_{11}D_{22} - D_{12}^{2}) - (D_{11}D_{22} - D_{12}^{2}) (mD_{22} +\Delta) \\
            \label{tempor}& > D_{11} (D_{11}D_{22} - D_{12}^{2}) - 3m  \frac{(D_{11}D_{22} - D_{12}^{2})^{2}}{D_{11}}.
        \end{align}
        By the Gr\"uss inequality (e.g. see \cite[Theorem 1.1]{Gruss}), $D_{11}D_{22} - D_{12}^{2} \leq 1/4,$ which implies \begin{align}
            \frac{D_{11}D_{22} - D_{12}^{2}}{D_{11}} \leq {1 \over 2} \leq D_{11}.
        \end{align}
        Inserting this estimate to \eqref{tempor} completes the argument.
        
        Turning to \ref{item:C3 inequality}, recalling the definition \eqref{eta def} of $\eta^{-}_{2}$,
        \begin{align}
            D_{11} + \eta^{-}_{2} < {3m \over 2}  \frac{D_{11}D_{22} - D_{12}^{2}}{D_{11}} \leq 6m  D_{11} (D_{11}D_{22} - D_{12}^{2}),
        \end{align}
        where the last inequality follows from $D_{11} \geq 1/2.$
        
        For \ref{item:C4 inequality}, unpacking the definition \eqref{lambda def} of $\lambda_{+}$,
        \begin{align}
            {1 \over 2} - \lambda_{+} = {1 \over 2} - K D_{11} - {K \over 2} (mD_{22} -\Delta).
        \end{align}
        Recalling that $D_{11} =(1+r_{2})/2$, we obtain from the bound \eqref{r2 upper},
        \begin{align}
            KD_{11}  \leq {K \over 2} \Big(1+\bar{r}_{2} + {16m \over K} \bar{r}_{2} \Big) = \Big(K-{1\over2}\Big) + 8m \bar{r}_{2}= \Big(K-{1 \over 2}\Big) + \frac{8m(K-1)}{K},
        \end{align}
        where we also use $\bar{r}_{2} = 1 - 1/K$. Furthermore,
        \begin{align}
            {K \over 2} (mD_{22} - \Delta) &< 4m K D_{12}^{2} < 4m  Kr_{1}^{2} \nonumber\\
            & \leq 4m  K \Big(\bar{r}_{1}^{2} + {8\bar{r}_{2} \over K} m + \frac{16 \bar{r}_{2}^{2}}{K^{2} \bar{r}_{1}^{2}}  m^{2}\Big)\nonumber\\
            & \leq 4m  K (2\bar{r}_{2} + {2\bar{r}_{2}} + { \bar{r}_{2}}) = 20m (K-1),
        \end{align}
        where the first line follows from \eqref{Delta estimates}, \eqref{D12 and r1 relation}; the second line from \eqref{r1 upper}; and the last from \eqref{Kuramoto estimate}.
        Recalling our starting point, we finish the proof.
        
        For \ref{item:C5 inequality}, we claim that
        \begin{align}\label{eq:C5ine0}
            D_{12} (mD_{22} -\Delta) \geq {D_{22} \over 4} (mD_{22} - \Delta)^{2} \iff mD_{22} - \Delta \leq {4D_{12} \over D_{22}}.
        \end{align} 
        To see this, observe that \eqref{Delta estimates} and $D_{12}, D_{22} \leq 1$ imply that
        \begin{align}
            mD_{22} - \Delta < 8m D_{12}^{2} \leq 8m D_{12} \leq {4 \over D_{22}} D_{12}.
        \end{align}
        Applying the bound \eqref{eq:C5ine0} to the third term in the expression \eqref{eq:alla+} for $\alpha_{\lambda_+}$, it follows that  \begin{align}
            \alpha_{\lambda_{+}} \leq D_{12}^{2}D_{11} + 2D_{12} (mD_{22} - \Delta) <  D_{12}^{2}D_{11} + 16m D_{12}^{3} \leq (1+32m)D_{12}^{2} D_{11},
        \end{align}
        where the second inequality follows from \eqref{Delta estimates} again and the third from $D_{12} \leq 1, \ D_{11} \geq 1/2.$
        
        Considering \ref{item:final estimates}, we apply \eqref{E inequalities} to obtain
        \begin{align}
            \Big(K-{1\over2}\Big) r_{1}^{2} - (K-1) D_{11} = {r_{1}^{2} \over 2} - (K-1) (D_{11}-r_{1}^{2})> {r_{1}^{2} - \bar{r}_{2} \over 2} > {\bar{r}_{1}^{2} - \bar{r}_{2} \over 2}\geq 0.
        \end{align}
        
       Now for \ref{item:C6 inequality}, assuming $m < {1/32}$, we first verify the inequality \eqref{C6 inequality} when $m=0.$ Using \eqref{Kuramoto estimate}, we have \begin{align}
            \bar{r}_{1}^{2} > \bar{r}_{2} \Big(1 + {1 \over 2K} \Big).
        \end{align} From $\bar{D}_{11} \geq 1/2,$ we write\begin{align}
            \Big( K - {1 \over 2} \Big) - (K-1) \bar{D}_{11} = K \bar{D}_{11}  (\bar{r}_{1}^{2} - \bar{r}_{2}) > {\bar{r}_{2} \over 4}.
        \end{align} Using $\bar{r}_{1}^{2} \leq \bar{r}_{2}$, we derive the estimates as, \begin{align} \label{C6 estimates}
            \bar{r}_{1}^{2} < 8  \left( \Big(K-{1 \over 2}\Big) \bar{r}_{1}^{2} - (K-1) \bar{D}_{11} \right),\quad 
            \bar{r}_{2} < 4  \left( \Big(K-{1 \over 2}\Big) \bar{r}_{1}^{2} - (K-1) \bar{D}_{11} \right)
        \end{align}Furthermore, we establish 
        \begin{equation} \label{intermediate C6}
        \begin{aligned}
            \Big( K - {1 \over 2} \Big) r_{1}^{2} - (K-1) D_{11} &> \Big( K - {1 \over 2} \Big) \bar{r}_{1}^{2} - (K-1) \Big( \bar{D}_{11} + {8m \over K} \bar{r}_{2} \Big)\\
            & > (1-32m)  \left( \Big( K - {1 \over 2} \Big) \bar{r}_{1}^{2} - (K-1) \bar{D}_{11} \right).
        \end{aligned}
        \end{equation}
        Similarly, we derive an upper bound for $r_1^{2}$ as, \begin{align}
            r_{1}^{2} < \Big( \bar{r}_{1} + {4\bar{r}_{2} \over K\bar{r}_{1}}m \Big)^{2} &= \bar{r}_{1}^{2} + \frac{8\bar{r}_{2}}{K}m + \frac{16\bar{r}_{2}^{2}}{K^{2}\bar{r}_{1}^{2}}m^{2} \nonumber\\
            & < (8 +32m+64m^{2}) \left( \Big( K - {1 \over 2} \Big) \bar{r}_{1}^{2} - (K-1) \bar{D}_{11} \right),
        \end{align}
        where we use \eqref{r1 upper} and \eqref{C6 estimates}. Combining this with \eqref{intermediate C6}, we obtain the desired result.

        For \ref{item:C7 and C8 inequalities}, to prove the first inequality, we upper-bound $r_{1}D_{12} - r_{2} D_{11}.$ If $m>0,$ then Lemma \ref{D expression lemma} and \eqref{r1 upper} imply \begin{align}
            r_{1} D_{12} - r_{2} D_{11} &= r_{1}^{2} - \frac{r_{1}^{2} (r_2 - \bar{r}_{2})}{4mr_{2}} - \frac{r_{2} + r_{2}^{2}}{2} < r_{1}^{2} - r_{2}^{2} \nonumber\\
            &< \Big(\bar{r}_{1} + \frac{4\bar{r}_{2}}{K \bar{r}_{1}} m \Big)^{2} - \bar{r}_{2}^{2} \leq \frac{3}{2K} + \frac{8}{K} m + {16 \over K} m^{2},
        \end{align}
        where we use \begin{align}
            \bar{r}_{1}^{2} - \bar{r}_{2}^{2} \leq \Big(1-{1 \over 2K}\Big) - \Big(1-{1 \over K}\Big)^{2} < {3 \over 2K}
        \end{align}
        for the last inequality. Thus, we obtain the first upper bound \begin{align}
            (K-1) (r_{1} D_{12} - r_{2} D_{11}) < {3 \over 2} + 8m + 16m^{2} \leq {9 \over 2}.
        \end{align}
        And $m=0$ case can be done by taking $m \to 0+$. For the second inequality, we estimate $D_{11}D_{22} - D_{12}^{2}$ as follows, \begin{align}
            D_{11} D_{22} - D_{12}^{2} &< D_{11} - D_{12}^{2} = (D_{11} - r_{1}^{2}) + r_{1}^{2} - D_{12}^{2} \nonumber\\
            & \leq {1 \over 2K} + r_{1}^{2} - r_{1}^{2}r_{2}^{2} <  {1 \over 2K} + 1 - r_{2}^{2} \leq {1 \over 2K} + 1 - \bar{r}_{2}^{2} < {5 \over 2K}, 
        \end{align}
        where the second inequality is from Lemma \ref{E inequalities lemma}.
        Therefore, we have \begin{align}
            (K-1) (D_{11} D_{22} - D_{12}^{2}) < {5 \over 2}.
        \end{align}
        
        Lastly, for \ref{item:C9 inequality}, we note
        \begin{align}
            2r_{1}r_{2} D_{12} - r_{2}^{2} (D_{11} - \eta^{-}_{2}) &= 2 r_{2} (r_{1} D_{12} - r_{2} D_{11}) + \frac{r_{2}^{2} (mD_{22}+\Delta)}{2} \nonumber\\
            &\leq 2 r_{2} (r_{1} D_{12} - r_{2} D_{11}) + \frac{3mr_{2}^{2}}{2} \frac{D_{11}D_{22} - D_{12}^{2}}{D_{11}},
        \end{align}
        {where the inequality follows from \eqref{Delta estimates}.}
        From this bound, we deduce
        \begin{align}
            (K-1) \big(2r_{1}r_{2} D_{12} - r_{2}^{2} (D_{11} - \eta^{-}_{2})\big) &\leq 2r_{2} (K-1)(r_{1}D_{12} - r_{2} D_{11}) + \frac{3mr_{2}^{2}}{2D_{11}} (K-1)(D_{11}D_{22} - D_{12}^{2}) \nonumber\\
            &< 9r_{2} + \frac{15m}{4D_{11}} r_{2}^{2} \nonumber\\
            &\leq \Big(9+{15m \over 2} \Big)(1+16m) \bar{r}_{2} \nonumber\\
            &< 18\Big(9+{15m \over 2} \Big)(1+16m) \left( \Big(K-{1\over2}\Big) r_{1}^{2} - (K-1) D_{11} \right),
        \end{align}
        where the second inequality is from \eqref{C7 and C8 inequalities}, and the last from \eqref{C6 inequality}. {Using that $m\le 1/4$ to bound the prefactors on the last line completes the proof.}
\end{proof}


\medskip 


\subsection{Proof of \cref{lem:maindet}}\label{ssec:tech_maindet}
We now have all the necessary ingredients to prove \cref{lem:maindet}, which we do in this subsection.

    We note that \eqref{lambda}, \eqref{lambda} imply
    \begin{align}
        {1 \over 2} - \lambda_{+} < 0 < {1 \over 2} - \lambda_{-}.
    \end{align}
    Let us denote the left-handed side of \eqref{Main Det} by $\mathbb{D}$. From \eqref{first}, \eqref{C1 inequality}, \eqref{C2 inequality}, $\eta_{1}^{-} = D_{12},$ \eqref{C4 inequality}, \eqref{C5 inequality}, we obtain
    \begin{multline}
        \mathbb{D}
        \geq (D_{11}D_{22} - D_{12}^{2})  \Bigl[(1-C_{1}m)(1-C_{2}m)  \frac{r_{1}^{2}}{2}  D_{12}^{2} D_{11}(D_{11}D_{22} - D_{12}^{2})\\
         -(1+C_{4}m)(1+C_{5}m)  (K-1) (r_{1} D_{12} + r_{2} \eta^{-}_{2})^{2} D_{12}^{2} D_{11} \Bigr]\\
        -(1+C_{4}m)(1+C_{5}m)  (K-1)\big((D_{11} - r_{1}^{2})(D_{22} - r_{2}^{2}) - (D_{12} - r_{1}r_{2})^{2}\big) D_{12}^{2} D_{11}^{2} (D_{11} D_{22} - D_{12}^{2}). 
    \end{multline}
   Note that all the terms  on the right-hand side of the inequality have a factor of $D_{12}^2D_{11}$. Pulling out the factor $(D_{11}D_{22} - D_{12}^{2}) D_{12}^{2} D_{11}$ from the right-hand side leads to the expression
   \begin{multline}\label{eq:maindet0}
       (1-C_{1}m)(1-C_{2}m)   \frac{r_{1}^{2}}{2}  (D_{11}D_{22} - D_{12}^{2}) -(1+C_{4}m)(1+C_{5}m)  (K-1) (r_{1} D_{12} + r_{2} \eta^{-}_{2})^{2}  \\
         -(1+C_{4}m)(1+C_{5}m)  (K-1)\big((D_{11} - r_{1}^{2})(D_{22} - r_{2}^{2}) - (D_{12} - r_{1}r_{2})^{2}\big)  D_{11}.
   \end{multline}
   Expanding $(r_1D_{12}+r_2\eta_2^{-})^2$ and $(D_{11}-r_1^2)(D_{22}-r_2^2) - (D_{12}-r_1r_2)^2$, then regrouping terms, we see that
   \begin{multline}\label{eq:maindet1}
       (\ref{eq:maindet0}) = (D_{11} D_{22} - D_{12}^{2}) \Big((1-C_{1}m)(1-C_{2}m)  \frac{r_{1}^{2}}{2} - (1+C_{4}m)(1+C_{5}m)  (K-1) D_{11}\Big)\\
        +(1+C_{4}m)(1+C_{5}m)  (K-1) r_{1}^{2}(D_{11} D_{22} -D_{12}^{2})  \\
        +(1+C_{4}m)(1+C_{5}m)  (K-1)(D_{11} + \eta^{-}_{2})(r_{2}^{2} (D_{11} - \eta_{2}^{-}) - 2r_{1}r_{2}D_{12}).
   \end{multline}
    Combining the bounds \eqref{C3 inequality}, \eqref{C9 inequality} from
    \cref{technical inequalities},
    \begin{multline}\label{eq:maindet1'}
        (K-1)(D_{11} + \eta^{-}_{2})(r_{2}^{2} (D_{11} - \eta_{2}^{-}) - 2r_{1}r_{2}D_{12}) \\
        \ge -C_3C_9m D_{11}(D_{11}D_{22}-D_{12}^2)\Big((K-\frac12) - (K-1)D_{11}\Big).
    \end{multline}
    Applying this bound to the last line of \eqref{eq:maindet1} and combining the first two lines, we obtain
    \begin{multline}\label{eq:maindet2}
    (\ref{eq:maindet0}) \ge  (1+C_{4}m)(1+C_{5}m) \left( \Big(K - {1 \over 2} \Big) r_{1}^{2} - (K-1) D_{11} \right)\\
        -\big( (1+C_{4}m)(1+C_{5}m) -(1-C_{1}m)(1-C_{2}m) \big)  \frac{r_{1}^{2}}{2}\\
        -C_{3}C_{9}m(1+C_{4}m)(1+C_{5}m)   D_{11}  \left( \Big(K-{1 \over 2}\Big) r_{1}^{2} - (K-1) D_{11} \right).
    \end{multline}
    We want all the terms on the right-hand side to have a factor $(K-1/2)r_1^2 - (K-1)D_{11} >0$. To accomplish this, we apply \eqref{C6 inequality} from \cref{technical inequalities} to the $r_1^2$ factor. Upon pulling out the common factor, we arrive at the expression

   \begin{multline}
    (1+C_{4}m)(1+C_{5}m) -C_{3}C_{9}m(1+C_{4}m)(1+C_{5}m)   D_{11}  \\
    -\big( (1+C_{4}m)(1+C_{5}m) -(1-C_{1}m)(1-C_{2}m) \big)  {\frac{C_6(1+4m+8m^2)}{2(1-32m)}} \\
    > (1+C_{4}m)(1+C_{5}m) -C_{3}C_{9}m(1+C_{4}m)(1+C_{5}m) \\
    -\big( (1+C_{4}m)(1+C_{5}m) -(1-C_{1}m)(1-C_{2}m) \big)  {\frac{C_6(1+4m+8m^2)}{2(1-32m)}} ,
   \end{multline}
   where the inequality is since $D_{11}<1$.  Noting that the preceding  expression is positive for  $m \in [0, m_0]$, with $m_0$ defined as in \eqref{eq:m0_def}, completes the proof.

\bibliographystyle{alpha}
\bibliography{references,references_Kyunghoo}

\end{document}